\newcommand{\email}[1]{\hspace*{\stretch{1}}\emph{\texttt{#1}}}
\def\blfootnote{\xdef\@thefnmark{$\star$}\@footnotetext}
\newenvironment{Authors}%
  {\begin{center}\begin{bfseries}}%
  {\end{bfseries}\end{center}}
\newenvironment{Addresses}%
  {\begin{flushleft}\begin{itshape}}%
  {\end{itshape}\end{flushleft}}
\newtheorem{theorem}{Theorem}[section]
\newtheorem{theorem1}{Theorem}[section]
\newtheorem{theorem2}{Theorem}[section]
\newtheorem{proposition}[theorem]{Proposition}
\newtheorem{lemma}[theorem1]{Lemma}
\newtheorem{remark}[theorem2]{Remark}
\newtheorem{corollary}[theorem]{Corollary}
  \newcommand{\vertiii}[1]{{\left\vert\kern-0.25ex\left\vert\kern-0.25ex\left\vert #1 
    \right\vert\kern-0.25ex\right\vert\kern-0.25ex\right\vert}}
\begin{document}

%\begin{frontmatter}
\thispagestyle{plain}

\title{PBDW method  for state estimation:  error analysis for noisy data   and nonlinear formulation}
 \date{}
 
 \maketitle
\vspace{-50pt} 
 
\begin{Authors}
Helin Gong$^1$,
{Yvon Maday}$^{2,3}$,
Olga Mula$^{4}$, 
Tommaso Taddei$^{5}$
\end{Authors}

\begin{Addresses}
$^1$   Science and Technology on Reactor System Design Technology Laboratory, Nuclear Power Institute of China; 610041, Chengdu, China.
\email{gonghelin06@qq.com} \\ % note the use of \speaker
$^{2}$ 
Sorbonne Universit{\'e}, Universit{\'e}  Paris-Diderot SPC, CNRS, Laboratoire Jacques-Louis Lions (LJLL);  75005, Paris, France. 
\email{maday@ann.jussieu.fr}  \\
$^{3}$ 
Institut Universitaire de France; 75005, Paris, France. \\
$^{4}$ 
Universit{\'e} Paris-Dauphine, PSL Research University, CNRS, UMR 7534, CEREMADE;  75016, Paris, France. ´
\email{mula@ceremade.dauphine.fr}\\
$^5$
IMB, UMR 5251, Univ. Bordeaux;  33400, Talence, France.
Inria Bordeaux Sud-Ouest, Team MEMPHIS;  33400, Talence, France, \email{tommaso.taddei@inria.fr} \\
\end{Addresses}

 \begin{abstract}
We present an error analysis and further numerical investigations of the Parameterized-Background Data-Weak (PBDW) formulation to variational Data Assimilation (state estimation),   proposed in [Y Maday, AT Patera, JD Penn, M Yano, Int J Numer Meth Eng, 102(5), 933-965].
The PBDW algorithm  is a state estimation method involving reduced models. It aims at approximating an unknown function $u^{\rm true}$ living in a high-dimensional Hilbert space from $M$ measurement observations given in the form $y_m = \ell_m(u^{\rm true}),\, m=1,\dots,M$, where $\ell_m$ are linear functionals. The method approximates $u^{\rm true}$ with $\hat{u} = \hat{z} + \hat{\eta}$. The \emph{background} $\hat{z}$  belongs to an $N$-dimensional linear space $\mathcal{Z}_N$ built from reduced modelling of a parameterized mathematical model, and the \emph{update} $\hat{\eta}$ belongs to the space $\mathcal{U}_M$ spanned by the Riesz representers of $(\ell_1,\dots, \ell_M)$.  When the measurements are noisy 
{--- i.e., $y_m = \ell_m(u^{\rm true})+\epsilon_m$ with $\epsilon_m$ being a noise term --- }
the classical PBDW formulation  is not robust in the sense that, if $N$ increases, the reconstruction accuracy degrades. In this paper, we propose to address this issue with an extension of the classical formulation, 
{which consists in}  searching for the background $\hat{z}$ either on the whole $\mathcal{Z}_N$ in the noise-free case, or on a well-chosen subset $\mathcal{K}_N \subset \mathcal{Z}_N$ in presence of noise. The restriction to $\mathcal{K}_N$ makes the reconstruction be nonlinear and is the key to make the algorithm significantly more robust against noise. We   {further}
present an \emph{a priori} error and stability analysis, and we illustrate the efficiency of the approach on several numerical examples.
\end{abstract}

% REQUIRED
\emph{Keywords:} 
 variational data assimilation; 
parameterized partial differential equations; 
 model order reduction.

\section{Introduction}
\label{sec:introduction}
Let $\mathcal{U}$ be a Hilbert space defined over a domain $\Omega \subset \mathbb{R}^d$ and equipped with inner product $(\cdot, \cdot)$ and induced norm $\| \cdot \| = \sqrt{(\cdot, \cdot)}$. In this paper, we consider the following state estimation problem: we want to recover an unknown function $u^{\rm true}\in \mathcal{U}$ that represents the state of a physical system of interest from $M$ measurements given in the form
$$
y_m = \ell_m(u^{\rm true}) + \epsilon_m \quad  {m=1,\dots,M},
$$
where $\ell_1\ldots,\ell_M$ are $M$ independent linear functionals over $\mathcal{U}$ and $\epsilon_1,\ldots,\epsilon_M$ reflect the experimental noise. In the following, we gather in the vector
$$
\mathbf{y} = [y_1,\ldots,y_M]^T \in \mathbb{R}^M
$$
the set of measurement data.

{
Several authors have proposed to exploit}
 Bayesian approaches \cite{lorenc1986analysis,Stuart2010, DS2017} that consist in adding certain prior assumptions and then searching through the most plausible solution through sampling strategies of the posterior density. Since this is very costly in a high-dimensional framework, approaches involving dimensionality reduction techniques have become a very active research field in recent years. Our focus lies on strategies involving reduced modelling of parameterized PDEs for which a number of different approaches have been proposed in recent years, see  \cite{maday2013generalized,maday2015generalized,maday2015parameterized,KGV2018, AGV2019}. Note, however, that other compression approaches are possible and, in particular, we cite the works of \cite{AHP2013, berger2017sampling} in the field of signal processing and compressive sensing, which share similarities with the main ideas propagated in the reduced modelling approach as well.

Our starting point is the so-called Parameterized-Background Data-Weak method (PBDW) that was originally introduced in \cite{maday2015parameterized}. The method has been further developed  and analyzed in several works. We cite \cite{binev2017data,CDDFMN2019} for relevant works on the theoretical front, \cite{binev2018greedy} for works on sensor placement. The methodology has been applied to non-trivial applications in \cite{maday2015parameterized, hammond:hal-01698089, GGLM2019} and an analysis on how the method can be used as a vehicle to find optimal sensor locations can be found in \cite{binev2018greedy}. Our paper is devoted to the topic of the noise in measurements for which previous works are \cite{maday2015pbdw, taddei2017model,taddei2017adaptive}. 
We outline our contribution on this topic in what follows.

The {  PBDW} method  {exploits} the knowledge of a parameterized best-knowledge (bk)  model that  describes the physical system,  {to improve performance}. We denote by $u^{\rm bk}(\mu) \in \mathcal{U}$, the solution to the parameterized model for the parameter value $\mu \in \mathcal{P}^{\rm bk}$, 
$$
G^{\rm bk,\mu}(u^{\rm bk}(\mu))=0.
$$
Here, $G^{\rm bk,\mu}(\cdot)$ denotes the parameterized bk model associated with the system, and $\mathcal{P}^{\rm bk} \subset \mathbb{R}^P$ is a compact set that reflects the  {lack of knowledge} in the value of the parameters of the model.
We further define the bk manifold
$$
\mathcal{M}^{\rm bk} = \{ u^{\rm bk}(\mu): \, \mu \in \mathcal{P}^{\rm bk} \},
$$
which collects the solution to the bk model for all values of the parameter.
 {
Note that here, for simplicity of exposition, the model is defined over $\Omega$: in 
\cite{taddei2018localization}, the authors  considered the case in which the model is defined over a domain $\Omega^{\rm bk}$ that strictly contains the domain of interest $\Omega$.
}
We here intend, but we do not assume, that $u^{\rm true}$ is close to the bk manifold: there exists $\mu^{\rm true} \in \mathcal{P}^{\rm bk}$ such that
$\|  u^{\rm true}  - u^{\rm bk}(\mu^{\rm true})  \|$ is small.
In our state estimation problem, we are given the vector $\mathbf{y}\in \mathbb{R}^M$ of measurement data but the value of $\mu^{\rm true}$ is unknown so we cannot simply run a forward computation to approximate $u^{\rm true}$ with $u^{\rm bk}(\mu^{\rm true})$. That is why we refer to the   lack of knowledge of the value of $\mu^{\rm true}$ as to   \emph{anticipated or parametric ignorance}.
 {On the other hand, we refer to $\inf_{\mu \in \mathcal{P}^{\rm bk}}  \|  u^{\rm true}  - u^{\rm bk}(\mu) \|$ as to  \emph{unanticipated or nonparametric model error}.}

The PBDW  method  seeks an approximation
$$
\hat{u} = \hat{z} + \hat{\eta}
$$
employing projection by data. 
For perfect measurements,  that is $\epsilon_m  = 0,\ m=1,\dots,M$,
the estimate $\hat{u}$ is  built by searching $\hat{\eta}$ of minimum norm subject to the observation constraints $\ell_m (\hat{u}) = y_m$ for $m=1,\ldots,M$. In presence of noise, PBDW  {can be formulated} as  {a} Tikhonov regularization of the perfect-measurement statement  {that} depends on an hyper-parameter $\xi >0$ {which} should be tuned using out-of-sample data. We refer to the above mentioned literature
(see in particular \cite{maday2015parameterized,taddei2017model})
% \todo{[cite the specific papers here because the list above is a bit long]} 
for a detailed discussion of the connections between PBDW and other existing state estimation techniques.

The first contribution to $\hat{u}$, is the \emph{deduced background estimate},
\begin{equation}
\label{eq:background_z}
\hat{z} = \sum_{n=1}^N   \hat{z}_n \zeta_n \in \mathcal{K}_N
=
\left\{
\sum_{n=1}^N \, z_n \zeta_n:
\; \;
\mathbf{z} = [z_1,\ldots,z_N]^T \in \Phi_N
\right\}
 \subset \mathcal{Z}_N: = {\rm span} \{ \zeta_n \}_{n=1}^N,
\end{equation}
where $\mathcal{Z}_N$ is an $N$-dimensional linear space spanned by the basis $\{ \zeta_n \}_{n=1}^N$, and $\mathcal{K}_N \subseteq \mathcal{Z}_N$ is a subset of $\mathcal{Z}_N$.   {The space} $\mathcal{Z}_N$ is built 
 {based on}  the bk manifold $\mathcal{M}^{\rm bk}$ and
summarizes two approximations:
  \begin{itemize}
  \item[(i)] the approximation coming from the model, which suffers from a bias   (\emph{unanticipated model error}), 
  \item[(ii)] the approximation of the elements of $\mathcal{M}^{\rm bk}$ due to the finite dimension $N$ of $\mathcal{Z}_N$.
  \end{itemize}
  Note that, while the second approximation can be systematically improved by increasing $N$, the first one is incompressible and inherent to the choice of the model.
One of the novelties with respect to previous works on noise is that we restrict the search of $\hat z$ to a well-chosen subset $\mathcal{K}_N$ of $\mathcal{Z}_N$.
The information that is encoded in $\mathcal{K}_N$ reflects some ``learning'' acquisition on the behavior of the coefficients $\mathbf{\hat{z}}=[\hat{z}_1,\dots, \hat{z}_N]^T$   of the solutions to the best-knowledge model when the parameter varies. The relevance of this set is a more complete formalization of the decrease of the Kolmogorov thickness and of course depends on the proper choice of the reduced basis $ \{ \zeta_n \}_{n=1}^N$. We will see further how this can be taken into account.
As shown later in the paper, the state estimate $\hat{u}$ is a linear function of the measurements $\mathbf{y}$ if and only if  $\Phi_N = \mathbb{R}^N$ (i.e., $\mathcal{K}_N = \mathcal{Z}_N$): for this reason, we refer to the case $\Phi_N = \mathbb{R}^N$ as linear PBDW, and to the case $\Phi_N \subsetneq \mathbb{R}^N$ 
(i.e., $\mathcal{K}_N \subsetneq \mathcal{Z}_N$)
as nonlinear PBDW. 

The second term in $\hat{u}$,  $\hat{\eta} \in \mathcal{U}_M$ is the \emph{update estimate}:
the linear $M$-dimensional space $\mathcal{U}_M$ is the span of Riesz representers $q_1,\ldots,q_M \in \mathcal{U}$
 of the $M$ observation functionals $\{ \ell_m \}_{m=1}^M$,
$$
\mathcal{U}_M \coloneqq {\rm span} \{  q_m \}_{m=1}^M,
\qquad
(q_m, v) = \ell_m(v) \quad
\forall \, v \in \mathcal{U}.
$$
The background $\hat{z}$  addresses  
 {the lack of knowledge in}
the value of the model parameters,
while the update $\hat{\eta}$ accomodates the non-parametric model error.

The contributions of the present work are twofold.
\begin{itemize}
\item[(i)]
We present a complete \emph{a priori} error analysis of linear PBDW, and we present a stability analysis for the nonlinear case.
More in detail, we 
present an error analysis for general linear recovery algorithms, which  relies on the definition of three computable  constants;
we specialize our analysis to  linear PBDW;
 and we present, once again for  linear PBDW, two optimality results   that motivate our approach. 
Furthermore, for the nonlinear case,  we prove that, if $\Phi_N$ is convex, small perturbations in the measurements lead to small perturbations in the state estimate.  As explained in section \ref{sec:analysis}, in the linear case, our analysis  is based on an extension of the framework presented  in \cite{berger2017sampling} to a broader class of linear recovery algorithms. The extension is necessary since  linear PBDW does not belong to the recovery class of \cite{berger2017sampling}.  For the analysis of the nonlinear case, we use tools  {originally developed} in the inverse problem literature (see, e.g., \cite{engl1996regularization}).
 \item[(ii)]
We  present several numerical results that empirically motivate the introduction of the constraints for the background coefficients $\hat{\mathbf{z}}$ (i.e., $\Phi_N \subsetneq \mathbb{R}^N$).
We consider   the specific case where  $\Phi_N = \bigotimes_{n=1}^N [a_n, b_n]$ and $\{a_n, b_n  \}_n$ are estimated based on the bk manifold. We present numerical investigations of the stability of the formulation as a function of 
(i) the hyper-parameter $\xi$ associated with the regularizer,
(ii) the background dimension $N$, and
(iii) the measurement locations. 
Note that   the idea of introducing box constraints has originally been introduced  in \cite{argaud2017stabilization} to stabilize the Generalized Empirical Interpolation Method in presence of noise (GEIM,  \cite{maday2013generalized}).  In this respect, the present paper can be understood as an extension of the latter methodology to PBDW.
\end{itemize}

%&&&&&&&&&&&&&&&&&&&&&&&&&&&&&&&&&&
The paper is organized as follows.
In section \ref{sec:formulation}, we present the PBDW method:
we discuss the well-posedness of the mathematical formulation, and we present the  actionable algebraic form which is used in the numerical implementation.
In section \ref{sec:analysis}, we present the analysis of the method: we here discuss the error analysis for linear PBDW and the stability bound for the nonlinear case.
To simplify the exposition, in sections \ref{sec:formulation} and \ref{sec:analysis}  we consider real-valued problems; the extension to the complex-valued case is straightforward and is briefly outlined at the end of section \ref{sec:formulation}.
In section  \ref{sec:numerics} we present several numerical results for a two-dimensional and a three-dimensional model problem, and in section \ref{sec:conclusions} we draw some conclusions.

%stability/error analysis for general linear recovery algorithms, and we specialize it to  PBDW; we further present two  results that prove the optimality of PBDW among suitable classes  of linear recovery methods;
%finally we discuss the implications of our analysis to measurement selection.

\section{Formulation}
\label{sec:formulation}
\subsection{PBDW statement}
\label{sec:PBDW_statement}

In view of the presentation of the PBDW formulation, we recall the definition of the experimental measurements
\begin{equation}
\label{eq:measurements_gen}
y_m = \ell_m(u^{\rm true})+\epsilon_m,   \quad  m=1,...,M,
\end{equation}
where $\{ \ell_m \}_{m=1}^M \subset \mathcal{U}'$ and $\{ \epsilon_m \}_{m=1}^M$ are unknown disturbances, and of the parameterized bk mathematical model
\begin{equation}
\label{eq:mathematical_model_bk}
G^{\rm bk, \mu}(u^{\rm bk}(\mu)) = 0, 
\end{equation}
where $\mu$ corresponds to the set of uncertain parameters in the model and belongs to the compact set $\mathcal{P}^{\rm bk} \subset \mathbb{R}^P$. We here assume that $G^{\rm bk, \mu}$ is well-posed for all $\mu \in \mathcal{P}^{\rm bk}$ over a domain $\Omega^{\rm bk}$ that contains $\Omega$; we further assume that the restriction of $u^{\rm bk}(\mu)$ to $\Omega$, 
$u^{\rm bk}(\mu)|_{\Omega}$,  belongs to $\mathcal{U}$.
Then, we introduce the rank-$N$ approximation of $u^{\rm bk}|_{\Omega}$, $u_N^{\rm bk}(\mu) |_{\Omega} = \sum_{n=1}^N z_n^{\rm bk} (\mu) \zeta_n$,
 and we denote by $\Phi_N \subset \mathbb{R}^N$ a suitable 
{bounding box}  of the set 
 $\{ \mathbf{z}^{\rm bk}(\mu): \mu \in \mathcal{P}^{\rm bk} \}$. 

 We can now introduce the PBDW statement: find $\hat{u}_{\xi} = \sum_{n=1}^N \left( \hat{\mathbf{z}}_{\xi} \right)_n \zeta_n + \hat{\eta}_{\xi}$ such that $(\hat{\mathbf{z}}_{\xi}, \hat{\eta}_{\xi}) \in \Phi_N \times { \mathcal{U}}$ minimizes 
\begin{equation}
\label{eq:pbdw}
\min_{(\mathbf{z},\eta) \in \Phi_N \times {\mathcal{U}}}  \, 
\mathcal{J}_{\xi}(\mathbf{z}, \eta) :=
\xi \| \eta \|^2 + \Big\| 
\boldsymbol{\ell}   \left(  \sum_{n=1}^N \, z_n \zeta_n +\eta  \right) - \mathbf{y} \Big\|_2^2,
\end{equation}
with $\boldsymbol{\ell}  = [\ell_1,\ldots,\ell_M]^T: \mathcal{U} \to \mathbb{R}^M$, { and where $\| \cdot \|_2$ is the Euclidean $\ell^2$-norm in $\mathbb{R}^N$}. 
For reasons that will become clear soon, we further introduce the limit formulations:
\begin{equation}
\label{eq:pbdw0}
(\hat{\mathbf{z}}_{0}, \hat{\eta}_{0}) \in \,
{\rm arg} \min_{(\mathbf{z},\eta) \in \Phi_N \times \mathcal{U}}  \, 
  \| \eta \|, \quad
  {\rm subject \, to } \; \;
\boldsymbol{\ell} \left( 
\sum_{n=1}^N \, z_n \zeta_n +\eta 
\right)  = \mathbf{y};
\end{equation}
and
\begin{equation}
\label{eq:pbdw_inf}
 \hat{\mathbf{z}}_{\infty}  \in \,
{\rm arg} \min_{ \mathbf{z}  \in \Phi_N  }  \;
\Big\| 
\boldsymbol{\ell}  \left( 
\sum_{n=1}^N \, z_n \zeta_n  
\right) - \mathbf{y} \Big\|_2^2.
\end{equation}
We anticipate that \eqref{eq:pbdw0} and \eqref{eq:pbdw_inf} can be rigorously linked to \eqref{eq:pbdw}: we address this issue in the next section.

%\todo{
%When $\Phi_N = \mathcal{K}_N$ is a convex set, the formulation of \eqref{eq:pbdw} does not lead to $\hat u_\xi \in \mathcal{K}_N \oplus (\mathcal{U}_M \cap Z_N^\perp)$. We also will not have in general $P_{Z_n} \hat u_\xi \in \mathcal{K}_N$ which is what we want and the reason why we are doing a constraint optimization. Here is my argument to prove that $\hat u_\xi \not\in \mathcal{K}_N \oplus (\mathcal{U}_M \cap Z_N^\perp)$: for any $z\in\mathcal{K}_N$ and $\eta\in\mathcal{U}_M$,
%$$
%J_\xi(z, \eta) = J_\xi ( z+P_{Z_N} \eta, P_{Z_N^\perp} \eta ) + \xi || P_{Z_N} \eta ||^2_2 \geq J_\xi ( z+P_{Z_N} \eta, P_{Z_N^\perp} \eta ).
%$$
%Therefore
%$$
%\min_{(z,\eta)\in \mathcal{K}_N \times U_M} J_\xi(z, \eta)
%=
%\min_{(z,\eta)\in (\mathcal{K}_N + P_{\mathcal{Z}_N} \mathcal{U}_M) \times (\mathcal{U}_M \cap V_N^\perp)} J_\xi(z, \eta)
%$$
%where you see that the search for $z$ does not take place in $\mathcal{K}_N$ as we wish. Therefore $\hat u_\xi \not\in \mathcal{K}_N \times (\mathcal{U}_M \cap V_N^\perp)$. In fact, I think that the formulation that we should start with is
%\begin{equation}
%\label{eq:pbdw2}
%\min_{ u \in \mathcal{K}_N \oplus V_N^\perp \cap \mathcal{U}_M}  \, 
%\xi \| u - P_{V_N} u \|^2 + \Big\| 
%\boldsymbol{\ell}   \left(  u  \right) - \mathbf{y} \Big\|_2^2.
%\end{equation}
%which is the natural extension to the constrained case of the unconstrained approach.
%}

We shall now interpret the PBDW statement introduced above. The first term in \eqref{eq:pbdw} penalizes the distance of the state estimate from the set
$\mathcal{K}_N$ defined in  \eqref{eq:background_z},
which is an approximation of the bk solution manifold $\mathcal{M}^{\rm bk}$; 
the second term penalizes the data misfit; finally, the hyper-parameter $\xi>0$ regulates the relative importance of the background compared to the data. We remark that PBDW can be interpreted as a relaxation of the Partial Spline Model presented in \cite{wahba1990spline}: we refer 
 to \cite[section 2]{taddei2017adaptive} and \cite[section 2]{maday2017adaptive} for a detailed derivation. We further observe that in \eqref{eq:pbdw}
 we consider the $\ell^2$ loss,
$V_M (\cdot) = \| \cdot \|_2^2$, to penalize the data misfit: in presence of \emph{a priori} information concerning the properties of the measurement noise, other  loss functions could also be considered.

Model order reduction techniques for data compression are here employed to generate the \emph{background space} $\mathcal{Z}_N = {\rm span} \{  \zeta_n  \}_{n=1}^N$ from the bk manifold. We   refer to \cite{maday2015parameterized} and to the references therein for a detailed discussion; we further refer to \cite{taddei2018localization} for the construction of local approximation spaces when $\Omega$ is strictly contained in $\Omega^{\rm bk}$.
On the other hand, $\Phi_N \subset \mathbb{R}^N$ is built by exploiting  (estimates of) snapshots of the bk solution manifold for selected values of the parameters $\mu^1,\ldots,\mu^{n_{\rm train}} \in \mathcal{P}^{\rm bk}$. In particular, we here consider two choices for $\Phi_N$:
$\Phi_N = \mathbb{R}^N$  and $\Phi_N =  \bigotimes_{n=1}^N  [a_n, b_n]$. In the former case, it is easy to verify that PBDW reduces to {the original linear algorithm of \cite{maday2015parameterized}}, while for the second case we anticipate that computation of the state estimate requires the solution to a quadratic programming problem with box constraints.
We defer the detailed description of the definition of $\{ a_n, b_n \}_n$ to the numerical examples presented in section \ref{sec:numerics}.

%Some comments are in order.
%\textbf{(i)} The PBDW formulation depends on the hyper-parameter $\xi>0$ associated with the Tikhonov regularizer, and on the choice of the background dimension $N$; we provide further comments and several numerical investigations about the choice of these two parameters in sections \ref{sec:analysis} and \ref{sec:numerics}.
%\textbf{(ii)} 
%
%
%On the other hand, 
%
%: we refer to \cite[section 2]{taddei2017adaptive} and \cite[section 2]{maday2017adaptive} for a detailed derivation. Note that  we here 
%\textbf{(iii)}  The space $\mathcal{Z}_N = {\rm span} \{  \zeta_n  \}_{n=1}^N$ is generated from the bk manifold
%$\mathcal{M}^{\rm bk}$ using  model reduction techniques: we refer to 
%\textbf{(iv)}  
%Finally, 

%&&&&&&&&&&&&&&&&&&&&&&&&&&&&&&&&&&
\subsection{Finite-dimensional formulation and limit cases}
\label{sec:limit_cases}

We introduce  the matrices 
$$
\mathbf{L}  = (\mathbf{L}_{m,n})_{\substack{1\leq m \leq M \\ 1\leq n \leq N}} \in \mathbb{R}^{M,N}, \quad \mathbf{L}_{m,n} =  \ell_m(\zeta_n),
$$
and
$$
\mathbf{K}  = (\mathbf{K}_{m,m'})_{\substack{1\leq m,\, m' \leq M}} \in \mathbb{R}^{M,M}, \quad
\mathbf{K}_{m,m'} = (q_m, q_{m'})
$$
In the remainder of this work, we assume that 
$$
M \geq N .
$$
Given a symmetric positive definite matrix $\mathbf{W} \in \mathbb{R}^{M,M}$, 
we define the weighted norm $\Vert \cdot \Vert_{\mathbf{W}} $, such that for all $\mathbf{y} \in \mathbb{R}^M$ we have $\Vert \mathbf{y} \Vert_{\mathbf{W}} := \sqrt{\mathbf{y}^T \mathbf{W} \mathbf{y}}$, and we denote by 
$\lambda_{\rm min}(\mathbf{W})$ and 
$\lambda_{\rm max}(\mathbf{W})$  the minimum and maximum eigenvalues of $\mathbf{W}$.
Proposition \ref{th:representer_theorem} summarizes key properties of the PBDW formulation stated in the previous section.
The proof is provided in Appendix \ref{sec:proof_long}.

\begin{proposition}
\label{th:representer_theorem}
Let $\ell_1, \ldots, \ell_M \in \mathcal{U}'$ be 
{linear independent.}
Let $\hat{u}_{\xi} = \sum_{n=1}^N$  $\left(\hat{\mathbf{z}}_{\xi} \right)_n$ 
$\zeta_n + \hat{\eta}_{\xi}$ be a solution to  \eqref{eq:pbdw} for $\xi >0$, and let $\hat{u}_{0} = \sum_{n=1}^N  \left(\hat{\mathbf{z}}_{0} \right)_n \zeta_n + \hat{\eta}_0$ be a solution to  \eqref{eq:pbdw0}.
Then, the following hold.
  
\begin{enumerate}
\item
The updates $ \hat{\eta}_{\xi}$ and  $\hat{\eta}_{0}$ belong to the space $\mathcal{U}_M = {\rm span} \{ q_m \}_{m=1}^M$.
\item
The vector of coefficients $\hat{\mathbf{z}}_{\xi}$ associated with the  deduced background   solves the least-squares problem:
\begin{subequations}
\label{eq:two_stage_regularized}
\begin{equation}
\label{eq:two_stage_regularized_a}
\min_{\mathbf{z} \in \Phi_N} 
\,
\|  \mathbf{L} \mathbf{z}  - \mathbf{y} \|_{\mathbf{W}_{\xi} },
\quad
{\rm where}
\; \;
\mathbf{W}_{\xi} := 
\left(
\xi \mathbf{Id} + \mathbf{K}
\right)^{-1},
\end{equation}
where $\mathbf{Id}$ is the identity matrix; 
$\hat{\eta}_{\xi}$ is the unique solution to 
\begin{equation}
\label{eq:two_stage_regularized_b}
\min_{\eta \in \mathcal{U}_M} \,
\xi \| \eta \|^2 
\, + \, 
\big\|   \boldsymbol{\ell}(  \eta   )       - 
\mathbf{y}^{\rm err}( \hat{\mathbf{z}}_{\xi}   )         \big\|_2^2,
\quad
{\rm where} \; \;
\mathbf{y}^{\rm err}(\mathbf{z}   ):=
\mathbf{y}  - \mathbf{L} \mathbf{z}. 
\end{equation}
\end{subequations}
In addition, the solution  $(\hat{\mathbf{z}}_0, \hat{\eta}_0)$ to \eqref{eq:pbdw0} solves
\begin{equation}
\label{eq:two_stage_xi0}
\left\{
\begin{array}{l}
\displaystyle{
\min_{\mathbf{z} \in \Phi_N} \|  \mathbf{L} \mathbf{z}  - \mathbf{y} \|_{\mathbf{K}^{-1} },
}
\\[3mm]
\displaystyle{
\min_{\eta \in  \mathcal{U}_M} 
\,
\|  \eta \|,
\quad
{\rm subject \, to} \;
\boldsymbol{\ell}(\eta) = \mathbf{y}^{\rm err}(     \hat{\mathbf{z}}_0       ).
}
\\
\end{array}
\right.
\end{equation}
\item
Any solution $\hat{\mathbf{z}}_{\xi} $ to \eqref{eq:pbdw} satisfies 
\begin{subequations}
\label{eq:bounds_limits}
\begin{equation}
\label{eq:bounds_limits_a}
\|  \mathbf{L} \hat{\mathbf{z}}_{\xi} - \mathbf{y}   \|_2^2
\, \leq 
\,
\frac{\xi + \lambda_{\rm max}(\mathbf{K})}{\xi + \lambda_{\rm min}(\mathbf{K})} 
\,
\min_{\mathbf{z} \in \Phi_N} \, \|  \mathbf{L} \mathbf{z} - \mathbf{y} \|_2^2,
\end{equation}
and
\begin{equation}
\label{eq:bounds_limits_b}
\|  \mathbf{L} \hat{\mathbf{z}}_{\xi} - \mathbf{y}   \|_{\mathbf{K}^{-1}}^2 \, \leq 
\,
\frac{\lambda_{\rm max}(\mathbf{K})} {\lambda_{\rm min}(\mathbf{K})}
\left(
\frac{\xi + \lambda_{\rm min}(\mathbf{K})}{\xi + \lambda_{\rm max}(\mathbf{K})}
\right)
\,
\min_{\mathbf{z} \in \Phi_N} \, 
\|  \mathbf{L} \mathbf{z} - \mathbf{y} \|_{\mathbf{K}^{-1}}^2.
\end{equation}
\end{subequations}
Furthermore,  the optimal update  $\hat{\eta}_{\xi} $ satisfies
\begin{equation}
\label{eq:bounds_limits_2}
\| \hat{\eta}_{\xi}  \|^2
\leq
\frac{1}{\xi} \, \min_{\mathbf{z} \in \Phi_N} \, \|  \mathbf{L} \mathbf{z} - \mathbf{y} \|_2^2;
\qquad
 \|  \boldsymbol{\ell}(\hat{\eta}_{\xi} ) + \mathbf{L} \hat{\mathbf{z}}_{\xi} - \mathbf{y}        \|_2^2
 \leq
 \xi \, \|  \hat{\eta}_0 \|^2.
\end{equation}
\item
If $\mathbf{L}$ is full rank, any solution to \eqref{eq:pbdw} 
 is bounded  for any choice of $\Phi_N$ and for any $\xi >0$.
\item
If $\Phi_N$ is convex and $\mathbf{L}$ is full rank, then the solution  to \eqref{eq:pbdw} is  unique for any $\xi >0$.
 \end{enumerate}  
 \end{proposition}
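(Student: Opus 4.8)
The plan is to deduce uniqueness from the \emph{strict} convexity of the objective $\mathcal{J}_\xi$ over the convex feasible set. First I would note that $\Phi_N \times \mathcal{U}$ is convex, being the product of the convex set $\Phi_N$ with the vector space $\mathcal{U}$. Since a strictly convex functional has at most one minimizer on a convex set, it suffices to show that $(\mathbf{z},\eta) \mapsto \mathcal{J}_\xi(\mathbf{z},\eta)$ is strictly convex whenever $\xi>0$ and $\mathbf{L}$ is full rank. (Existence of a minimizer, which makes the phrase ``the solution'' meaningful, follows from the coercivity already recorded in part 4 together with closedness of the box-type sets $\Phi_N$ we use; the substance of the claim is the ``at most one'' part.)

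To prove strict convexity I would exhibit $\mathcal{J}_\xi$ as a quadratic functional of $(\mathbf{z},\eta)$. Using $\boldsymbol{\ell}\big(\sum_{n=1}^N z_n \zeta_n\big) = \mathbf{L}\mathbf{z}$, the objective reads $\mathcal{J}_\xi(\mathbf{z},\eta) = \xi\|\eta\|^2 + \|\mathbf{L}\mathbf{z} + \boldsymbol{\ell}(\eta) - \mathbf{y}\|_2^2$, which is the sum of a constant, a linear term, and the homogeneous quadratic form $Q(\delta\mathbf{z},\delta\eta) := \xi\|\delta\eta\|^2 + \|\mathbf{L}\,\delta\mathbf{z} + \boldsymbol{\ell}(\delta\eta)\|_2^2$ evaluated at the increment $(\delta\mathbf{z},\delta\eta)$. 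For any two feasible points $p_1,p_2$ and any $t\in(0,1)$ one then has the exact identity $\mathcal{J}_\xi\big(t p_1 + (1-t)p_2\big) - t\,\mathcal{J}_\xi(p_1) - (1-t)\,\mathcal{J}_\xi(p_2) = -\,t(1-t)\,Q(p_1 - p_2)$, so strict convexity is \emph{literally equivalent} to $Q(\delta\mathbf{z},\delta\eta) > 0$ for every nonzero increment. The entire argument thus reduces to showing that $Q$ is positive definite.

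The key step is a two-case analysis of $Q$. If $\delta\eta \neq 0$, then $\xi\|\delta\eta\|^2 > 0$ because $\xi>0$, and since the second summand is nonnegative we get $Q>0$. If instead $\delta\eta = 0$ while $\delta\mathbf{z} \neq 0$, then $Q(\delta\mathbf{z},0) = \|\mathbf{L}\,\delta\mathbf{z}\|_2^2$, which is strictly positive precisely because $\mathbf{L}$ has full (column) rank — here one uses $M \geq N$, so that full rank means trivial kernel. Hence each hypothesis is used in exactly one branch: $\xi>0$ controls the $\eta$-direction and full rank of $\mathbf{L}$ controls the $\mathbf{z}$-direction. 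The mild subtlety worth flagging is that $\mathcal{J}_\xi$ is \emph{not} strictly convex in $\mathbf{z}$ and in $\eta$ separately in a way that decouples, since the cross-term $\boldsymbol{\ell}(\eta)$ couples the two blocks; the case split is exactly what lets the positivity of a single summand cover each direction without diagonalizing the coupled form. Having established $Q>0$ on nonzero increments, strict convexity of $\mathcal{J}_\xi$ follows, so the minimizer over $\Phi_N \times \mathcal{U}$ is unique, which proves part 5.
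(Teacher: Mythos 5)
Your proof is correct, but it fills in a gap the paper leaves open and does so by a different route than the one the paper gestures at. For part 5 the paper simply writes that the fourth and fifth statements ``follow directly from the algebraic formulation of the PBDW statement, and from well-known results in convex optimization,'' i.e.\ the intended argument runs through the two-stage reduction of part 2: since $\mathbf{W}_{\xi}$ is symmetric positive definite and $\mathbf{L}$ has full column rank, the reduced Hessian $\mathbf{L}^T\mathbf{W}_{\xi}\mathbf{L}$ is positive definite, so the quadratic program for $\hat{\mathbf{z}}_{\xi}$ over the convex set $\Phi_N$ has a unique solution, and $\hat{\eta}_{\xi}$ is then unique by \eqref{eq:two_stage_regularized_b}. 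You instead prove strict convexity of the \emph{joint} functional $\mathcal{J}_{\xi}$ on $\Phi_N\times\mathcal{U}$ directly, via the exact quadratic identity and the two-case positivity analysis of $Q(\delta\mathbf{z},\delta\eta)=\xi\|\delta\eta\|^2+\|\mathbf{L}\,\delta\mathbf{z}+\boldsymbol{\ell}(\delta\eta)\|_2^2$; the case split correctly handles the coupling term, and the identity you invoke is the standard one for a functional that is a symmetric quadratic form plus an affine part, so the equivalence with $Q>0$ on nonzero increments is legitimate. What your route buys is self-containedness: it needs neither the representer reduction of part 1 nor the elimination of $\eta$ via $\mathbf{W}_{\xi}$, and it makes transparent exactly where $\xi>0$ and the full rank of $\mathbf{L}$ (with $M\geq N$) enter; what the paper's route buys is that uniqueness drops out of machinery already built, with the Hessian positivity visible in closed form. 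One minor inaccuracy worth fixing: part 4 records \emph{boundedness of solutions}, not coercivity, so your parenthetical on existence overstates what was ``already recorded''; if you want existence and not only ``at most one,'' either derive joint coercivity yourself (which follows from $Q>0$, since $\|\delta\eta\|\to\infty$ forces $\xi\|\delta\eta\|^2\to\infty$ while bounded $\|\delta\eta\|$ and $\|\delta\mathbf{z}\|\to\infty$ force $\|\mathbf{L}\,\delta\mathbf{z}+\boldsymbol{\ell}(\delta\eta)\|_2\to\infty$ by full rank), or restrict to $\mathcal{U}_M$ via part 1 to reduce to a finite-dimensional problem on a closed convex set; as stated, the existence sentence is the only loose joint in an otherwise sound argument, and it is peripheral since the proposition presupposes that solutions exist.
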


Estimates \eqref{eq:bounds_limits} can be used to prove rigorous links between \eqref{eq:pbdw} and the limit cases 
\eqref{eq:pbdw0} and \eqref{eq:pbdw_inf}: we state the formal result in the following corollary, which is an extension of 
\cite[Proposition 2.9]{taddei2017adaptive}. 
%Proof is straightforward and is omitted.
Motivated by this corollary, with some abuse of notation, 
we extend the PBDW formulation \eqref{eq:pbdw} to $\xi \in [0,\infty]$, with the understanding that
$\xi = 0$ corresponds to \eqref{eq:pbdw0} and $\xi = \infty$ corresponds to \eqref{eq:pbdw_inf}.

\begin{corollary}
\label{th:limit_cases}
Given the sequence $\{ \xi_i \}_{i=1}^{\infty}$ such that $\xi_i >0$, we define the sequence of solutions
$\{ (\hat{\mathbf{z}}_i,  \hat{\eta}_i  ):= ( \hat{\mathbf{z}}_{\xi_i},  \hat{\eta}_{\xi_i} ) \}_{i=1}^{\infty}$ to 
\eqref{eq:pbdw}. 
Then, if $\mathbf{L}$ is full rank with $M \geq N$, 
the following hold:
\textbf{(i)}
 if $\xi_i \to \infty$,  then $\hat{\eta}_i \to 0$ and any  limit point of 
$\{ \hat{\mathbf{z}}_i \}_{i=1}^{\infty}$ is a solution to \eqref{eq:pbdw_inf};
\textbf{(ii)}
if $\xi_i \to 0$, then any limit point of $\{ (\hat{\mathbf{z}}_i,  \hat{\eta}_i  ) \}_{i=1}^{\infty}$
is a solution to \eqref{eq:pbdw0}; and 
\textbf{(iii)}
if $\Phi_N$ is convex, then the  solution map $\xi \mapsto ( \hat{\mathbf{z}}_{\xi},  \hat{\eta}_{\xi} )$ is continuous in $[0,\infty]$.
\end{corollary}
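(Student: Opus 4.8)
The plan is to treat all three statements as consequences of the quantitative estimates collected in Proposition~\ref{th:representer_theorem}, combined with two soft facts: that $\Phi_N$ is closed, and that every update $\hat\eta_\xi$ lives in the \emph{finite-dimensional} space $\mathcal U_M$ (item~1 of the Proposition), so that norm-boundedness of a sequence of iterates immediately yields convergent subsequences with limits in $\Phi_N\times\mathcal U_M$. Throughout I abbreviate $e_\star:=\min_{\mathbf z\in\Phi_N}\|\mathbf L\mathbf z-\mathbf y\|_2$, which is finite since $\Phi_N\neq\emptyset$. The common template for (i) and (ii) is: (a) produce a uniform-in-$\xi$ bound on the iterates so as to extract a convergent subsequence; (b) pass the optimality and feasibility relations to the limit; (c) identify the limit as a minimizer of the corresponding limit problem.

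For (i), the convergence $\hat\eta_i\to 0$ is immediate from the first estimate in \eqref{eq:bounds_limits_2}, namely $\|\hat\eta_i\|^2\le \xi_i^{-1}e_\star^2\to 0$. For the background I would use \eqref{eq:bounds_limits_a}: since the prefactor $\frac{\xi_i+\lambda_{\rm max}(\mathbf K)}{\xi_i+\lambda_{\rm min}(\mathbf K)}\to 1$ as $\xi_i\to\infty$, one gets $\limsup_i\|\mathbf L\hat{\mathbf z}_i-\mathbf y\|_2^2\le e_\star^2$, while $\hat{\mathbf z}_i\in\Phi_N$ forces the reverse inequality $\|\mathbf L\hat{\mathbf z}_i-\mathbf y\|_2^2\ge e_\star^2$. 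The $\{\hat{\mathbf z}_i\}$ are bounded (item~4), so along any convergent subsequence $\hat{\mathbf z}_{i_k}\to\hat{\mathbf z}_\ast\in\Phi_N$ (closedness), and continuity gives $\|\mathbf L\hat{\mathbf z}_\ast-\mathbf y\|_2^2=e_\star^2$; that is, $\hat{\mathbf z}_\ast$ solves \eqref{eq:pbdw_inf}.

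For (ii) the main obstacle is boundedness of $\{\hat\eta_i\}$, because the bound $\|\hat\eta_i\|^2\le\xi_i^{-1}e_\star^2$ now \emph{blows up}. The fix is to compare against the $\xi=0$ solution $(\hat{\mathbf z}_0,\hat\eta_0)$, which is feasible for \eqref{eq:pbdw0} and hence has zero data misfit; optimality of $(\hat{\mathbf z}_i,\hat\eta_i)$ for $\mathcal J_{\xi_i}$ then gives $\xi_i\|\hat\eta_i\|^2+\|\boldsymbol\ell(\hat\eta_i)+\mathbf L\hat{\mathbf z}_i-\mathbf y\|_2^2\le\mathcal J_{\xi_i}(\hat{\mathbf z}_0,\hat\eta_0)=\xi_i\|\hat\eta_0\|^2$. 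Dividing by $\xi_i$ yields the uniform bound $\|\hat\eta_i\|\le\|\hat\eta_0\|$, while dropping the first term yields $\|\boldsymbol\ell(\hat\eta_i)+\mathbf L\hat{\mathbf z}_i-\mathbf y\|_2^2\le\xi_i\|\hat\eta_0\|^2\to 0$ (the second estimate in \eqref{eq:bounds_limits_2}). With $\{\hat{\mathbf z}_i\}$ bounded (item~4) and $\{\hat\eta_i\}$ bounded in the finite-dimensional $\mathcal U_M$, extract $(\hat{\mathbf z}_{i_k},\hat\eta_{i_k})\to(\hat{\mathbf z}_\ast,\hat\eta_\ast)\in\Phi_N\times\mathcal U_M$. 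Passing the vanishing-residual relation to the limit shows $(\hat{\mathbf z}_\ast,\hat\eta_\ast)$ is \emph{feasible} for \eqref{eq:pbdw0}, and passing $\|\hat\eta_i\|\le\|\hat\eta_0\|$ to the limit gives $\|\hat\eta_\ast\|\le\|\hat\eta_0\|$; since $\hat\eta_0$ is the minimal-norm feasible update, this is an equality and $(\hat{\mathbf z}_\ast,\hat\eta_\ast)$ solves \eqref{eq:pbdw0}.

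For (iii), convexity of $\Phi_N$ is what upgrades ``limit points are solutions'' into genuine continuity. First I would record that the minimizer is \emph{unique} at every $\xi\in[0,\infty]$: for $\xi\in(0,\infty)$ this is item~5, while at $\xi=\infty$ and $\xi=0$ the reduced background problems \eqref{eq:pbdw_inf} and the first line of \eqref{eq:two_stage_xi0} minimize the strictly convex maps $\mathbf z\mapsto\|\mathbf L\mathbf z-\mathbf y\|_2^2$ and $\mathbf z\mapsto\|\mathbf L\mathbf z-\mathbf y\|_{\mathbf K^{-1}}^2$ (strictly convex because $\mathbf L$ is full rank) over the convex set $\Phi_N$, so $\hat{\mathbf z}$ is unique and $\hat\eta$ is then determined. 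Next, the estimates above give uniform-in-$\xi$ bounds: $\|\hat\eta_\xi\|\le\|\hat\eta_0\|$ for all $\xi>0$, and \eqref{eq:bounds_limits_a} together with injectivity of $\mathbf L$ bounds $\hat{\mathbf z}_\xi$ uniformly. Given $\xi_i\to\xi_\ast\in[0,\infty]$, the iterates are therefore bounded and every convergent subsequence has a limit that solves the $\xi_\ast$-problem: at the endpoints this is exactly (i)--(ii), and at an interior $\xi_\ast$ it follows by passing the optimality inequality $\mathcal J_{\xi_i}(\hat{\mathbf z}_i,\hat\eta_i)\le\mathcal J_{\xi_i}(\mathbf z,\eta)$ to the limit and using that $\xi\mapsto\mathcal J_\xi(\mathbf z,\eta)$ is continuous for fixed $(\mathbf z,\eta)$. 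By uniqueness every subsequential limit equals $(\hat{\mathbf z}_{\xi_\ast},\hat\eta_{\xi_\ast})$, so the whole bounded sequence converges to it, which is the claimed continuity. The one delicate point remains the endpoint $\xi_\ast=0$, where boundedness of the update rests on the comparison trick $\|\hat\eta_\xi\|\le\|\hat\eta_0\|$ rather than on \eqref{eq:bounds_limits_2}.
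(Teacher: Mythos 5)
Your proof is correct and follows essentially the same route as the paper: part (i) reproduces the paper's argument (uniform bounds from \eqref{eq:bounds_limits_a} and \eqref{eq:bounds_limits_2}, Bolzano--Weierstrass, passing to the limit in \eqref{eq:bounds_limits_a}), while for (ii) and (iii) --- which the paper dismisses with ``follow similar ideas'' --- your comparison against $(\hat{\mathbf{z}}_0,\hat{\eta}_0)$ is exactly the mechanism behind the second estimate in \eqref{eq:bounds_limits_2}, and the uniqueness-plus-subsequence upgrade to continuity is the intended completion. You in fact supply details the paper omits (closedness of $\Phi_N$, strict convexity of the endpoint problems, the uniform bound $\|\hat{\eta}_\xi\|\leq\|\hat{\eta}_0\|$ near $\xi=0$), all of which are sound.
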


\begin{proof}
We here prove the first statement. The proofs of the second and of the third statements follow similar ideas.
Since $\mathbf{L}$ is full rank and $\mathbf{K}$ is invertible, exploiting \eqref{eq:bounds_limits_a} and \eqref{eq:bounds_limits_2}, there exists $C < \infty$ such that
$$
\sup_i \, \| \hat{\mathbf{z}}_i  \|_2 , \; \; 
\sup_i \, \xi_i \, \| \hat{\eta}_i  \|_2 \leq C.
$$
This implies that $\hat{\eta}_i \to 0$, while, applying  Bolzano-Weierstrass theorem, we find that $\{ \hat{\mathbf{z}}_i  \}_i$ admits convergent subsequences. Let $\hat{\mathbf{z}}^{\star}$ be a limit of point of  $\{ \hat{\mathbf{z}}_i  \}_i$; then  by taking the limit in \eqref{eq:bounds_limits_a}, we obtain
$$
\|   \mathbf{L}\hat{\mathbf{z}}^{\star} - \mathbf{y} \|_2^2 
\leq \,
\limsup_{i \to \infty} \, \|   \mathbf{L}\hat{\mathbf{z}}_i - \mathbf{y} \|_2^2  \; 
\leq
\;
\min_{\mathbf{z} \in \Phi_N} \, \| \mathbf{L} \mathbf{z}  - \mathbf{y}  \|_2^2,
$$
which proves the first statement.
\end{proof}

For non-convex domains $\Phi_N$, the solution to \eqref{eq:pbdw} is not in general unique: for this reason, we here restrict our attention to the case in which $\Phi_N$ is convex.
We thus specialize \eqref{eq:two_stage_regularized} to the two choices of $\Phi_N$ considered in this work.
For  $\Phi_N = \mathbb{R}^N$, the vector $\hat{\mathbf{z}}_{\xi}$ solves the linear problem: 
\begin{subequations}
\label{eq:algebraic_pbdw_std}
\begin{equation}
\label{eq:algebraic_pbdw_std_a}
\mathbf{L}^T \mathbf{W}_{\xi} \mathbf{L} \, 
\hat{\mathbf{z}}_{\xi} \, =  \,
\mathbf{L}^T \mathbf{W}_{\xi} \mathbf{y},
\end{equation}
while the vector $\hat{\boldsymbol{\eta}}_{\xi}$ associated with 
the update $\hat{\eta}_{\xi}$, 
$\hat{\eta}_{\xi} = \sum_{m=1}^M (\hat{\boldsymbol{\eta}}_{\xi})_m q_m$, satisfies
\begin{equation}
\label{eq:algebraic_pbdw_std_b}
\left( \mathbf{K} + \xi \mathbf{Id} \right) \, \hat{\boldsymbol{\eta}}_{\xi} = \mathbf{y}  - \mathbf{L} \, \hat{\mathbf{z}}_{\xi}.
\end{equation}
Note that in this case there exists a linear map between the data $\mathbf{y}$ and the solution $\hat{u}_{\xi}$. 
We further observe that the update $\hat{\eta}_{\xi}$ belongs to $\mathcal{Z}_N^{\perp} \cap \mathcal{U}_M$ (see \cite[Proposition 2.2.2]{taddei2017model}), where $\mathcal{Z}_N^{\perp}$ is the orthogonal complement of $\mathcal{Z}_N$. 
\end{subequations}
On the other hand, for $\Phi_N =  \bigotimes_{n=1}^N  [a_n, b_n]$,  $\hat{\mathbf{z}}_{\xi}$ solves the quadratic programming problem:
\begin{equation}
\label{eq:algebraic_pbdw_constrained}
\min_{\mathbf{z} \in \mathbb{R}^N} \,
\frac{1}{2} \mathbf{z}^T \left( \mathbf{L}^T \mathbf{W}_{\xi} \mathbf{L} \right) \mathbf{z}
\, - \,
\mathbf{z}^T \mathbf{L}^T \mathbf{W}_{\xi} \mathbf{y},
\quad
{\rm subject \, to} \;
a_n \leq z_n \leq b_n,
\; \; \;
n=1,\ldots,N;
\end{equation}
which can be  {easily solved with classical optimization methods.} The update $\hat{\eta}_{\xi}$ can be computed using \eqref{eq:two_stage_regularized_b}  as for the linear case.
Note that in this case the map between data and state estimate is nonlinear, 
and  the update $\hat{\eta}_{\xi}$ does not in general belong\footnote{We found empirically that explicitly adding the constraint $\eta \in \mathcal{Z}_N^{\perp}$ does not improve reconstruction performance, and can even deteriorate the accuracy of the PBDW estimate in presence of substantial model bias and moderate experimental noise.} to $\mathcal{Z}_N^{\perp} \cap \mathcal{U}_M$.
As anticipated in the introduction, we refer to  \eqref{eq:pbdw} with $\Phi_N = \mathbb{R}^N$ as to linear   PBDW, and we refer to \eqref{eq:pbdw} with $\Phi_N =  \bigotimes_{n=1}^N  [a_n, b_n]$ as to nonlinear   PBDW.

\begin{remark}
\label{remark_complex}
%\textbf{Complex-valued problems.}
We can easily extend the previous developments to complex-valued problems.
If $\mathcal{U}$ is a space of complex-valued functions, the measurements $y_1,\ldots,y_M \in \mathbb{C}$ {  and}
$\Phi_N = \mathbb{C}^N$,  we can find the counterpart of \eqref{eq:algebraic_pbdw_std}:
\begin{equation}
\label{eq:algebraic_PBDW_complex_std}
\mathbf{L}^H \mathbf{W}_{\xi} \mathbf{L} \,  \hat{\mathbf{z}}_{\xi} \, =  \,
\mathbf{L}^H \mathbf{W}_{\xi} \mathbf{y},
\qquad
\left( \mathbf{K} + \xi \mathbf{Id} \right) \, \hat{\boldsymbol{\eta}}_{\xi} = \mathbf{y}  - \mathbf{L} \, \hat{\mathbf{z}}_{\xi};
\end{equation}
where $(\cdot )^H$ denotes the Hermitian conjugate. For the nonlinear case, if we set
$$
\Phi_N = \left\{  
\mathbf{z}^{\rm re} + {\rm i} \mathbf{z}^{\rm im}
\, : \,   
\mathbf{z}^{\rm re} \, \in  \bigotimes_{n=1}^N  [a_n, b_n],
\; \;
\mathbf{z}^{\rm im} \, \in  \bigotimes_{n=1}^N  [a_{n+N}, b_{n+N}],
\right \},
$$
for  some $ \{ a_n, b_n \}_{n=1}^{2N}$,
it is easy to obtain that $\widehat{\mathbf{z}}_{\xi}^{\star} = [ {\rm Re}\{  \widehat{\mathbf{z}}_{\xi} \}              ,
 {\rm Im}\{  \widehat{\mathbf{z}}_{\xi} \}             
 ] \in \mathbb{R}^{2N}$ solves
\begin{subequations}
\label{eq:algebraic_PBDW_complex_complex}
\begin{equation}
\min_{\mathbf{z}  \in \mathbb{R}^{2N}} \,
\frac{1}{2} \mathbf{z}^T \,  \mathbf{H} \,  \mathbf{z} 
\, - \,
 \mathbf{f}^T \mathbf{z} \quad 
{\rm subject \, to} \;
a_n \leq z_n \leq b_n,
\; \; \;
n=1,\ldots, 2 N;
 \end{equation}
 where
\begin{equation}
\mathbf{H} = 
\left[
\begin{array}{cc}
{\rm Re} \{  \mathbf{L}^H \mathbf{W}_{\xi} \mathbf{L}   \} &
-{\rm Im} \{  \mathbf{L}^H \mathbf{W}_{\xi} \mathbf{L}   \} \\[1mm]
{\rm Im} \{  \mathbf{L}^H \mathbf{W}_{\xi} \mathbf{L}   \}  &
{\rm Re} \{  \mathbf{L}^H \mathbf{W}_{\xi} \mathbf{L}   \}  \\
\end{array}
\right],
\quad
\mathbf{f} =
\left[
\begin{array}{c}
{\rm Re} \{  \mathbf{L}^H \mathbf{W}_{\xi} \mathbf{y}   \} \\
{\rm Im} \{  \mathbf{L}^H \mathbf{W}_{\xi} \mathbf{y}    \}  \\
\end{array}
\right].
 \end{equation}
 \end{subequations}
\end{remark}

%
%
%&&&&&&&&&&&&&&&&&&&&&&&&&&&&&&&&&&
%
%
%&&&&&&&&&&&&&&&&&&&&&&&&&&&&&&&&&&
%\subsection{Choice of $\Phi_N$}
%\label{sec:CON}
%
%
%We point out the limit of linear algorithms even with quasi-optimal observation functionals, the nonlinear algorithm by adding further constraint $\{ \text{dist}(\mathbf{z},  \mathcal{Z}_n) \le \varepsilon_n, ~n = 1,\cdots,N \}$ is introduced for further improvement of the stability performance. Then we will give  the algebraic form for some spacial cases.
%
%%We add further constraints $\{ \text{dist}(\mathbf{z},  \mathcal{Z}_n) \le \varepsilon_n, ~n = 1,\cdots,N \}$ on APBDW formulation, and give the algebraic form for some spacial cases.

\section{Analysis}
\label{sec:analysis}
We present below a mathematical analysis of the PBDW formulation for noisy measurements. 
In section \ref{sec:linear_PBDW}, we extend the analysis presented in \cite{berger2017sampling} to general linear recovery algorithms, and we apply it to PBDW.
In section \ref{sec:nonlinear_PBDW}, we prove that the solution to nonlinear PBDW  depends   
continuously on data. To conclude, in section \ref{sec:DOE}, we briefly discuss how the analysis presented in this section could be exploited to choose measurement locations.

\subsection{Analysis for linear PBDW: \emph{a priori} error bounds and optimality}
\label{sec:linear_PBDW}
\subsubsection{A general result for linear recovery algorithms}
\label{sec:stability_measures}

We first introduce some notation.
Given the  closed linear subspace $\mathcal{Q} \subset \mathcal{U}$, we denote by $\Pi_{\mathcal{Q}} : \mathcal{U} \to \mathcal{Q}$ the orthogonal projection operator onto $\mathcal{Q}$, and we denote by $\mathcal{Q}^{\perp}$ its orthogonal complement. 
We also denote by $\mathcal{L}(\mathcal{X},\mathcal{Y})$ the space of linear bounded operators  from the Hilbert space $\mathcal{X}$ to the Hilbert space $\mathcal{Y}$, equipped with the norm
$\| A  \|_{\mathcal{L}(\mathcal{X}, \mathcal{Y})} = \sup_{ v \in \mathcal{X} }  \frac{\| A(v)  \|_{\mathcal{Y}}}{\|  v \|_{\mathcal{X}}}$.
Given the algorithm $A: \mathbb{R}^M \to \mathcal{U}$,
we define the image of $A$, ${\rm Im}(A):= \{ A(\mathbf{y}): \; \mathbf{y} \in \mathbb{R}^M \}$;
we denote by $Q$ the dimension of the space ${\rm Im}(A)$, $Q \leq M$, and we denote by $\{ \psi_q \}_{q=1}^Q$ an orthonormal basis of  ${\rm Im}(A)$. We further denote by $A_{\boldsymbol{\ell}}: \mathcal{U} \to \mathcal{U}$ the $\mathcal{L}(\mathcal{U}, \mathcal{U})$ operator such that $A_{\boldsymbol{\ell}}(u) = A(\boldsymbol{\ell}(u))$. 

We can now introduce the stability constants associated with $A$:
\begin{equation}
\label{eq:Lebesgue_constant_2norm}
\Lambda_{2}(A):=
\|  A  \|_{\mathcal{L}(\mathbb{R}^M ,  \mathcal{U})}  
\, = \,
\sup_{\mathbf{y} \in \mathbb{R}^M} \, \frac{\|  A(\mathbf{y} ) \|}{\|\mathbf{y} \|_2}~,
\end{equation}
and
\begin{equation}
\label{eq:Lebesgue_constant_Unorm}
\Lambda_{\mathcal{U}}(A) :=
\|  Id  -  A_{\boldsymbol{\ell}}  \|_{\mathcal{L}(\mathcal{U},  \mathcal{U})}  
\, = \,
\sup_{u \in \mathcal{U}} \, \frac{\| u  -   A_{\boldsymbol{\ell}}(u)) \|}{\| u \|}~.
\end{equation}
We further define the biasing constant
\begin{equation}
\label{eq:Lebesgue_constant_bias}
\Lambda_{\mathcal{U}}^{\rm bias}(A):=
\|  Id  -  A_{\boldsymbol{\ell}}|_{{\rm Im}(A)}  \|_{\mathcal{L}({\rm Im}(A),  \mathcal{U})}  
\, = \,
\sup_{u \in {\rm Im}(A)} \, \frac{\| u  -   A_{\boldsymbol{\ell}}(u) \|}{\| u \|}~.
\end{equation}
Note that $\Lambda_{\mathcal{U}}^{\rm bias}(A) = 0$ if and only if
$A_{\boldsymbol{\ell}}(u) = u$ for all $u \in {\rm Im}(A)$. Next Lemma summarizes important properties of the constants introduced above.
We remark that   if $\Lambda_{\mathcal{U}}^{\rm bias}(A) \neq  0$, exact computation of 
$\Lambda_{\mathcal{U}} (A) $ is in general not possible. In the numerical experiments, we consider the approximation 
$\Lambda_{\mathcal{U}}(A) \approx \sup_{u \in \mathcal{U}_{\mathcal{N}}} \, \frac{\| u  -   A_{\boldsymbol{\ell}}(u)) \|}{\| u \|}$,
where $ \mathcal{U}_{\mathcal{N}}$ is the $\mathcal{N}$-dimensional approximation of the   space $ \mathcal{U}$, based  on a high-fidelity 
(spectral, Finite Element,...) discretization, and then we resort to an Arnoldi iterative method to (approximately) solve the corresponding eigenvalue problem.

\begin{lemma}
\label{th:properties_lebesgue_constants}
Given the linear algorithm $A: \mathbb{R}^M \to \mathcal{U}$, the following hold.
\begin{enumerate}
\item
The constants $\Lambda_2(A)$  and $\Lambda_{\mathcal{U}}^{\rm bias}(A)$ can be computed as follows:
\begin{equation}
\label{eq:algebraic_lebesgue}
\Lambda_{2}(A) = s_{\rm max}(\mathbf{A}), \quad
 \Lambda_{\mathcal{U}}^{\rm bias}(A) = s_{\rm max}(\mathbf{Id}  - {\mathbf{A}}_{\boldsymbol{\ell}} ),
\end{equation}
where $s_{\rm max}(\mathbf{W})$ denotes the maximum singular value of $\mathbf{W}$.
Here,
$\mathbf{A} \in \mathbb{R}^{Q,M}$ and ${\mathbf{A}}_{\boldsymbol{\ell}} \in \mathbb{R}^{Q,Q}$
are such that 
$\mathbf{A}_{q,m} = (A(\mathbf{e}_m),  \psi_q)$ and 
$({\mathbf{A}}_{\boldsymbol{\ell}}   )_{q,q'} = (A_{\boldsymbol{\ell}}(\psi_{q'}),  \psi_q)$, 
where $\{ \mathbf{e}_m \}_m$ is the canonical basis in $\mathbb{R}^M$.
\item
Suppose that $\Lambda_{\mathcal{U}}^{\rm bias}(A) = 0$. Then, $A_{\boldsymbol{\ell}}$ is idempotent (i.e.,
 $A_{\boldsymbol{\ell}}^2 = A_{\boldsymbol{\ell}}$), and 
$\Lambda_{\mathcal{U}}(A) = \|  A_{\boldsymbol{\ell}} \|_{\mathcal{L}(\mathcal{U}, \mathcal{U})}$.
Furthermore, we have $\Lambda_{\mathcal{U}}(A) = s_{\rm max} (\mathbf{A} \mathbf{K}^{1/2})$.
\end{enumerate}
\end{lemma}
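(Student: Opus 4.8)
The plan is to pass to coordinates in the orthonormal basis $\{\psi_q\}_{q=1}^Q$ of ${\rm Im}(A)$ and reduce every operator norm to a matrix singular value. For the first statement I would start from linearity: writing $\mathbf{y} = \sum_m y_m \mathbf{e}_m$ gives $A(\mathbf{y}) = \sum_m y_m A(\mathbf{e}_m) \in {\rm Im}(A)$, so its coordinates in $\{\psi_q\}$ are $(A(\mathbf{y}),\psi_q) = \sum_m \mathbf{A}_{q,m} y_m = (\mathbf{A}\mathbf{y})_q$. Orthonormality then yields $\|A(\mathbf{y})\| = \|\mathbf{A}\mathbf{y}\|_2$, and taking the supremum over $\mathbf{y}$ gives $\Lambda_2(A) = s_{\rm max}(\mathbf{A})$. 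For $\Lambda_{\mathcal{U}}^{\rm bias}(A)$ I would observe that, for $u \in {\rm Im}(A)$, both $u$ and $A_{\boldsymbol{\ell}}(u)$ lie in ${\rm Im}(A)$, so the whole computation can be carried out in coordinates: if $u = \sum_{q'} a_{q'}\psi_{q'}$ then the coordinates of $u - A_{\boldsymbol{\ell}}(u)$ are $((\mathbf{Id} - \mathbf{A}_{\boldsymbol{\ell}})\mathbf{a})_q$, whence $\|u - A_{\boldsymbol{\ell}}(u)\| = \|(\mathbf{Id} - \mathbf{A}_{\boldsymbol{\ell}})\mathbf{a}\|_2$ and the supremum is $s_{\rm max}(\mathbf{Id} - \mathbf{A}_{\boldsymbol{\ell}})$. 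These steps are routine.

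For the second statement I would first note that $A_{\boldsymbol{\ell}}(u) = A(\boldsymbol{\ell}(u)) \in {\rm Im}(A)$ for every $u$, so ${\rm Im}(A_{\boldsymbol{\ell}}) \subseteq {\rm Im}(A)$; the hypothesis $\Lambda_{\mathcal{U}}^{\rm bias}(A) = 0$ says precisely that $A_{\boldsymbol{\ell}}$ fixes ${\rm Im}(A)$ pointwise. Combining these, $A_{\boldsymbol{\ell}}$ is a (generally oblique) projector onto ${\rm Im}(A)$, and in particular $A_{\boldsymbol{\ell}}^2 = A_{\boldsymbol{\ell}}$. The identity $\Lambda_{\mathcal{U}}(A) = \|Id - A_{\boldsymbol{\ell}}\| = \|A_{\boldsymbol{\ell}}\|$ then follows from the classical fact that a nontrivial idempotent $P$ on a Hilbert space (i.e. $0 \neq P \neq Id$, which holds here since ${\rm Im}(A)$ is a proper nonzero subspace of $\mathcal{U}$ in the present finite-sensor setting) satisfies $\|Id - P\| = \|P\|$; I would either cite this or include a short proof via the geometric characterization $\|P\| = 1/\sin\theta$, where $\theta$ is the (Friedrichs) angle between ${\rm Im}(P)$ and $\ker(P)$, which is symmetric under $P \mapsto Id - P$ since $Id - P$ is the projector onto $\ker(P)$ along ${\rm Im}(P)$. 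This identity is the one genuinely non-elementary ingredient and is the main obstacle: without the nontriviality caveat it is false (e.g. $P = Id$), so some care is needed to record why $A_{\boldsymbol{\ell}}$ is neither $0$ nor $Id$.

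It remains to show $\|A_{\boldsymbol{\ell}}\| = s_{\rm max}(\mathbf{A}\mathbf{K}^{1/2})$. From the coordinate computation above, $\|A_{\boldsymbol{\ell}}(u)\| = \|\mathbf{A}\,\boldsymbol{\ell}(u)\|_2$, so $\|A_{\boldsymbol{\ell}}\| = \sup_{u} \|\mathbf{A}\,\boldsymbol{\ell}(u)\|_2 / \|u\|$. Since $A_{\boldsymbol{\ell}}(u)$ depends on $u$ only through $\mathbf{c} := \boldsymbol{\ell}(u) \in \mathbb{R}^M$, for fixed $\mathbf{c}$ I would minimize $\|u\|$ over all $u$ with $\boldsymbol{\ell}(u) = \mathbf{c}$. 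Because $(q_m, u) = \ell_m(u)$, an orthogonal decomposition shows the minimizer lies in $\mathcal{U}_M = {\rm span}\{q_m\}$; writing it as $\sum_m \beta_m q_m$ gives $\mathbf{c} = \mathbf{K}\boldsymbol{\beta}$ and $\|u\|^2 = \boldsymbol{\beta}^T\mathbf{K}\boldsymbol{\beta} = \mathbf{c}^T\mathbf{K}^{-1}\mathbf{c}$, where $\mathbf{K}$ is invertible because the $\ell_m$, hence the $q_m$, are linearly independent. Thus $\|A_{\boldsymbol{\ell}}\| = \sup_{\mathbf{c}\neq 0} \|\mathbf{A}\mathbf{c}\|_2 / \|\mathbf{c}\|_{\mathbf{K}^{-1}}$, and the substitution $\mathbf{c} = \mathbf{K}^{1/2}\mathbf{d}$ turns the denominator into $\|\mathbf{d}\|_2$, giving $\|A_{\boldsymbol{\ell}}\| = s_{\rm max}(\mathbf{A}\mathbf{K}^{1/2})$. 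The only point needing a line of justification is that restricting the supremum to minimum-norm representers is legitimate, which holds because any $u$ with $\boldsymbol{\ell}(u) = 0$ contributes $0$ to the ratio and hence can be ignored.

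In summary, the bulk of the argument is bookkeeping in the $\{\psi_q\}$ coordinates together with the standard Riesz/minimum-norm reduction that produces the weight $\mathbf{K}^{-1}$; the single place where I expect to invest real effort, and where the hypothesis $\Lambda_{\mathcal{U}}^{\rm bias}(A)=0$ is essential, is establishing (or correctly invoking) the oblique-projector norm identity $\|Id-A_{\boldsymbol{\ell}}\| = \|A_{\boldsymbol{\ell}}\|$.
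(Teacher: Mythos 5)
Your proposal is correct and takes essentially the same route as the paper: part~1 is the coordinate computation the paper omits as ``tedious but straightforward,'' and part~2 follows the identical chain --- idempotency of $A_{\boldsymbol{\ell}}$ from $\Lambda_{\mathcal{U}}^{\rm bias}(A)=0$, the idempotent-norm identity $\|Id-A_{\boldsymbol{\ell}}\|_{\mathcal{L}(\mathcal{U},\mathcal{U})}=\|A_{\boldsymbol{\ell}}\|_{\mathcal{L}(\mathcal{U},\mathcal{U})}$ (which the paper cites from Rako\v{c}evi\'c where you sketch the Friedrichs-angle proof, correctly recording the nontriviality caveat the paper leaves implicit), and then the computation of $s_{\rm max}(\mathbf{A}\mathbf{K}^{1/2})$. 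Your minimum-norm-representer reduction is precisely the paper's step $\sup_{u\in\mathcal{U}}=\sup_{u\in\mathcal{U}_M}$ in disguise, with the substitution $\mathbf{c}=\mathbf{K}\mathbf{q}$ relating your final quotient to theirs.
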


\begin{proof}
Proof  of the identities in \eqref{eq:algebraic_lebesgue} is tedious but straightforward. We omit the details.

To prove the second statement, we recall that 
 $\Lambda_{\mathcal{U}}^{\rm bias}(A) = 0$ if and only if
$A_{\boldsymbol{\ell}}(u) = u$ for all $u \in {\rm Im}(A)$. 
The latter implies that $A_{\boldsymbol{\ell}}^2 = A_{\boldsymbol{\ell}}$. Recalling   \cite[Corollary 3]{rakovcevic2000norm}, we then obtain that
$\Lambda_{\mathcal{U}}(A) =
\| Id  -   A_{\boldsymbol{\ell}}\|_{\mathcal{L}(\mathcal{U}, \mathcal{U})} 
=
\|   A_{\boldsymbol{\ell}}\|_{\mathcal{L}(\mathcal{U},  \mathcal{U} )}$.
Finally, we observe 
$$
\Lambda_{\mathcal{U}}(A) = 
\sup_{u \in \mathcal{U}} \, \frac{\| A_{\boldsymbol{\ell}}(u)  \|}{\|  u \|}
=
\sup_{u \in \mathcal{U}_M} \, \frac{\| A_{\boldsymbol{\ell}}(u)  \|}{\|  u \|}
=
\sup_{\mathbf{q} \in \mathbb{R}^M} \, \frac{\| \mathbf{A} \mathbf{K} \mathbf{q}  \|_2}{\|  \mathbf{K}^{1/2} \mathbf{q} \|_2}
\, = \,
s_{\rm max} (\mathbf{A} \mathbf{K}^{1/2}),
$$
which completes the proof.
\end{proof}

Proposition \ref{th:measures_stability} links the previously-defined quantities to the state estimation error.
We observe that $\Lambda_2(A)$ measures the sensitivity of $A$ to measurement error, while $\Lambda_{\mathcal{U}}(A)$ measures the sensitivity to the approximation error --- given by $\| \Pi_{{\rm Im}(A)^{\perp}} u^{\rm true}  \|$.
Finally, $\Lambda_{\mathcal{U}}^{\rm bias}(A)$ should be interpreted as the maximum possible relative error for perfect measurements (i.e., $\mathbf{y} = \boldsymbol{\ell}(u^{\rm true})$) and perfect approximation (i.e., $u^{\rm true} \in {\rm Im}(A)$).

\begin{proposition}
\label{th:measures_stability}
Given the linear algorithm $A: \mathbb{R}^M \to \mathcal{U}$, the following estimate holds:
\begin{equation}
\label{eq:inf_norm_err}
\| A(\mathbf{y}) - u^{\rm true}  \|
\leq
\Lambda_{2}(A)
\|  \mathbf{y} - \boldsymbol{\ell}(u^{\rm true}) \|_2
\, + \,
\Lambda_{\mathcal{U} }(A)  \, 
\| \Pi_{{\rm Im}(A)^{\perp}} u^{\rm true} \| 
\, + \,
\Lambda_{\mathcal{U} }^{\rm bias}(A)  \, 
\|   u^{\rm true} \| 
~.
\end{equation}
Furthermore, if $\mathbf{y} = \boldsymbol{\ell}(u^{\rm true}) + \boldsymbol{\epsilon}$ with $\epsilon_m \overset{\rm iid}{\sim} (0,\sigma^2)$, the mean-square error is bounded by
\begin{equation}
\label{eq:avg_norm_err}
\begin{array}{rl}
\displaystyle{
\mathbb{E} \left[
\| A(\mathbf{y}) - u^{\rm true}  \|^2
\right] }  \leq &
\displaystyle{
\left( 
\Lambda_{\mathcal{U} }(A)  \, 
\| \Pi_{{\rm Im}(A)^{\perp}} u^{\rm true} \| 
\, + \,
\Lambda_{\mathcal{U} }^{\rm bias}(A)  \, 
\|   u^{\rm true} \| 
 \right)^2 \, 
}
\\[3mm]
&
\displaystyle{
\, + \,
\sigma^2 
\, {\rm trace} 
\left(
\mathbf{A}^T \mathbf{A}
\right),
}
\\
\end{array}
\end{equation}
where $\mathbf{A} $ was introduced in Lemma \ref{th:properties_lebesgue_constants}.
\end{proposition}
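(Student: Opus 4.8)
The plan is to exploit the linearity of $A$ to split the reconstruction error into a data-misfit contribution and a deterministic approximation contribution, and then to control each piece with the appropriate stability constant. The starting point is the identity, valid by linearity of both $A$ and $\boldsymbol{\ell}$,
\begin{equation*}
A(\mathbf{y}) - u^{\rm true} = A\big(\mathbf{y} - \boldsymbol{\ell}(u^{\rm true})\big) + \big(A_{\boldsymbol{\ell}} - Id\big)(u^{\rm true}),
\end{equation*}
which isolates the part of the error driven by the noisy data from the part that would persist even for perfect measurements $\mathbf{y} = \boldsymbol{\ell}(u^{\rm true})$.

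For the deterministic inequality \eqref{eq:inf_norm_err}, I would first bound the data term directly by the definition of $\Lambda_2(A)$ in \eqref{eq:Lebesgue_constant_2norm}, giving $\|A(\mathbf{y} - \boldsymbol{\ell}(u^{\rm true}))\| \le \Lambda_2(A)\,\|\mathbf{y} - \boldsymbol{\ell}(u^{\rm true})\|_2$. The crux is the second term $(Id - A_{\boldsymbol{\ell}})(u^{\rm true})$, which I would handle via the orthogonal decomposition $u^{\rm true} = \Pi_{{\rm Im}(A)} u^{\rm true} + \Pi_{{\rm Im}(A)^{\perp}} u^{\rm true}$. Applying $Id - A_{\boldsymbol{\ell}}$ to the component in ${\rm Im}(A)$ and invoking the definition of the biasing constant \eqref{eq:Lebesgue_constant_bias} yields the bound $\Lambda_{\mathcal{U}}^{\rm bias}(A)\,\|\Pi_{{\rm Im}(A)} u^{\rm true}\| \le \Lambda_{\mathcal{U}}^{\rm bias}(A)\,\|u^{\rm true}\|$, where the last step uses that orthogonal projection is norm-nonincreasing; applying it to the component in ${\rm Im}(A)^{\perp}$ and using the global operator norm \eqref{eq:Lebesgue_constant_Unorm} yields $\Lambda_{\mathcal{U}}(A)\,\|\Pi_{{\rm Im}(A)^{\perp}} u^{\rm true}\|$. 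Summing the three contributions with the triangle inequality gives \eqref{eq:inf_norm_err}.

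For the mean-square bound \eqref{eq:avg_norm_err}, I would set $\mathbf{y} - \boldsymbol{\ell}(u^{\rm true}) = \boldsymbol{\epsilon}$ and write the error as the sum of the random vector $A(\boldsymbol{\epsilon})$ and the deterministic vector $d := (A_{\boldsymbol{\ell}} - Id)(u^{\rm true})$. Expanding the square and taking expectations, the cross term $2\,\mathbb{E}[(A(\boldsymbol{\epsilon}), d)]$ vanishes because $\mathbb{E}[\boldsymbol{\epsilon}] = 0$ and $A$ is linear, so that $\mathbb{E}[A(\boldsymbol{\epsilon})] = 0$; this leaves the bias--variance split $\mathbb{E}[\|A(\mathbf{y}) - u^{\rm true}\|^2] = \mathbb{E}[\|A(\boldsymbol{\epsilon})\|^2] + \|d\|^2$. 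The deterministic term $\|d\|^2$ is controlled by the square of the last two terms of \eqref{eq:inf_norm_err}, reusing the decomposition above. For the variance term I would expand $A(\boldsymbol{\epsilon})$ in the orthonormal basis $\{\psi_q\}$ of ${\rm Im}(A)$: since the $\psi_q$-coefficient of $A(\boldsymbol{\epsilon})$ is $(\mathbf{A}\boldsymbol{\epsilon})_q$ with $\mathbf{A}$ from Lemma \ref{th:properties_lebesgue_constants}, one has $\|A(\boldsymbol{\epsilon})\|^2 = \boldsymbol{\epsilon}^T \mathbf{A}^T \mathbf{A}\, \boldsymbol{\epsilon}$, and taking expectations with $\mathbb{E}[\boldsymbol{\epsilon}\boldsymbol{\epsilon}^T] = \sigma^2 \mathbf{Id}$ and using the cyclic property of the trace gives $\sigma^2\,{\rm trace}(\mathbf{A}^T \mathbf{A})$.

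I do not expect a genuine obstacle: both parts are elementary once the right decompositions are in place. The only point requiring care is the split of the deterministic error over ${\rm Im}(A)$ and its orthogonal complement, which is precisely what allows the sharper biasing constant $\Lambda_{\mathcal{U}}^{\rm bias}(A)$ --- rather than the cruder $\Lambda_{\mathcal{U}}(A)$ --- to govern the in-space contribution; the reversed assignment would still be valid but weaker. A secondary technical point is the vanishing of the cross term and the interchange of expectation with the quadratic form, both immediate here since $\boldsymbol{\epsilon}$ has finite second moments and $A(\boldsymbol{\epsilon})$ lives in the finite-dimensional space ${\rm Im}(A)$.
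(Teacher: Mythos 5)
Your proof is correct and follows essentially the same route as the paper's: the same splitting of the error into $A\left(\mathbf{y}-\boldsymbol{\ell}(u^{\rm true})\right)$ plus $(A_{\boldsymbol{\ell}}-Id)(u^{\rm true})$, the same orthogonal decomposition over ${\rm Im}(A)$ and ${\rm Im}(A)^{\perp}$ assigning $\Lambda_{\mathcal{U}}^{\rm bias}(A)$ to the in-space component, and the same bias--variance split with the cross term vanishing since $\mathbb{E}[A(\boldsymbol{\epsilon})]=0$. The only cosmetic difference is that you derive $\mathbb{E}\left[\|\mathbf{A}\boldsymbol{\epsilon}\|_2^2\right]=\sigma^2\,{\rm trace}\left(\mathbf{A}^T\mathbf{A}\right)$ directly from $\mathbb{E}[\boldsymbol{\epsilon}\boldsymbol{\epsilon}^T]=\sigma^2\mathbf{Id}$ and the cyclic property of the trace, where the paper cites a textbook result for the same identity.
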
 

\begin{proof}
Exploiting the definition of $\Lambda_{\mathcal{U} }(A) $ and $\Lambda_{\mathcal{U} }^{\rm bias}(A) $, we find
$$
\begin{array}{rl}
\displaystyle{ \|  u   - A_{\boldsymbol{\ell}}(u) \| \, \leq }
&
\displaystyle{
\|  (I  - A_{\boldsymbol{\ell}}) \Pi_{{\rm Im}(A)^{\perp}} u  \|
\, + \,
\|  (I  - A_{\boldsymbol{\ell}}) \Pi_{{\rm Im}(A)} u  \| 
}
\\[3mm]
\leq &
\displaystyle{
\Lambda_{\mathcal{U} }(A) \| \Pi_{{\rm Im}(A)^{\perp}} u  \|
\, + \,
\Lambda_{\mathcal{U} }^{\rm bias}(A) \| u  \|.
}
\end{array}
$$
{  Then, we obtain \eqref{eq:inf_norm_err}: }
$$
\begin{array}{l}
\displaystyle{
\| A(\mathbf{y}) - u^{\rm true}  \|
\leq
\| A\left( \mathbf{y} -  \boldsymbol{\ell} (u^{\rm true}) \right)  \|
+
\|   A_{\boldsymbol{\ell}}(u^{\rm true}) - u^{\rm true}  \|
}
\\[3mm]
\displaystyle{
\hspace{0.8in} \leq
\Lambda_{2}(A)
\|  \mathbf{y} - \boldsymbol{\ell}(u^{\rm true}) \|_2
\, + \,
\Lambda_{\mathcal{U} }(A) \| \Pi_{{\rm Im}(A)^{\perp}} u^{\rm true}  \|
+
\Lambda_{\mathcal{U} }^{\rm bias}(A) \| u^{\rm true}  \|.}
\end{array}
$$

To prove \eqref{eq:avg_norm_err}, 
we first define  $\mathbf{y}^{\rm true} := \boldsymbol{\ell}(u^{\rm true})$; then, { if we exploit the definition of $\mathbf{A}$,}
we find
$$
\begin{array}{l}
\displaystyle{
\mathbb{E} \left[
\| A(\mathbf{y}) - u^{\rm true}  \|^2
\right]  \le
\mathbb{E} \left[ 
\big\|
 A\left( \mathbf{y} -  \mathbf{y}^{\rm true} \right) 
\, + \,
\left( A_{\boldsymbol{\ell}}  (u^{\rm true}) -  u^{\rm true}  \right) 
 \big\|^2 
\right]
}
\\[3mm]
\hspace{0.1in} 
\displaystyle{
=
\mathbb{E} \left[  \big\|  A\left( \boldsymbol{\epsilon}  \right)   \big\|^2  \right]
\, + \,
\big\| A_{\boldsymbol{\ell}}  (u^{\rm true}) -  u^{\rm true}  \big\|^2
=
\sigma^2 {\rm trace} (\mathbf{A}^T \mathbf{A})
\, + \,
\big\| A_{\boldsymbol{\ell}}  (u^{\rm true}) -  u^{\rm true}  \big\|^2}
\\[3mm]
\hspace{0.1in}
\displaystyle{
\leq
\sigma^2 {\rm trace} (\mathbf{A}^T \mathbf{A})
\, + \,
\left( 
\Lambda_{\mathcal{U} }(A) \| \Pi_{{\rm Im}(A)^{\perp}} u^{\rm true}  \|
+
\Lambda_{\mathcal{U} }^{\rm bias}(A) \| u^{\rm true}  \|
\right)^2,}
\end{array}
$$
which is the thesis.
In the second-to-last step we used the identity
 $\mathbb{E} \big[$
 $  \big\|  A\big( \boldsymbol{\epsilon}  \big)$ 
 $ \big\|^2  \big] = \mathbb{E} \left[  \big\| 
\mathbf{A}  \,  \boldsymbol{\epsilon}    \big\|_2^2  \right]$, and then we applied
\cite[Theorem C, Chapter 14.4]{rice2006mathematical} .
\end{proof}

\begin{remark}
\label{remark_berger}
\textbf{Perfect algorithms.}
In \cite{berger2017sampling}, the authors restrict their attention to \emph{perfect algorithms}, that is algorithms satisfying   $A_{\boldsymbol{\ell}}(u) = u$ for all $u \in {\rm Im}(A)$.
Clearly, a linear algorithm $A$  is perfect if and only if 
$\Lambda_{\mathcal{U}}^{\rm bias}(A) = 0$. If $A$ is perfect, estimate \eqref{eq:inf_norm_err} reduces to 
$$
\| A(\mathbf{y}) - u^{\rm true}  \|
\leq
\Lambda_{2}(A)
\|  \mathbf{y} - \boldsymbol{\ell}(u^{\rm true}) \|_2
\, + \,
\| A_{\boldsymbol{\ell}}  \|_{\mathcal{L}(\mathcal{U}, \mathcal{U})} \, 
\| \Pi_{{\rm Im}(A)^{\perp}} u^{\rm true} \| 
~,
$$
which is the error bound proved in \cite{berger2017sampling}. 
We recall that in \cite{berger2017sampling} $\Lambda_{2}(A)$ is referred to as \emph{reconstruction operator norm}, while $\Lambda_{\mathcal{U}}(A)$ is called \emph{quasi-optimality constant}. As observed in the next section, for $\xi \in (0,\infty)$ 
PBDW is not perfect; therefore, the analysis in \cite{berger2017sampling} cannot be applied.
\end{remark}

\subsubsection{Application to Ridge regression}
\label{sec:remark_ridge}
%\begin{remark}
%\label{remark_ridge}
%\textbf{Ridge regression.}
Before applying the error analysis to  PBDW, we specialize our analysis to the recovery algorithm associated with the following optimization statement:
\begin{equation}
\label{eq:spline_smoothing}
\min_{u \in \mathcal{U}} \, \xi \|  u  \|^2 \, + \, \|   \boldsymbol{\ell}(u) - \mathbf{y} \|_2^2.
\end{equation}
We denote by   $A_{\xi}$ 
the recovery algorithm associated to  \eqref{eq:spline_smoothing}.
We remark that \eqref{eq:spline_smoothing} has been widely studied in the context of spline smoothing
and learning theory:
more in detail,  \eqref{eq:spline_smoothing} is typically referred to as Ridge regression in the statistics literature, and as Tikhonov regularization in the inverse problem literature; we refer to 
 \cite{wendland2005approximate} and to the references therein for a thorough discussion.
We observe that  ${\rm Im}(A_{\xi}) = \mathcal{U}_M$; furthermore,  the constants in Proposition \ref{th:measures_stability} are given by
$$
\Lambda_2(A_{\xi}) = \max_{m=1,\ldots, M} \, \frac{\sqrt{\lambda_m(\mathbf{K})}}{\xi + \lambda_m(\mathbf{K})},
\quad
\Lambda_{\mathcal{U}}(A_{\xi}) = 1,
\quad
\Lambda_{\mathcal{U}}^{\rm bias}(A_{\xi}) = 1 \, - \, \frac{  \lambda_{\rm min}(\mathbf{K})    }{\xi + \lambda_{\rm min}(\mathbf{K}) }.
$$
Since  
$\Lambda_{\mathcal{U}}^{\rm bias}(A_{\xi}) \neq  0$, the algorithm does not belong to the class of methods studied in  \cite{berger2017sampling}. On the other hand,  applying \eqref{eq:inf_norm_err}, we obtain the estimate:
$$
\|  A_{\xi}(\mathbf{y}) - u^{\rm true}  \| \, \leq 
\,
\frac{1}{2\sqrt{\xi}}
\,
\|  \mathbf{y} -  \boldsymbol{\ell}(u^{\rm true}) \|_2
\, + \,
\| \Pi_{\mathcal{U}_M^{\perp}} u^{\rm true}  \|
\, +\,
\left(
1 \, - \, \frac{  \lambda_{\rm min}(\mathbf{K})    }{\xi + \lambda_{\rm min}(\mathbf{K}) }
\right)
\,
\| u^{\rm true}  \|,
$$
where we used the identity
$\max_{ x \in (0, \infty)} \frac{x}{\xi + x^2} = \frac{1}{2\sqrt{\xi}}$ to bound $\Lambda_2(A_{\xi})$. 
Note that in presence of noise the optimal value $\xi^{\rm opt}$ of $\xi$ that minimizes the right-hand side of the error bound satisfies $0 < \xi^{\rm opt}  < \infty$.

\subsubsection{Application to linear PBDW}
We now specialize the analysis to the linear PBDW recovery algorithm, $A^{\rm pbdw, \xi}$.
First,  we observe that  $A^{\rm pbdw, \xi}$ satisfies:
$$
{\rm Im} \left( A^{\rm pbdw, \xi}  \right)
=
\left\{
\begin{array}{ll}
\mathcal{Z}_N \oplus \left(\mathcal{Z}_N^{\perp} \cap \mathcal{U}_M  \right)
&
\xi \in [0,\infty) 
\\[3mm]
\mathcal{Z}_N
&
\xi = \infty
\\
\end{array}
\right.
$$
Moreover, for all values of $\xi$, 
$(Id - A_{\boldsymbol{\ell}}^{\rm pbdw, \xi}) z = 0 $ for all $z \in \mathcal{Z}_N$; as a result, we can specialize \eqref{eq:inf_norm_err} as
$$
\begin{array}{l}
\displaystyle{ \| A^{\rm pbdw, \xi}(\mathbf{y}) - u^{\rm true}  \|  
\, \leq \,
\Lambda_{2}(A^{\rm pbdw, \xi})
\|  \mathbf{y} - \boldsymbol{\ell}(u^{\rm true}) \|_2
\,
}
\\[3mm]
\hspace{0.7in}
\displaystyle{
+ \,
\Lambda_{\mathcal{U} }(A^{\rm pbdw, \xi})  \, 
\| \Pi_{{\rm Im}(A^{\rm pbdw, \xi})^{\perp}} u^{\rm true} \| 
\, + \,
\Lambda_{\mathcal{U} }^{\rm bias}(A^{\rm pbdw, \xi})  \, 
\|  \Pi_{\mathcal{Z}_N^{\perp} \cap \mathcal{U}_M  } u^{\rm true} \| 
~.}
\\
\end{array}
$$
We can further bound the latter as
$$
\begin{array}{rl}
\displaystyle{
\| A^{\rm pbdw, \xi}(\mathbf{y}) - u^{\rm true}  \|
\leq}
&
\displaystyle{
\Lambda_{2}(A^{\rm pbdw, \xi})
\|  \mathbf{y} - \boldsymbol{\ell}(u^{\rm true}) \|_2
}
\\[3mm]
&
\displaystyle{
\, + \,
\left(
\Lambda_{\mathcal{U} }(A^{\rm pbdw, \xi})  \, 
\, + \,
\Lambda_{\mathcal{U} }^{\rm bias}(A^{\rm pbdw, \xi})  
\right)
\,
\|  \Pi_{\mathcal{Z}_N^{\perp}}  u^{\rm true} \| 
~.
}\\
\end{array}
$$
Therefore, we can interpret the sum $\left(
\Lambda_{\mathcal{U} }  \, + \, \Lambda_{\mathcal{U} }^{\rm bias}\right)$
as the sensitivity to the model mismatch\footnote{
More precisely, as explained in \cite[section 2.7]{maday2015parameterized},
$\|  \Pi_{\mathcal{Z}_N^{\perp}}  u^{\rm true} \| $ should be interpreted as the sum of a \emph{nonparametric model error}
$\inf_{w \in \mathcal{M}^{\rm bk}} \| u^{\rm true} - w  \|$ and of a \emph{discretization error} 
$\sup_{w \in \mathcal{M}^{\rm bk}} \|  \Pi_{\mathcal{Z}_N^{\perp}} w  \|$ associated with the compression of the solution manifold.
}
 $\|  \Pi_{\mathcal{Z}_N^{\perp}}  u^{\rm true} \| $, and 
 $\Lambda_{2}$ as the sensitivity to experimental noise.

 It is easy to show that 
$\Lambda_{\mathcal{U} }^{\rm bias}(A^{\rm pbdw, \xi}) \to 0$ as $\xi \to 0^+$ and
 $\Lambda_{\mathcal{U} }^{\rm bias}(A^{\rm pbdw, \xi}) \to 1$ as $\xi \to \infty$.
For the model problems considered in section \ref{sec:numerics}, we further empirically demonstrate that 
$\Lambda_{2}(A^{\rm pbdw, \xi})$ is monotonic decreasing in $\xi$, while 
$\Lambda_{\mathcal{U} }(A^{\rm pbdw, \xi})$ and $\Lambda_{\mathcal{U} }^{\rm bias}(A^{\rm pbdw, \xi})$
are monotonic increasing in $\xi$: this suggests that the optimal value of $\xi$ should depend on the ratio
$\|  \mathbf{y} - \boldsymbol{\ell}(u^{\rm true}) \|_2 / \|  \Pi_{\mathcal{Z}_N^{\perp}}  u^{\rm true} \| $.
In the  numerical experiments, we also find that $\Lambda_2(A^{\rm pbdw, \xi}) $ and  $\Lambda_{\mathcal{U}}(A^{\rm pbdw, \xi})$ increase as $N$ increases while $\|  \Pi_{\mathcal{Z}_N^{\perp}} u^{\rm true} \|$ decreases as $N$ increases;
as a result, the choice  of $N$  should also reflect the amount of noise and the behavior of $\|  \Pi_{\mathcal{Z}_N^{\perp}} u^{\rm true} \|$ with $N$.
Since the noise level and $\|  \Pi_{\mathcal{Z}_N^{\perp}} u^{\rm true} \|$ are  typically unknown, 
the choice of $\xi$ and $N$ should be performed online based on out-of-sample data.
We anticipate that the constrained formulation is significantly less sensitive to the choice of $N$ than the standard unconstrained approach; on the other hand,  both formulations are nearly equally sensitive to the choice of $\xi$.
We also emphasize that the present discussion for the choice of $\xi$ is in good  agreement with the conclusions drawn  in  \cite{taddei2017adaptive,maday2017adaptive}.

% The error analysis leads to the interpretation of $N$ and $\xi$ as regularization parameters:
% the choice of $N$ primarily influences the deduced background, while the choice of $\xi$ primarily influences the update.
% the background dimension $N$ can be interpreted as a regularization parameter for the background, which should also be adapted based on online data.
%Note that $\xi$ acts primarly on the update, while $N$ mainly influences the background. 

We further observe that the biasing constant satisfies:
$$
\Lambda_{\mathcal{U}}^{\rm bias}  \left( A^{\rm pbdw, \xi}  \right)
=
\left\{
\begin{array}{ll}
0
&
\xi \in \{ 0,\infty \},
\\[3mm]
>0
&
\xi  \in (0,\infty),
\\
\end{array}
\right.
$$
and is continuous in $[0,\infty)$.
Finally, we observe that for $\xi = 0$ we can relate $  \Lambda_{\mathcal{U}}$ to the inf-sup constant $ \beta_{N,M} = \inf_{z \in \mathcal{Z}_N} \sup_{q \in \mathcal{U}_M} \, \frac{(z,q)}{\| z \|  \| q\|}$ introduced in \cite{maday2015pbdw} to measure stability with respect to model mismatch for \eqref{eq:pbdw0}:
  \begin{equation}
  \label{eq:identity_inf_sup}
  \Lambda_{\mathcal{U}}(A^{\rm pbdw, \xi=0} ) = \frac{1}{\beta_{N,M}}.
  \end{equation}
Identity  \eqref{eq:identity_inf_sup}  implies that 
\eqref{eq:inf_norm_err} reduces to the estimate proved in \cite{binev2017data}
for $A=A^{\rm pbdw, \xi=0}$ and perfect measurements.

\begin{proof}
(Identity  \eqref{eq:identity_inf_sup})
The state estimate  $\hat{u}_0 = A^{\rm pbdw,\xi =0}( \boldsymbol{\ell}(u) ) \in {\rm Im}(A) = \mathcal{Z}_N \oplus (\mathcal{Z}_N^{\perp} \cap \mathcal{U}_M)$ satisfies  
$(\hat{u}_0 , q   )= (u, q)$ for all $q \in \mathcal{U}_M$. As a result, 
recalling  standard results in Functional Analysis and Lemma \ref{th:properties_lebesgue_constants},
we find $\Lambda_{\mathcal{U}}( A^{\rm pbdw,\xi =0}) =  \frac{1}{\tilde{\beta}}$ where $\tilde{\beta} = \inf_{w \in {\rm Im}(A)} \sup_{q \in \mathcal{U}_M}
\frac{(w,q)}{\| w \| \| q \|  }$. It remains to prove that
$\tilde{\beta} = \beta_{N,M}$:
$$
\begin{array}{rl}
\tilde{\beta}^2 = 
&
\displaystyle{
\inf_{(z,\eta) \in \mathcal{Z}_N \times (\mathcal{Z}_N^{\perp} \cap \mathcal{U}_M) }
\, \left(
\sup_{q \in \mathcal{U}_M}
\frac{(z+\eta, q)}{\sqrt{ \| z \|^2 + \| \eta \|^2   } \| q \|  }
\right)^2}
\\[4mm]
&
\displaystyle{
\,  \overset{\rm (i)}{=} \,
\inf_{(z,\eta) \in \mathcal{Z}_N \times (\mathcal{Z}_N^{\perp} \cap \mathcal{U}_M) } \,
\frac{\|  \Pi_{\mathcal{U}_M} (z) +\eta   \|^2}{  \| z \|^2 + \| \eta \|^2  }}
\\[4mm]
&
\displaystyle{
\overset{\rm (ii)}{=}  \,
\inf_{(z,\eta) \in \mathcal{Z}_N \times (\mathcal{Z}_N^{\perp} \cap \mathcal{U}_M) } \,
\frac{\|  \Pi_{\mathcal{U}_M} (z)   \|^2 + \|   \eta   \|^2}{ \| z \|^2 + \| \eta \|^2}
}
\\[4mm]
\,  \, &
\displaystyle{
\overset{\rm (iii)}{=} 
\inf_{ z \in \mathcal{Z}_N } \,
\frac{\|  \Pi_{\mathcal{U}_M} (z)   \|^2  }{ \| z \|^2}
=
\inf_{ z \in \mathcal{Z}_N } \,
\sup_{q \in \mathcal{U}_M} 
\left(
\frac{(z,q)  }{ \| z \|  \| q \| }
\right)^2
=
\beta_{N,M}^2
}
\\[3mm]
\end{array}
$$ 
where we used 
(i)  $\eta \in  \mathcal{U}_M$, 
(ii) 
$(\eta, \Pi_{\mathcal{U}_M} z) = (\eta, z) = 0$
(which exploits the fact that $\eta \in \mathcal{Z}_N^{\perp}  \cap \mathcal{U}_M$), and
(iii) $\|  \Pi_{\mathcal{U}_M} z  \|^2 \leq \|  z   \|^2$.
\end{proof}

%&&&&&&&&&&&&&&&&&&&&&&&&&&&&&&&&&&
\subsubsection{Optimality of PBDW algorithms}
\label{sec:optimality}

In the next two Propositions, we prove  two optimality statements satisfied by PBDW for the limit cases $\xi = 0$ and $\xi = \infty$.

The first result --- which was proved in {  Theorem 2.9 of}  \cite{binev2017data} ---
illustrates the connection between PBDW and the problem of optimal recovery (\cite{micchelli1977survey}), for perfect measurements. We recall that in \cite[Chapter 2.2.2]{taddei2017model} a similar optimality statement is proved for the case $\xi > 0$. Note that another relevant result on the optimality of PBDW can be found in the recent work \cite{CDDFMN2019}, where it is proven that the optimal affine algorithm that is possible to build among all state estimation algorithms can be expressed as a PBDW algorithm.

The second result shows that the algorithm
$\Pi_{\mathcal{Z}_N} A^{\rm pbdw, \xi}: \mathbf{y} \mapsto \hat{z}_{\xi} = \sum_{n=1}^N \, 
\left(  \hat{\mathbf{z}}_{\xi} \right)_n $ $\zeta_n $ for $\Phi_N = \mathbb{R}^N$ minimizes $\Lambda_2$ for $\xi = \infty$ and
$\Lambda_{\mathcal{U}}$ for $\xi = 0$ over all linear algorithms $A: \mathbb{R}^M \to \mathcal{Z}_N$ satisfying $\Lambda_{\mathcal{U}}^{\rm bias}(A) = 0$. As mentioned in the introduction, Proposition \ref{th:optimality_gauss_markov} has been proved in 
\cite{berger2017sampling}. The proof of Proposition \ref{th:optimality_binev} is omitted, while the proof of Proposition \ref{th:optimality_gauss_markov} --- which exploits a different argument from the one in \cite{berger2017sampling} --- is contained in Appendix \ref{sec:appendix_gauss_markov}.

\begin{proposition}
\label{th:optimality_binev}
Given the space $\mathcal{Z}_N = {\rm span} \{ \zeta_n \}_{n=1}^N \subset \mathcal{U}$ and the set of linear observation functionals $\boldsymbol{\ell}: \mathcal{U} \to \mathbb{R}^M$, we introduce the compact set
$$
\mathcal{K}_{N,M}(\delta, \mathbf{y}) :=
\left\{
u \in \mathcal{U}: \,
\| \Pi_{\mathcal{Z}_N^{\perp}} u  \| \leq \delta, \; \;
\boldsymbol{\ell}(u) = \mathbf{y}
\right\}
$$
where $\delta>0$ is a given constant. Then, for all $\mathbf{y} \in \mathbb{R}^M$ and $\delta>0$ such that $\mathcal{K}_{N,M}(\delta, \mathbf{y})$ is not empty, the linear  PBDW algorithm $A^{\rm pbdw, \xi=0}: \mathbb{R}^M \to \mathcal{U}$ satisfies
$$
A^{\rm pbdw, \xi=0}(\mathbf{y} )  = {\rm arg} \inf_{w \in \mathcal{U}} \, \sup_{u \in \mathcal{K}_{N,M}(\delta, \mathbf{y})} \,
\|  u  - w\|.
$$
Note that the PBDW algorithm does not depend on the value of $\delta$.
\end{proposition}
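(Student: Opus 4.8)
The plan is to recognize this as the classical optimal-recovery (Chebyshev-center) identity: I will show that the set $\mathcal{K}_{N,M}(\delta,\mathbf{y})$ is symmetric about the point $u^\star := A^{\mathrm{pbdw},\xi=0}(\mathbf{y})$, and then invoke the elementary fact that the center of symmetry of a set minimizes the worst-case distance to that set. First I would record the structure of $u^\star$ established earlier for $\Phi_N = \mathbb{R}^N$: writing $u^\star = \hat z_0 + \hat\eta_0$, we have $\hat\eta_0 \in \mathcal{Z}_N^{\perp} \cap \mathcal{U}_M$, so that $\Pi_{\mathcal{Z}_N^{\perp}} u^\star = \hat\eta_0 \in \mathcal{U}_M$, together with the interpolation constraint $\boldsymbol{\ell}(u^\star) = \mathbf{y}$ coming from \eqref{eq:pbdw0}.

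The heart of the argument is the symmetry. Given any $u \in \mathcal{K}_{N,M}(\delta,\mathbf{y})$, set $v := u - u^\star$ and check that the reflection $2u^\star - u = u^\star - v$ again lies in $\mathcal{K}_{N,M}(\delta,\mathbf{y})$. The affine constraint is immediate: $\boldsymbol{\ell}(v) = \boldsymbol{\ell}(u) - \boldsymbol{\ell}(u^\star) = \mathbf{0}$ yields $\boldsymbol{\ell}(u^\star - v) = \mathbf{y}$, and moreover $\boldsymbol{\ell}(v) = \mathbf{0}$ is equivalent to $v \in \mathcal{U}_M^{\perp}$. For the norm constraint, the key computation is that $\Pi_{\mathcal{Z}_N^{\perp}} u^\star$ and $\Pi_{\mathcal{Z}_N^{\perp}} v$ are orthogonal: since $\Pi_{\mathcal{Z}_N^{\perp}} u^\star = \hat\eta_0 \in \mathcal{Z}_N^{\perp}$, self-adjointness of the projection gives $(\hat\eta_0, \Pi_{\mathcal{Z}_N^{\perp}} v) = (\hat\eta_0, v)$, which vanishes because $\hat\eta_0 \in \mathcal{U}_M$ while $v \in \mathcal{U}_M^{\perp}$. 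Consequently $\|\Pi_{\mathcal{Z}_N^{\perp}}(u^\star \pm v)\|^2 = \|\Pi_{\mathcal{Z}_N^{\perp}} u^\star\|^2 + \|\Pi_{\mathcal{Z}_N^{\perp}} v\|^2$ takes the same value for both signs, so $\|\Pi_{\mathcal{Z}_N^{\perp}}(2u^\star - u)\| = \|\Pi_{\mathcal{Z}_N^{\perp}} u\| \leq \delta$, which establishes $2u^\star - u \in \mathcal{K}_{N,M}(\delta,\mathbf{y})$.

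The last step is the generic Chebyshev-center inequality. For arbitrary $w \in \mathcal{U}$ and any $u_0 \in \mathcal{K}_{N,M}(\delta,\mathbf{y})$, its reflection $\tilde u_0 := 2u^\star - u_0$ also belongs to the set, and the triangle inequality gives $\max\{\|u_0 - w\|, \|\tilde u_0 - w\|\} \geq \tfrac{1}{2}\|u_0 - \tilde u_0\| = \|u_0 - u^\star\|$. Since both $u_0$ and $\tilde u_0$ lie in the set, the left-hand side is bounded by $\sup_{u}\|u - w\|$; taking the supremum over $u_0$ then yields $\sup_{u}\|u - w\| \geq \sup_{u}\|u - u^\star\|$ for every $w$, so $w = u^\star$ attains the infimum, which is exactly the asserted identity.

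The main obstacle is establishing the symmetry, namely the orthogonality $\Pi_{\mathcal{Z}_N^{\perp}} u^\star \perp \Pi_{\mathcal{Z}_N^{\perp}} v$; this is precisely where the special structure of the $\xi=0$ estimate is used, i.e. $\hat\eta_0 \in \mathcal{Z}_N^{\perp} \cap \mathcal{U}_M$ together with $\boldsymbol{\ell}(u^\star) = \mathbf{y}$, and it also explains transparently why the center is independent of $\delta$ (the radius enters only through the common term $\|\Pi_{\mathcal{Z}_N^{\perp}} v\|$, which is symmetric in $\pm v$). The closing inequality is routine and requires only that the set be nonempty and symmetric, so no compactness beyond what is assumed in the statement is needed.
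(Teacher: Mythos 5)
Your proof is correct, and it is essentially the standard optimal-recovery argument that the paper itself omits and defers to (Theorem 2.9 of \cite{binev2017data}): central symmetry of $\mathcal{K}_{N,M}(\delta,\mathbf{y})$ about $u^\star = A^{\rm pbdw,\xi=0}(\mathbf{y})$, resting precisely on $\hat\eta_0 \in \mathcal{Z}_N^{\perp}\cap\mathcal{U}_M$ (which for $\xi=0$ indeed requires $\Phi_N=\mathbb{R}^N$, as you use) and the interpolation property $\boldsymbol{\ell}(u^\star)=\mathbf{y}$, followed by the Chebyshev-center inequality. One refinement: your triangle-inequality step shows only that $u^\star$ \emph{attains} the infimum, whereas the stated ``$\mathrm{arg}\inf$'' asserts it is the unique minimizer; replacing the triangle inequality by the parallelogram identity $\|u_0-w\|^2+\|\tilde u_0-w\|^2 = 2\|u_0-u^\star\|^2+2\|w-u^\star\|^2$ yields $\sup_{u}\|u-w\|^2 \ge \sup_{u}\|u-u^\star\|^2 + \|w-u^\star\|^2$, so any $w\neq u^\star$ is strictly suboptimal (the suprema being finite since $\beta_{N,M}>0$ makes the set bounded).
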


\begin{proposition}
\label{th:optimality_gauss_markov}
Given the space $\mathcal{Z}_N = {\rm span} \{ \zeta_n \}_{n=1}^N \subset \mathcal{U}$, and the set of linear functionals 
$\boldsymbol{\ell}: \mathcal{U} \to \mathbb{R}^M$,
let  $A$ be a linear algorithm 
such that ${\rm Im}(A) = \mathcal{Z}_N$ and $\Lambda_{\mathcal{U}}^{\rm bias}(A) = 0$.
Then, 
\begin{equation}
\label{eq:optimality_PBDW}
\Lambda_2(A) \geq \Lambda_2(A^{\rm pbdw, \xi = \infty}),
\qquad
\Lambda_{\mathcal{U}}(A) \geq \Lambda_{\mathcal{U}}
\left( 
 \Pi_{\mathcal{Z}_N}A^{\rm pbdw, \xi = 0} \right).
\end{equation}
\end{proposition}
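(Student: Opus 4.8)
The plan is to recast both inequalities as a single statement in linear algebra: among all left-inverses of a fixed full-column-rank matrix, the Moore--Penrose pseudo-inverse minimizes the spectral norm. First I would reduce to a convenient basis. Since $\Lambda_2$ and $\Lambda_{\mathcal{U}}$ defined in \eqref{eq:Lebesgue_constant_2norm}--\eqref{eq:Lebesgue_constant_Unorm} are intrinsic to the operator $A$, and since the least-squares problems \eqref{eq:pbdw_inf} and \eqref{eq:two_stage_xi0} (with $\Phi_N = \mathbb{R}^N$) characterize $A^{\rm pbdw, \xi = \infty}$ and $\Pi_{\mathcal{Z}_N} A^{\rm pbdw, \xi = 0}$ independently of the chosen basis of $\mathcal{Z}_N$, I may assume $\{ \zeta_n \}_{n=1}^N$ orthonormal. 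Any linear $A: \mathbb{R}^M \to \mathcal{Z}_N$ is then represented by a matrix $\mathbf{A}_z \in \mathbb{R}^{N,M}$ via $A(\mathbf{y}) = \sum_n (\mathbf{A}_z \mathbf{y})_n \zeta_n$, and the hypothesis $\Lambda_{\mathcal{U}}^{\rm bias}(A) = 0$ (i.e., $A(\boldsymbol{\ell}(z)) = z$ for all $z \in \mathcal{Z}_N = {\rm Im}(A)$) translates exactly into the constraint $\mathbf{A}_z \mathbf{L} = \mathbf{Id}$, that is, $\mathbf{A}_z$ is a left-inverse of $\mathbf{L}$. Lemma \ref{th:properties_lebesgue_constants} then supplies the clean algebraic expressions $\Lambda_2(A) = s_{\rm max}(\mathbf{A}_z)$ and, since $A$ is perfect, $\Lambda_{\mathcal{U}}(A) = s_{\rm max}(\mathbf{A}_z \mathbf{K}^{1/2})$.

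Next I would isolate the core lemma: if $\mathbf{M} \in \mathbb{R}^{M,N}$ has full column rank and $\mathbf{X} \mathbf{M} = \mathbf{Id}$, then $s_{\rm max}(\mathbf{X}) \geq s_{\rm max}(\mathbf{M}^+)$ with $\mathbf{M}^+ = (\mathbf{M}^T \mathbf{M})^{-1} \mathbf{M}^T$. Writing $\mathbf{X} = \mathbf{M}^+ + \mathbf{B}$, the constraint forces $\mathbf{B} \mathbf{M} = 0$, so each row of $\mathbf{B}$ lies in ${\rm Im}(\mathbf{M})^{\perp}$, whereas each row of $\mathbf{M}^+$ lies in ${\rm Im}(\mathbf{M})$. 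Because $\mathbf{M}^+$ vanishes on ${\rm Im}(\mathbf{M})^{\perp}$, its action is supported on ${\rm Im}(\mathbf{M})$ and a maximizer $\mathbf{y}^{\star} \in {\rm Im}(\mathbf{M})$ of $\mathbf{M}^+$ exists; for this $\mathbf{y}^{\star}$ one has $\mathbf{B} \mathbf{y}^{\star} = 0$, hence $\| \mathbf{X} \mathbf{y}^{\star} \|_2 = \| \mathbf{M}^+ \mathbf{y}^{\star} \|_2 = s_{\rm max}(\mathbf{M}^+)$, which gives the claim.

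Finally I would apply the lemma twice. For the first inequality, minimizing $s_{\rm max}(\mathbf{A}_z)$ over left-inverses of $\mathbf{L}$ yields the minimizer $\mathbf{A}_z = \mathbf{L}^+$, which is precisely the coefficient map of the unweighted least-squares problem \eqref{eq:pbdw_inf}, namely $A^{\rm pbdw, \xi = \infty}$; hence $\Lambda_2(A) \geq \Lambda_2(A^{\rm pbdw, \xi = \infty})$. For the second, I set $\tilde{\mathbf{A}} = \mathbf{A}_z \mathbf{K}^{1/2}$ and $\tilde{\mathbf{L}} = \mathbf{K}^{-1/2} \mathbf{L}$, so the constraint becomes $\tilde{\mathbf{A}} \tilde{\mathbf{L}} = \mathbf{Id}$ and the objective $s_{\rm max}(\tilde{\mathbf{A}})$; the lemma gives minimizer $\tilde{\mathbf{A}} = \tilde{\mathbf{L}}^+$, i.e., $\mathbf{A}_z = (\mathbf{L}^T \mathbf{K}^{-1} \mathbf{L})^{-1} \mathbf{L}^T \mathbf{K}^{-1}$, which is the coefficient map of the $\mathbf{K}^{-1}$-weighted least-squares problem in \eqref{eq:two_stage_xi0}. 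Since $\hat{\eta}_0 \in \mathcal{Z}_N^{\perp} \cap \mathcal{U}_M$, projecting the $\xi = 0$ estimate onto $\mathcal{Z}_N$ returns exactly $\hat{z}_0$, so this minimizer is $\Pi_{\mathcal{Z}_N} A^{\rm pbdw, \xi = 0}$, and $\Lambda_{\mathcal{U}}(A) \geq \Lambda_{\mathcal{U}}(\Pi_{\mathcal{Z}_N} A^{\rm pbdw, \xi = 0})$.

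The argument is short once the lemma is in place; the two points that require care are the reduction to an orthonormal basis (legitimate precisely because the stability constants and both PBDW operators are basis-independent) and the correct matching of the two PBDW coefficient maps with $\mathbf{L}^+$ and $\tilde{\mathbf{L}}^+$. The conceptual heart, and the only real obstacle, is recognizing the Gauss--Markov structure: the unweighted least-squares map minimizes the noise-sensitivity $\Lambda_2$, while the $\mathbf{K}^{-1}$-weighted map minimizes the model-mismatch sensitivity $\Lambda_{\mathcal{U}}$, and both optimality statements collapse onto the same minimal-spectral-norm left-inverse fact after the $\mathbf{K}^{1/2}$ change of variables.
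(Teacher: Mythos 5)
Your proof is correct, and while it shares the paper's Gauss--Markov skeleton (the same orthonormal reduction, and the same decomposition of $A$ as the PBDW coefficient map plus a perturbation annihilated by $\mathbf{L}$), it finishes by a genuinely different mechanism. The paper runs two parallel computations: for $\Lambda_2$ it expands the Gram matrix to get $(\mathbf{L}^T\mathbf{L})^{-1} + \mathbf{D}\mathbf{D}^T$ (the cross terms vanish because $\mathbf{D}\mathbf{L} = 0$) and drops the positive-semidefinite term $\mathbf{D}\mathbf{D}^T$, and for $\Lambda_{\mathcal{U}}$ it repeats the computation with the weight, obtaining $(\mathbf{L}^T\mathbf{K}^{-1}\mathbf{L})^{-1} + \mathbf{E}\mathbf{K}\mathbf{E}^T$. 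You instead isolate a single reusable lemma --- among left-inverses $\mathbf{X}$ of a full-column-rank $\mathbf{M}$, the pseudo-inverse minimizes $s_{\rm max}$ --- proved not by quadratic-form expansion but by a test-vector argument: the maximizer of $\mathbf{M}^+$ can be taken in ${\rm Im}(\mathbf{M})$, where the perturbation $\mathbf{B}$ (whose rows lie in ${\rm Im}(\mathbf{M})^{\perp}$) vanishes; you then obtain the $\Lambda_{\mathcal{U}}$ inequality from the same lemma by whitening, $\tilde{\mathbf{A}} = \mathbf{A}_z\mathbf{K}^{1/2}$, $\tilde{\mathbf{L}} = \mathbf{K}^{-1/2}\mathbf{L}$, correctly matching $\tilde{\mathbf{L}}^+$ with the coefficient map $(\mathbf{L}^T\mathbf{K}^{-1}\mathbf{L})^{-1}\mathbf{L}^T\mathbf{K}^{-1}$ of \eqref{eq:two_stage_xi0} and $\mathbf{L}^+$ with that of \eqref{eq:pbdw_inf}. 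Your approach buys modularity (one lemma, applied twice, with the weighted case reduced to the unweighted one) and avoids the Gram-matrix algebra; the paper's expansion buys slightly more information, namely the Loewner-order domination $(\mathbf{L}^T\mathbf{L})^{-1} + \mathbf{D}\mathbf{D}^T \succeq (\mathbf{L}^T\mathbf{L})^{-1}$, which controls every direction of the quadratic form (the exact analogue of the covariance statement in the classical Gauss--Markov theorem) rather than only the maximal singular value. Two details you handled correctly that deserve note: the identification $\Lambda_{\mathcal{U}}(A) = s_{\rm max}(\mathbf{A}_z\mathbf{K}^{1/2})$ requires the zero-bias hypothesis through the second part of Lemma \ref{th:properties_lebesgue_constants}, and the identification of $\Pi_{\mathcal{Z}_N}A^{\rm pbdw,\xi=0}$ with the weighted least-squares map uses $\hat{\eta}_0 \in \mathcal{Z}_N^{\perp}\cap\mathcal{U}_M$, which holds for $\Phi_N = \mathbb{R}^N$ since otherwise shifting $\Pi_{\mathcal{Z}_N}\hat{\eta}_0$ into the background would strictly reduce $\|\hat{\eta}_0\|$.
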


\subsection{Analysis of nonlinear PBDW: a stability estimate}
\label{sec:nonlinear_PBDW}

We here show that if $\Phi_N$ is convex  the deduced background
$ \hat{\mathbf{z}}_{\xi}$ associated with the (nonlinear) PBDW solution satisfies the stability estimate  $\|  \hat{\mathbf{z}}_{\xi}(\mathbf{y}_1)  -   \hat{\mathbf{z}}_{\xi}(\mathbf{y}_2)  \|_2  \leq C  \| \mathbf{y}_1  - \mathbf{y}_2 \|_2$ for some constant $C$ and for any data $\mathbf{y}_1,  \mathbf{y}_2 \in \mathbb{R}^M$. Since the update is a linear function of the residual  $\mathbf{y} - \mathbf{L}  \hat{\mathbf{z}}_{\xi}$ (cf. Proposition \ref{th:representer_theorem}), this implies that the whole PBDW estimate depends continuously on data.
Towards this end, we recall that  the deduced background  $\hat{\mathbf{z}}_{\xi} \in \mathbb{R}^N$  satisfies
(cf. Proposition \ref{th:representer_theorem})
\begin{equation}
\label{eq:PBDW_once_again}
\hat{\mathbf{z}}_{\xi}  \in   {\rm arg} \min_{\mathbf{z} \in \Phi_N    } 
\, 
\| \mathbf{L}  \mathbf{z} - \mathbf{y}  \|_{\mathbf{W}_{\xi}}^2
\, = \,
{\rm arg} \min_{\mathbf{z} \in \Phi_N    } 
\, 
\frac{1}{2}
\mathbf{z}^T \mathbf{Q}_{\xi} \mathbf{z} + \mathbf{z}^T \mathbf{c}_{\xi},
\quad
{\rm where}
\; 
\left\{
\begin{array}{l}
\mathbf{Q}_{\xi}  = \mathbf{L}^T \mathbf{W}_{\xi} \mathbf{L}, \\[2mm]
\mathbf{c}_{\xi} = - \mathbf{L}^T \mathbf{W}_{\xi} \mathbf{y}.\\
\end{array}
\right.
\end{equation}
Furthermore,  we define the constant 
\begin{equation}
\label{eq:stability_constant_nonlinear}
\Lambda_{\xi}^{\rm nl}(\Phi_N)  = \left(
\min_{  \mathbf{z}_1,\mathbf{z}_2 \in \Phi_N, \,   \mathbf{z}_1 \neq \mathbf{z}_2    }
\,
\frac{\|   \mathbf{z}_1 -  \mathbf{z}_2     \|_{\mathbf{Q}_{\xi}}^2}{\|  \mathbf{z}_1 -  \mathbf{z}_2 \|_2^2}
\right)^{-1}.
\end{equation}
 Next Lemma lists a number of properties of $\Lambda_{\xi}^{\rm nl}(\Phi_N)$.
 
\begin{lemma}
 \label{th:properties_stability_nonlinear}
Let $\{ \zeta_n  \}_{n=1}^N$ be an orthonormal basis of $\mathcal{Z}_N$ and 
$\mathbf{L} \in \mathbb{R}^{M,N}$ be full rank with $M \geq N$. Then, the constant  
 $\Lambda_{\xi}^{\rm nl}(\Phi_N) $   \eqref{eq:stability_constant_nonlinear}
 satisfies the following.
 \begin{enumerate}
 \item
 If $\Phi_N \subset \Phi_N'$, then $\Lambda_{\xi}^{\rm nl}(\Phi_N) \leq \Lambda_{\xi}^{\rm nl}(\Phi_N')$.
 \item
 If $\Phi_N = \mathbb{R}^N$, then 
 $\Lambda_{\xi}^{\rm nl}(\Phi_N)  = 
 \|  \mathbf{Q}_{\xi}^{-1} \|_2 $.
 In particular, for $\xi=0$ (i.e., $\mathbb{Q}_{\xi} = \mathbf{L}^T \mathbf{K}^{-1} \mathbf{L}$), we have
  $\Lambda_{\xi}^{\rm nl}(\Phi_N)  = \Lambda_{\mathcal{U}}(A^{\rm pbdw, \xi=0}) = \frac{1}{\beta_{N,M}}$.
 \end{enumerate}
 \end{lemma}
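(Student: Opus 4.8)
The plan is to handle the two items separately; both reduce to elementary linear algebra once the orthonormality of $\{\zeta_n\}$ is used to pass between $\mathcal{U}$ and $\mathbb{R}^N$.

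\textbf{Item 1.} The quantity inverted in \eqref{eq:stability_constant_nonlinear} is an infimum of the fixed ratio $\|\mathbf{z}_1-\mathbf{z}_2\|_{\mathbf{Q}_{\xi}}^2/\|\mathbf{z}_1-\mathbf{z}_2\|_2^2$ over admissible pairs. If $\Phi_N\subset\Phi_N'$, every pair feasible for $\Phi_N$ is also feasible for $\Phi_N'$, so the minimum over $\Phi_N'$ is no larger than the minimum over $\Phi_N$. Both minima are strictly positive by the positive-definiteness noted below, and $\Lambda_{\xi}^{\rm nl}$ is their reciprocal; taking reciprocals reverses the inequality and yields $\Lambda_{\xi}^{\rm nl}(\Phi_N)\le\Lambda_{\xi}^{\rm nl}(\Phi_N')$. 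This step is immediate.

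\textbf{Item 2, general $\xi$.} The key observation is that the ratio depends on $(\mathbf{z}_1,\mathbf{z}_2)$ only through $\mathbf{w}=\mathbf{z}_1-\mathbf{z}_2$, which ranges over all of $\mathbb{R}^N\setminus\{0\}$ when $\Phi_N=\mathbb{R}^N$. Hence the minimum equals the smallest value of the Rayleigh quotient $\mathbf{w}^T\mathbf{Q}_{\xi}\mathbf{w}/(\mathbf{w}^T\mathbf{w})$, i.e. $\lambda_{\rm min}(\mathbf{Q}_{\xi})$. Since $\mathbf{L}$ is full rank with $M\ge N$ and $\mathbf{W}_{\xi}=(\xi\mathbf{Id}+\mathbf{K})^{-1}$ is symmetric positive definite, the matrix $\mathbf{Q}_{\xi}=\mathbf{L}^T\mathbf{W}_{\xi}\mathbf{L}$ is symmetric positive definite; therefore $\lambda_{\rm min}(\mathbf{Q}_{\xi})>0$ and $\Lambda_{\xi}^{\rm nl}(\mathbb{R}^N)=1/\lambda_{\rm min}(\mathbf{Q}_{\xi})=\lambda_{\rm max}(\mathbf{Q}_{\xi}^{-1})=\|\mathbf{Q}_{\xi}^{-1}\|_2$, the last equality because the spectral norm of an SPD matrix equals its largest eigenvalue.

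\textbf{Item 2, the $\xi=0$ identification, and the main obstacle.} For the final assertion I would exploit orthonormality: for $z=\sum_n z_n\zeta_n$ one has $\|z\|^2=\|\mathbf{z}\|_2^2$ and $\boldsymbol{\ell}(z)=\mathbf{L}\mathbf{z}$. Writing $\Pi_{\mathcal{U}_M}z=\sum_m c_m q_m$ and solving the normal equations $\mathbf{K}\mathbf{c}=\mathbf{L}\mathbf{z}$ gives $\|\Pi_{\mathcal{U}_M}z\|^2=\mathbf{z}^T\mathbf{L}^T\mathbf{K}^{-1}\mathbf{L}\mathbf{z}=\mathbf{z}^T\mathbf{Q}_0\mathbf{z}$, which is exactly the computation already performed in the proof of \eqref{eq:identity_inf_sup}. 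Minimizing $\mathbf{z}^T\mathbf{Q}_0\mathbf{z}/\|\mathbf{z}\|_2^2$ thus identifies $\lambda_{\rm min}(\mathbf{Q}_0)$ with the squared inf--sup constant $\beta_{N,M}^2$, and combining this with the established identity \eqref{eq:identity_inf_sup}, $\Lambda_{\mathcal{U}}(A^{\rm pbdw,\xi=0})=1/\beta_{N,M}$, produces the stated connection between $\Lambda_{0}^{\rm nl}(\mathbb{R}^N)$, $\Lambda_{\mathcal{U}}(A^{\rm pbdw,\xi=0})$ and $\beta_{N,M}$. Neither item is deep — the entire content is the Rayleigh-quotient reduction plus the SPD property of $\mathbf{Q}_{\xi}$ — so the only point demanding care is precisely this last identification: the purely algebraic eigenvalue $\lambda_{\rm min}(\mathbf{Q}_0)$ must be matched to the functional-analytic constant $\beta_{N,M}$ through the orthonormal basis, and one must keep careful track of which quantities are squared, since the Rayleigh quotient naturally produces $\beta_{N,M}^2$ while the quasi-optimality identity \eqref{eq:identity_inf_sup} is stated in terms of $\beta_{N,M}$.
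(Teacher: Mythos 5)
Your item 1 and the identity $\Lambda_{\xi}^{\rm nl}(\mathbb{R}^N)=\|\mathbf{Q}_{\xi}^{-1}\|_2$ are proved exactly as in the paper: the paper also dismisses item 1 by monotonicity of the minimum under set inclusion, and for $\Phi_N=\mathbb{R}^N$ it likewise reduces to the minimal Rayleigh quotient $\lambda_{\rm min}(\mathbf{Q}_{\xi})=\lambda_{\rm min}(\mathbf{L}^T\mathbf{W}_{\xi}\mathbf{L})$ and invokes the spectral characterization of $\|\cdot\|_2$ for symmetric matrices. Up to this point your proposal is complete, and your explicit SPD justification ($\mathbf{L}$ full rank, $\mathbf{W}_{\xi}$ symmetric positive definite) only makes precise what the paper leaves implicit.

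The $\xi=0$ identification is where you genuinely diverge, and where your version is in fact more careful than the paper's. The paper does not redo the projection computation: it cites \cite[Lemma 3.3]{maday2015generalized} to assert $\beta_{N,M}=\lambda_{\rm min}(\mathbf{L}^T\mathbf{K}^{-1}\mathbf{L})$ and then recalls \eqref{eq:identity_inf_sup}. Your direct derivation via the normal equations $\mathbf{K}\mathbf{c}=\mathbf{L}\mathbf{z}$, giving $\|\Pi_{\mathcal{U}_M}z\|^2=\mathbf{z}^T\mathbf{Q}_0\mathbf{z}$ and hence $\lambda_{\rm min}(\mathbf{Q}_0)=\beta_{N,M}^2$, is the correct relation: $\mathbf{Q}_0$ is the Gram matrix of $\mathbf{K}^{-1/2}\mathbf{L}$, whose smallest \emph{singular value} is $\beta_{N,M}$, so the identity quoted in the paper's proof is off by a square. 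Consequently, with definition \eqref{eq:stability_constant_nonlinear} one obtains $\Lambda_{0}^{\rm nl}(\mathbb{R}^N)=1/\lambda_{\rm min}(\mathbf{Q}_0)=1/\beta_{N,M}^2=\Lambda_{\mathcal{U}}(A^{\rm pbdw,\xi=0})^2$, not $1/\beta_{N,M}$; a cross-check is Appendix \ref{sec:appendix_gauss_markov}, where $(\mathbf{A}\mathbf{K}^{1/2})(\mathbf{A}\mathbf{K}^{1/2})^T=(\mathbf{L}^T\mathbf{K}^{-1}\mathbf{L})^{-1}$ yields $\Lambda_{\mathcal{U}}=\lambda_{\rm min}(\mathbf{Q}_0)^{-1/2}$. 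You clearly sensed this tension --- your closing caveat about which quantities are squared is exactly the point --- but your proof then asserts that combining with \eqref{eq:identity_inf_sup} ``produces the stated connection'', which it does not: your (correct) computation produces the squared version, so the final chain of equalities in the lemma holds only after squaring, and the statement as printed inherits a typo from the misquoted eigenvalue identity. Apart from stopping short of drawing that conclusion explicitly, your argument is complete.
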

 \begin{proof}
 The first statement follows directly from the definition of minimum. On the other hand, for $\Phi_N= \mathbb{R}^N$,   $\Lambda_{\xi}^{\rm nl}$ can be rewritten as:
 $$
 \Lambda_{\xi}^{\rm nl}(\mathbb{R}^N)
 \, = \,
 \left(
 \min_{\mathbf{z} \in \mathbb{R}^N} 
 \frac{\| \mathbf{L} \mathbf{z}  \|_{\mathbf{W}_{\xi}}^2 }{  \| \mathbf{z}  \|_2^2  }
  \right)^{-1} \,  = \,
 \frac{1}{\lambda_{\rm min}(\mathbf{L}^T \mathbf{W}_{\xi}  \mathbf{L})}
 =
  \|  \mathbf{Q}_{\xi}^{-1} \|_2,
  $$
where in the second identity we exploited the relationship between eigenvalues of symmetric matrices and minimum Rayleigh quotients, and in the third identity we used a standard property of the $\| \cdot \|_2$ norm  of symmetric matrices.
Finally, for $\xi=0$ exploiting \cite[Lemma 3.3]{maday2015generalized}  and the fact that $\{ \zeta_n  \}_{n=1}^N$ is an orthonormal basis,
we find that 
  $\beta_{N,M} = \lambda_{\rm min}(\mathbf{L}^T \mathbf{K}^{-1}  \mathbf{L})$. Thesis then follows recalling \eqref{eq:identity_inf_sup}.
 \end{proof}

Next Proposition motivates the definition of $ \Lambda_{\xi}^{\rm nl}$.

\begin{proposition}
\label{th:nonlinear_stability}
Let $\Phi_N$ be convex, and let the hypotheses of Lemma \ref{th:properties_stability_nonlinear} hold. Given $\mathbf{y}_1,\mathbf{y}_2 \in \mathbb{R}^M$, 
we denote by   $\hat{\mathbf{z}}_{\xi,i}$ the solution to \eqref{eq:PBDW_once_again} for  $\mathbf{y} = \mathbf{y}_i$, for $i=1,2$.
Then, we have
\begin{equation}
\label{eq:stability_estimate_nonlinear}
\|  \hat{\mathbf{z}}_{\xi,2}  - \hat{\mathbf{z}}_{\xi,1}     \|_2
\leq
\|  \mathbf{L}^T \mathbf{W}_{\xi}  \|_2 \, 
\Lambda_{\xi}^{\rm nl}(\Phi_N) \, \|  \mathbf{y}_1 - \mathbf{y}_2  \|_2,
\end{equation}
where $\Lambda_{\xi}^{\rm nl}$ is defined in \eqref{eq:stability_constant_nonlinear}.
\end{proposition}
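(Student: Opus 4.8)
The plan is to exploit the first-order optimality conditions for the two convex quadratic programs together with the strong-convexity information encoded in $\Lambda_{\xi}^{\rm nl}(\Phi_N)$. Since $\Phi_N$ is convex and the objective $\mathbf{z} \mapsto \tfrac12 \mathbf{z}^T \mathbf{Q}_{\xi} \mathbf{z} + \mathbf{z}^T \mathbf{c}_{\xi,i}$ is convex and differentiable — recall that $\mathbf{Q}_{\xi} = \mathbf{L}^T \mathbf{W}_{\xi} \mathbf{L}$ is symmetric positive definite because $\mathbf{L}$ is full rank and $\mathbf{W}_{\xi}$ is SPD, which in turn guarantees that each minimizer $\hat{\mathbf{z}}_{\xi,i}$ exists and is unique — the solution $\hat{\mathbf{z}}_{\xi,i}$ of \eqref{eq:PBDW_once_again} for $\mathbf{y} = \mathbf{y}_i$ is characterized by the variational inequality
\[
(\mathbf{Q}_{\xi} \hat{\mathbf{z}}_{\xi,i} + \mathbf{c}_{\xi,i})^T (\mathbf{z} - \hat{\mathbf{z}}_{\xi,i}) \geq 0 \qquad \forall \, \mathbf{z} \in \Phi_N, \quad i=1,2,
\]
where $\mathbf{c}_{\xi,i} = - \mathbf{L}^T \mathbf{W}_{\xi} \mathbf{y}_i$.

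Next I would test the inequality for $i=1$ with the admissible choice $\mathbf{z} = \hat{\mathbf{z}}_{\xi,2}$ and the inequality for $i=2$ with $\mathbf{z} = \hat{\mathbf{z}}_{\xi,1}$ (both are admissible since both iterates lie in $\Phi_N$). Writing $\delta \mathbf{z} := \hat{\mathbf{z}}_{\xi,2} - \hat{\mathbf{z}}_{\xi,1}$ and adding the two resulting inequalities, the linear contributions cancel in such a way that one obtains the monotonicity relation
\[
\delta \mathbf{z}^T \mathbf{Q}_{\xi} \delta \mathbf{z}
\;\leq\;
(\mathbf{c}_{\xi,1} - \mathbf{c}_{\xi,2})^T \delta \mathbf{z}
\;=\;
\big( \mathbf{L}^T \mathbf{W}_{\xi} (\mathbf{y}_2 - \mathbf{y}_1) \big)^T \delta \mathbf{z}.
\]
This is the classical argument for Lipschitz continuity of the solution map of a monotone variational inequality, and it is the heart of the proof.

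Finally, assuming $\delta \mathbf{z} \neq 0$ (otherwise \eqref{eq:stability_estimate_nonlinear} holds trivially), I would lower-bound the left-hand side by the definition of $\Lambda_{\xi}^{\rm nl}(\Phi_N)$ in \eqref{eq:stability_constant_nonlinear}, namely $\| \delta \mathbf{z} \|_{\mathbf{Q}_{\xi}}^2 \geq \Lambda_{\xi}^{\rm nl}(\Phi_N)^{-1} \| \delta \mathbf{z} \|_2^2$, which is licit because $\hat{\mathbf{z}}_{\xi,1}$ and $\hat{\mathbf{z}}_{\xi,2}$ are two distinct points of $\Phi_N$; and I would upper-bound the right-hand side by Cauchy--Schwarz together with submultiplicativity of the operator norm, giving $\big(\mathbf{L}^T \mathbf{W}_{\xi} (\mathbf{y}_2 - \mathbf{y}_1)\big)^T \delta \mathbf{z} \leq \| \mathbf{L}^T \mathbf{W}_{\xi} \|_2 \, \| \mathbf{y}_1 - \mathbf{y}_2 \|_2 \, \| \delta \mathbf{z} \|_2$. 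Combining the two bounds and cancelling one factor of $\| \delta \mathbf{z} \|_2$ yields exactly \eqref{eq:stability_estimate_nonlinear}. The only genuinely delicate points are the well-posedness and uniqueness of the minimizers (so that $\hat{\mathbf{z}}_{\xi,i}$, and hence $\delta \mathbf{z}$, are well-defined) and the applicability of the definition of $\Lambda_{\xi}^{\rm nl}$ — that is, that the minimum in \eqref{eq:stability_constant_nonlinear} is attained at a strictly positive value; both follow from the positive definiteness of $\mathbf{Q}_{\xi}$ guaranteed by the hypotheses of Lemma \ref{th:properties_stability_nonlinear}.
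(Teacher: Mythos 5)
Your proposal is correct and is essentially identical to the paper's proof: the paper also invokes the variational inequality characterization of constrained convex minimizers (its Lemma 3.4, quoted from the inverse-problems literature), tests each inequality with the other solution, sums to get $\|\hat{\mathbf{z}}_{\xi,1}-\hat{\mathbf{z}}_{\xi,2}\|_{\mathbf{Q}_{\xi}}^2 \leq (\mathbf{c}_{\xi,1}-\mathbf{c}_{\xi,2},\,\hat{\mathbf{z}}_{\xi,2}-\hat{\mathbf{z}}_{\xi,1})_2$, and concludes via the definition of $\Lambda_{\xi}^{\rm nl}(\Phi_N)$ and Cauchy--Schwarz with $\|\mathbf{c}_{\xi,1}-\mathbf{c}_{\xi,2}\|_2 \leq \|\mathbf{L}^T\mathbf{W}_{\xi}\|_2\,\|\mathbf{y}_1-\mathbf{y}_2\|_2$. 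Your added remarks on existence/uniqueness (positive definiteness of $\mathbf{Q}_{\xi}$ from $\mathbf{L}$ full rank and $\mathbf{W}_{\xi}$ SPD) and on the positivity of the Rayleigh-quotient minimum are sound and only make explicit what the paper leaves implicit.
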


In view of the proof, we state the following standard result 
(see, e.g., \cite[Lemma 5.13]{engl1996regularization}).

\begin{lemma}
\label{th:lemma_nonlinear}
Let $f: \mathbb{R}^N \to \mathbb{R}$ be  convex and differentiable with gradient $\nabla f$, and let $\Phi_N \subset \mathbb{R}^N$ be a closed convex set. Then, 
$$
\mathbf{z}^{\star} \in {\rm arg } \min_{\mathbf{z} \in \Phi_N} f(\mathbf{z}) \; \Leftrightarrow \;
\left(
\nabla f(\mathbf{z}^{\star} ), \, \mathbf{h} - \mathbf{z}^{\star}  
\right)_2 \geq 0
\; \; \forall \, \mathbf{h} \in \Phi_N
$$
where $(\cdot, \cdot)_2$ denotes the Euclidean inner product.
\end{lemma}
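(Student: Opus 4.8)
The plan is to prove both implications of the equivalence using two complementary consequences of convexity: convexity of the feasible set $\Phi_N$ drives the necessity direction, and convexity of the objective $f$ drives the sufficiency direction. Differentiability of $f$ is used in both.

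For the forward implication ($\Rightarrow$), I would fix an arbitrary $\mathbf{h} \in \Phi_N$ and exploit convexity of $\Phi_N$ to conclude that the segment $\mathbf{z}^{\star} + t(\mathbf{h} - \mathbf{z}^{\star})$ belongs to $\Phi_N$ for every $t \in [0,1]$. Introducing the scalar function $g(t) := f(\mathbf{z}^{\star} + t(\mathbf{h} - \mathbf{z}^{\star}))$, the optimality of $\mathbf{z}^{\star}$ over $\Phi_N$ gives $g(t) \geq g(0)$ for all $t \in [0,1]$, so the right-hand derivative of $g$ at $t = 0$ is nonnegative. Computing this derivative via the chain rule (licit because $f$ is differentiable) yields $g'(0^+) = (\nabla f(\mathbf{z}^{\star}), \mathbf{h} - \mathbf{z}^{\star})_2 \geq 0$, which is precisely the claimed variational inequality. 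Since $\mathbf{h} \in \Phi_N$ was arbitrary, this direction is complete.

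For the reverse implication ($\Leftarrow$), I would invoke the standard first-order characterization of a differentiable convex function, namely $f(\mathbf{h}) \geq f(\mathbf{z}^{\star}) + (\nabla f(\mathbf{z}^{\star}), \mathbf{h} - \mathbf{z}^{\star})_2$ for all $\mathbf{h} \in \mathbb{R}^N$. Restricting to $\mathbf{h} \in \Phi_N$ and using the assumed inequality $(\nabla f(\mathbf{z}^{\star}), \mathbf{h} - \mathbf{z}^{\star})_2 \geq 0$ immediately gives $f(\mathbf{h}) \geq f(\mathbf{z}^{\star})$ for every $\mathbf{h} \in \Phi_N$, so $\mathbf{z}^{\star}$ is a global minimizer of $f$ over $\Phi_N$.

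The argument is essentially routine and there is no serious obstacle; the two points that require care are (i) using convexity of $\Phi_N$ so that the line segment stays feasible --- it is exactly here that the hypothesis on $\Phi_N$ enters, and without it the one-sided derivative argument breaks down --- and (ii) keeping track of the fact that the necessity direction only produces a \emph{one-sided} (right) derivative, since feasibility of the segment is guaranteed only for $t \geq 0$. In short, convexity of the domain powers the necessity while convexity of the objective powers the sufficiency, and the differentiability of $f$ ties the directional derivative to the gradient on both sides.
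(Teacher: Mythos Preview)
Your argument is correct and is the standard textbook proof of this variational-inequality characterization. Note, however, that the paper does not actually prove this lemma: it merely states it as a ``standard result'' and cites \cite[Lemma 5.13]{engl1996regularization}. So there is no paper proof to compare against; your write-up supplies the details that the paper omits, and it does so accurately.
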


\begin{proof}
(Proposition \ref{th:nonlinear_stability}).
For $i=1,2$, problem \eqref{eq:PBDW_once_again} for $\mathbf{y} = \mathbf{y}_i$ can 
be restated as
$$
\min_{\mathbf{z} \in \mathbb{R}^N} \;
f_i( \mathbf{z}) :=
\frac{1}{2} \mathbf{z}^T \mathbf{Q}_{\xi} \mathbf{z}
+ 
\mathbf{z}^T \mathbf{c}_{\xi,i},
\qquad
\mathbf{Q}_{\xi}  = \mathbf{L}^T \mathbf{W}_{\xi} \mathbf{L},
\; \;
\mathbf{c}_{\xi,i} = - \mathbf{L}^T \mathbf{W}_{\xi} \mathbf{y}_i.
$$
Exploiting Lemma \ref{th:lemma_nonlinear}, we find for any $\mathbf{h}_1, \mathbf{h}_2 \in \Phi_N$
$$
\left(
\mathbf{Q}_{\xi} \hat{\mathbf{z}}_{\xi,i} + \mathbf{c}_{\xi,i} , \; \mathbf{h}_i -  \hat{\mathbf{z}}_{\xi,i}    \right)_2 \geq 0,
\qquad
i=1,2.
$$
If we consider $\mathbf{h}_1 = \hat{\mathbf{z}}_{\xi,2}$  and  $\mathbf{h}_2 = \hat{\mathbf{z}}_{\xi,1}$ and we sum the two inequalities, we obtain
$$
\begin{array}{l}
\displaystyle{
\left(
\mathbf{Q}_{\xi} (\hat{\mathbf{z}}_{\xi,1} -  \hat{\mathbf{z}}_{\xi,2}) \, + \,  \mathbf{c}_{\xi,1} -   \mathbf{c}_{\xi,2}, \; 
\hat{\mathbf{z}}_{\xi,2} -  \hat{\mathbf{z}}_{\xi,1}
\right)_2 \geq 0
}
\\[3mm]
\displaystyle{
\Rightarrow \, 
\|  \hat{\mathbf{z}}_{\xi,1} -  \hat{\mathbf{z}}_{\xi,2}  \|_{\mathbf{Q}_{\xi}}^2
\leq
\left(
 \mathbf{c}_{\xi,1} -   \mathbf{c}_{\xi,2},
 \hat{\mathbf{z}}_{\xi,2} -  \hat{\mathbf{z}}_{\xi,1}
\right)_2
\leq
\|   \mathbf{c}_{\xi,1} -   \mathbf{c}_{\xi,2} \|_2 
\|    \hat{\mathbf{z}}_{\xi,2} -  \hat{\mathbf{z}}_{\xi,1}  \|_2
}
\\[3mm]
\displaystyle{
\Rightarrow \, 
\|  \hat{\mathbf{z}}_{\xi,1} -  \hat{\mathbf{z}}_{\xi,2}  \|_2
\leq
\Lambda_{\xi}^{\rm nl}(\Phi_N) \;
\|   \mathbf{c}_{\xi,1} -   \mathbf{c}_{\xi,2} \|_2 
\leq \;
\Lambda_{\xi}^{\rm nl}(\Phi_N) \,
\|   \mathbf{L}^T  \,   \mathbf{W}_{\xi}  \|_2 
\,
\|   \mathbf{y}_{1} -   \mathbf{y}_{2} \|_2 
}
\\[3mm]
\end{array}
$$
 which is the thesis.
\end{proof}

\begin{remark}
\label{remark_linear_nonlinear}
\textbf{Comparison with  linear PBDW.}
Recalling the properties of the constant $\Lambda_{\xi}^{\rm nl}$ (cf. Lemma \ref{th:lemma_nonlinear}), estimate \eqref{eq:stability_estimate_nonlinear} suggests
that considering $\Phi_N \subsetneq \mathbb{R}^N$ reduces the sensitivity of \eqref{eq:PBDW_once_again} to perturbations in the data. However, if we restrict the bound \eqref{eq:stability_estimate_nonlinear} to linear algorithms, we find
$$
\|  \hat{\mathbf{z}}_{\xi,2}  - \hat{\mathbf{z}}_{\xi,1}     \|_2
\leq
\| \left( \mathbf{L}^T \mathbf{W}_{\xi} \mathbf{L} \right)^{-1}  \|_2 \, 
\|  \mathbf{L}^T \mathbf{W}_{\xi}   \|_2 \,
\|  \mathbf{y}_1 - \mathbf{y}_2  \|_2,
$$
which is clearly suboptimal compared to the optimal  bound\footnote{The optimal  bound can be trivially derived using
\eqref{eq:algebraic_pbdw_std_a}.
}
$$
\|  \hat{\mathbf{z}}_{\xi,2}  - \hat{\mathbf{z}}_{\xi,1}     \|_2
\leq
\| \left( \mathbf{L}^T \mathbf{W}_{\xi}  \mathbf{L} \right)^{-1}  \, \mathbf{L}^T \mathbf{W}_{\xi}  \|_2 \, 
\|  \mathbf{y}_1 - \mathbf{y}_2  \|_2.
$$
Although we cannot rigorously prove that reducing the admissible set $\Phi_N$ for $\hat{\mathbf{z}}_{\xi}$ always improves the stability of the formulation, several numerical results presented in the next section confirm this intuition, and ultimately motivate the nonlinear approach.
\end{remark}

%&&&&&&&&&&&&&&&&&&&&&&&&&&&&&&&&&&

\subsection{Selection of the observation functionals}
\label{sec:DOE}
In  \cite{maday2015pbdw},  a greedy algorithm called SGreedy  was proposed  to adaptively select the observation centers\footnote{
As explained in the original paper, SGreedy can also be used to choose $\ell_1^o,\ldots,\ell_M^o$ from a dictionary of available functionals
$\mathcal{L}  \subset \mathcal{U}'$.
}
 $\{ x_m  \}_{m=1}^M \subset \Omega$ for functionals of the form 
$$
\ell_m (v) = C_m \int_{\Omega} \, \omega(\| x -  x_m   \|_2) \, v(x) \, dx,
$$
where $\omega: \mathbb{R}_+ \to \mathbb{R}_+$ is a suitable convolutional kernel associated with the physical transducer.  A convergence analysis of the algorithm can be found in reference \cite{binev2018greedy}, which is a general study on greedy algorithms for the optimal sensor placement using reduced models.

SGreedy aims at maximizing the inf-sup constant $\beta_{N,M}$: recalling \eqref{eq:identity_inf_sup}, maximizing  $\beta_{N,M}$ is equivalent to minimizing  $\Lambda_{\mathcal{U}}$ for $\xi = 0$. In  \cite{taddei2017model,maday2017adaptive}, a variant of the SGreedy algorithm is proposed: first, the SGreedy routine is executed until $\beta_{N,M}$  exceeds a certain  threshold, then the remaining points are chosen to minimize the fill distance $h_M = \sup_{x \in \Omega} \, \min_{1\leq m\leq M} \, \| x - x_m \|_2$, which is empirically found to be correlated with $\Lambda_2$.

While for perfect measurements  and linear PBDW these Greedy routines are mathematically sounding and have been successfully tested, their performance for noisy measurement and nonlinear PBDW has not been fully investigated yet. In section \ref{sec:numerics}, we present numerical results for two model problems. Our numerical results suggest that SGreedy  is effective --- if compared to standard \emph{a priori} selections  --- also in presence of noise for linear methods; on the other hand, the introduction of box constraints reduces the sensitivity of the method to measurement locations.

\section{Numerical results}
\label{sec:numerics}
\subsection{A two-dimensional  problem}
\label{sec:2D_numerics}

\subsubsection{Problem statement}
We first investigate the performance of PBDW using the following two-dimensional advection-diffusion  model problem:
\begin{equation}
\label{eq:2D_model_problem}
\left\{
\begin{array}{ll}
\displaystyle{-\Delta  u_g(\mu) + \mathbf{b}(\mu) \cdot \nabla \, u_g(\mu)   \, = \, 
x_1 \, x_2
\, + \, g_1
} &
\displaystyle{ x \in \Omega := (0,1)^2 }
\\[3mm]
u_g(\mu) =  4 \, x_2 (1 - x_2) \, (1 + g_2)
&
\displaystyle{ x \in \Gamma := \{ 0 \} \times (0,1) }
\\[3mm]
\partial_n u_g(\mu ) = 0
&
\displaystyle{ x \in \partial \Omega \setminus \Gamma  }
\\ 
\end{array}
\right.
\end{equation}
where $\mathbf{b}(\mu) = \mu_1 \left[ \cos(\mu_2), \sin(\mu_2) \right]$ with 
$\mu = [\mu_1,\mu_2] \in \mathcal{P}^{\rm bk} = [0.1,10]\times [0,\pi/4]$. 
We here define the bk manifold as
$$
\mathcal{M}^{\rm bk}:= 
\left\{
u^{\rm bk}(\mu):= u_{g=0} (\mu) : \quad
\mu \in \mathcal{P}^{\rm bk}
\right\}
$$
and we consider 
$$
u^{\rm true} = u_{g= (\bar{g}_1,\bar{g}_2)}(\mu^{\rm true})
\qquad
{\rm for \, some} \; \mu^{\rm true} \in \mathcal{P}^{\rm bk},
\quad
\left\{
\begin{array}{l}
\bar{g}_1(x) = 0.2 x_1^2 \\
\bar{g}_2(x) = 0.1 \sin (2\pi x_2 ). \\
\end{array}
\right.
$$
{The lack of knowledge of } the value of $\mu^{\rm true}$ constitutes the anticipated parametric 
{ ignorance}
 in the model, while uncertainty in $g$ constitutes the unanticipated  non-parametric {  ignorance}.

We consider Gaussian observation functionals with standard deviation $r_{\rm w}=0.01$:
\begin{equation}
\label{eq:exp_observations}
\ell_m(v) =
\ell \left(
v; x_m, \, r_{\rm w}
\right)
=
 \, C(x_m) \, \int_{\Omega} \; {\rm exp} 
\left(
-\frac{1}{2 r_{\rm w}^2} \| x  - x_m  \|_2^2
\right) \,
v(x) \; dx
\end{equation}
where $C(x_m)$ is a normalization constant such that $\ell_m(1) = 1$.
To assess performance for imperfect observations, we pollute the measurements by adding 
Gaussian homoscedastic random disturbances $\epsilon_1,\ldots,\epsilon_M$:
$$
y_m = \ell_m( u^{\rm true} ) + \epsilon_m,
\quad
{\rm where} \; \;
\epsilon_m \overset{\rm iid}{\sim} \mathcal{N}(0, \sigma^2), 
\; \; 
\sigma = \frac{1}{\rm SNR} \; {\rm std} \left( \{ 
\ell \left( u^{\rm true}; \tilde{x}_j, \, r_{\rm w} \right)
  \}_{j=1}^{100}  \right),
$$
for given signal-to-noise ratio ${\rm SNR}>0$ and uniformly-randomly chosen  observation points
$\{  \tilde{x}_j \}_j \subset  \Omega$.

We define the ambient space $\mathcal{U} = H^1(\Omega)$ endowed with the inner product 
$$
(u,v) = \int_{\Omega} \,  \nabla  u \, \cdot \, \nabla  v \, +  \, u  \, v \, dx.
$$
Then, we generate the background space $\mathcal{Z}_N$ using Proper Orthogonal Decomposition (POD, \cite{volkwein2011model})
based on the $\mathcal{U}$ inner product: we compute the solution to \eqref{eq:2D_model_problem} for $g= 0$ for $n_{\rm train} = 10^3$ choices of the parameters $\{ \mu^i \}_{i=1}^{n_{\rm train}}$ in $\mathcal{P}^{\rm bk}$, then we use POD to build the background expansion $\{ \zeta_n \}_{n=1}^N$. 
Furthermore, in view of the application of nonlinear PBDW ($\Phi_N \subsetneq \mathbb{R}^N$), we set
$$
a_n := \min_{i=1,\ldots,n_{\rm train}} \left( u^{\rm bk}(\mu^i), \, \zeta_n \right),
\quad
b_n := \max_{i=1,\ldots,n_{\rm train}} \left( u^{\rm bk}(\mu^i), \, \zeta_n \right),
\quad
n=1,\ldots,N.
$$
{The property of the POD construction (i.e., the decay rate of the POD eigenvalues) gives some intuition of the fact that these bounds encode some valuable information.}

\subsubsection{Results}

\subsubsection*{Linear    PBDW}

Figure \ref{fig:constants_2D_xi} shows the behavior of $\Lambda_2$, $\Lambda_{\mathcal{U}}$ and $\Lambda_{\mathcal{U}}^{\rm bias}$ with respect to the hyper-parameter $\xi>0$, for  several values of $M$ and $N$. As anticipated in section \ref{sec:analysis}, 
 $\Lambda_2$  is monotonic decreasing in $\xi$  and increases as $N$ increases;
 $\Lambda_{\mathcal{U}}$ is monotonic increasing in $\xi$ and $N$, and decreases as $M$ increases;
 finally, $\Lambda_{\mathcal{U}}^{\rm bias}$ is monotonic increasing in $\xi$, and weakly depends on $M$ and $N$.
 We remark that we observed the same qualitative behavior for 
several other choices of $M,N$. 

% \label{fig:constants_2D_xi}
\begin{figure}[h!]
\centering
 \subfloat[] {\includegraphics[width=0.3\textwidth]
 {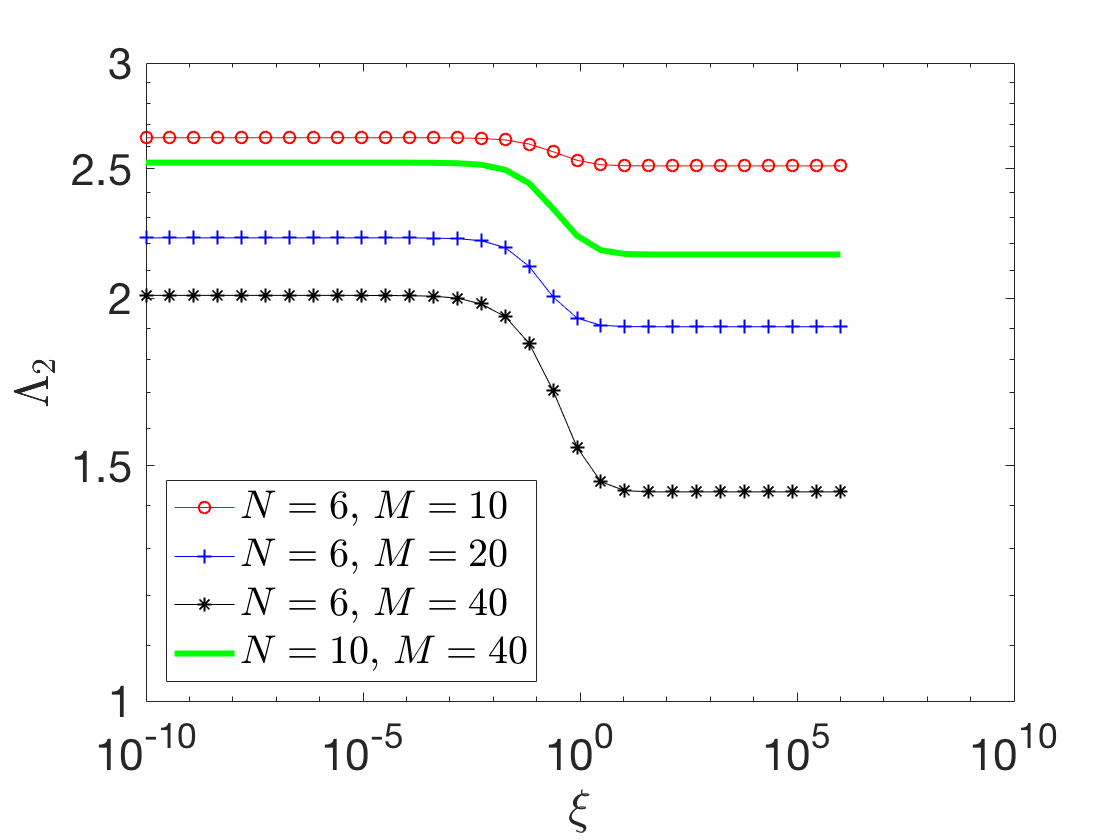}}  
    ~ ~
\subfloat[] {\includegraphics[width=0.3\textwidth]
 {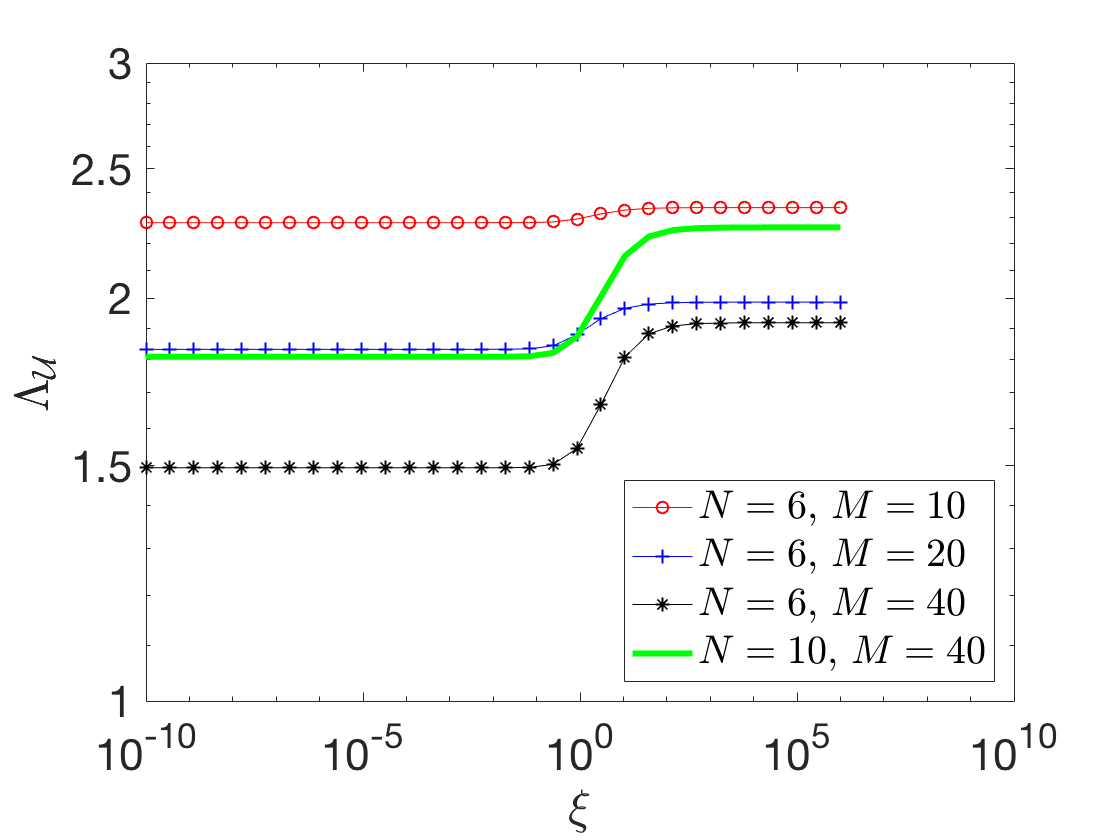}}  
  ~ ~
\subfloat[] {\includegraphics[width=0.3\textwidth]
 {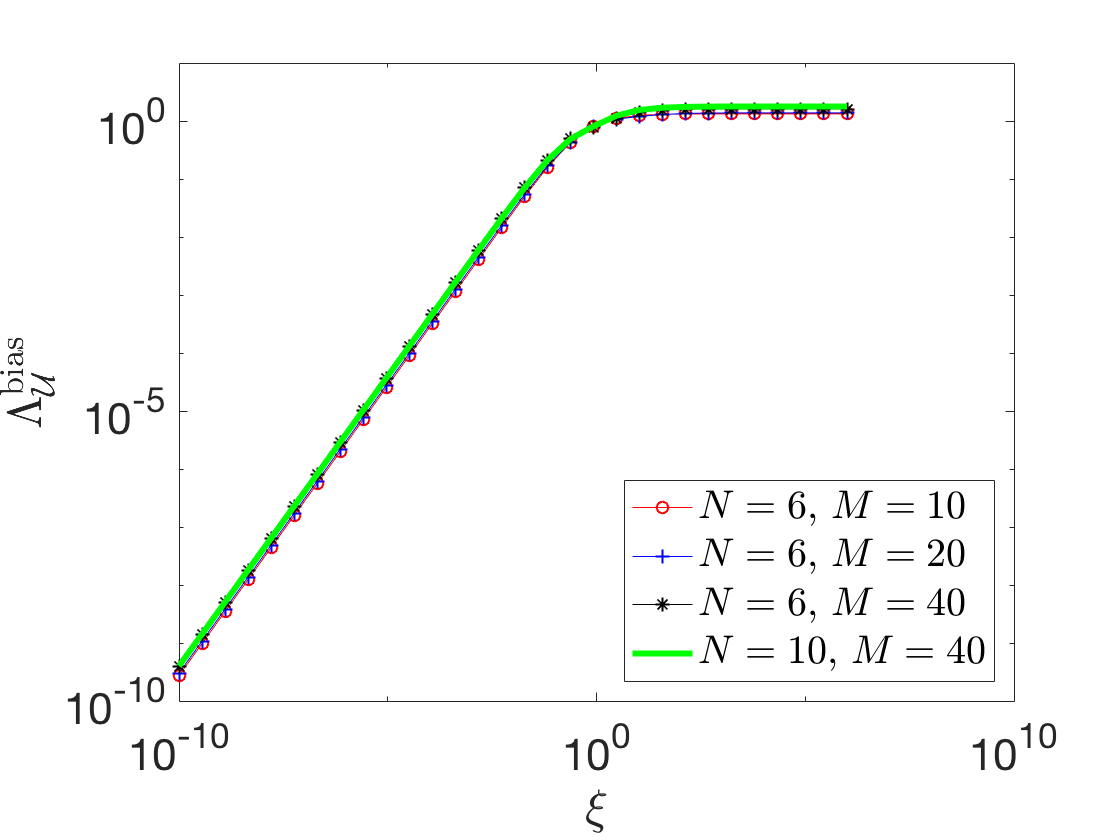}}
 
\caption{two-dimensional problem.
Behavior of $\Lambda_2$, $\Lambda_{\mathcal{U}}$ and $\Lambda_{\mathcal{U}}^{\rm bias}$ for several choices of $N$ and  $M$.
}
 \label{fig:constants_2D_xi}
\end{figure}

Figure \ref{fig:err_2D_unconstrained_xi}   shows the behavior of the average relative error
\begin{equation}
\label{eq:Eavg}
E_{\rm avg}(\xi) = 
\frac{1}{K \times n_{\rm test}} \, \sum_{j=1}^{n_{\rm test}} \,
 \sum_{k=1}^K \,\frac{\| u^{\rm true}(\mu^j)  - \hat{u}_{\xi}^{(k,j)} \|_{L^2(\Omega)}}{ 
\| u^{\rm true}    \|_{L^2(\Omega)}
},
\quad
\mu^j \overset{\rm iid}{\sim} 
{\rm Uniform}(\mathcal{P}^{\rm bk}),
\end{equation}
where 
$K=50$, $n_{\rm test} = 10$, and 
$\hat{u}_{\xi}^{(k,j)}$ is the (linear)  PBDW estimate associated with the $k$-th realization of the random disturbance $\boldsymbol{\epsilon}$, and the $j$-th true field  $u^{\rm true}(\mu^j)$ considered.
To compute $u^{\rm true}(\mu^j)$,  
we both consider  the solution to \eqref{eq:2D_model_problem} for 
$g \equiv 0$ (unbiased) and  $g \neq 0$ (biased).
We further consider two different signal-to-noise levels, 
${\rm SNR}= \infty$, ${\rm SNR}= 10$:
 the choice ${\rm SNR} = \infty$ corresponds to the case of perfect measurements. 
As expected, for perfect measurements, the optimal  value of $\xi$ is equal to zero, while for noisy measurements   optimal performance  is achieved for $\xi = \infty$ in the case of unbiased model, and for $\xi \in (0,\infty)$ in presence  of bias.
These results are in good qualitative agreements with the discussion in section \ref{sec:analysis}, and with the  results in 
\cite{taddei2017adaptive,maday2017adaptive}.

% \label{fig:err_2D_unconstrained_xi}
\begin{figure}[h!]
\centering
\subfloat[unbiased, ${\rm SNR} = \infty$] {\includegraphics[width=0.3\textwidth]
 {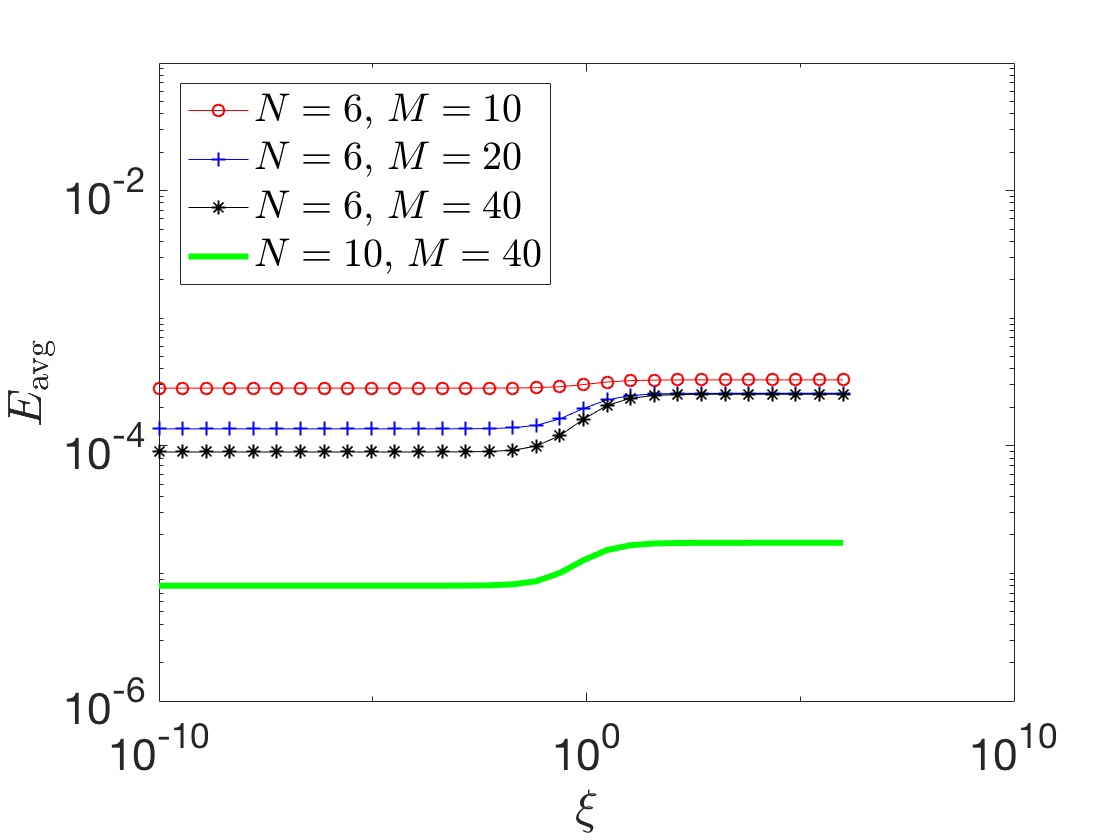}}  
  ~  
\subfloat[biased, ${\rm SNR} = \infty$] {\includegraphics[width=0.3\textwidth]
 {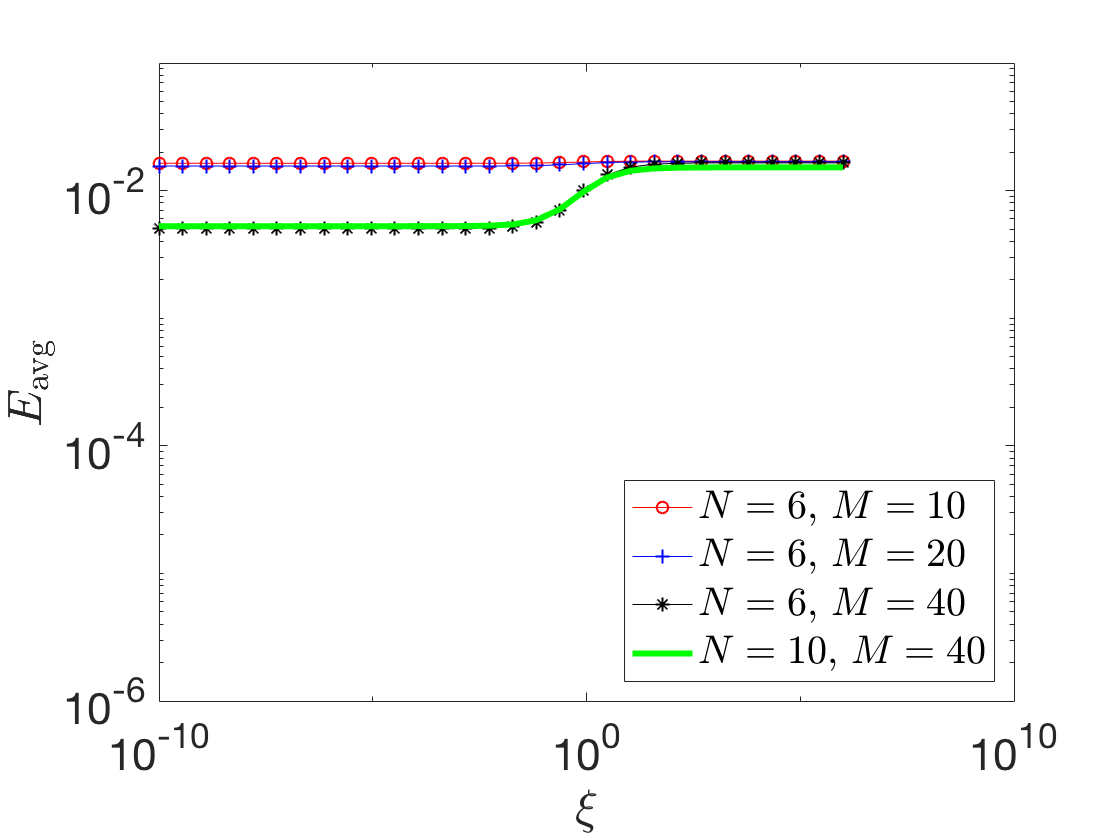}}

 \subfloat[unbiased, ${\rm SNR} = 10$] {\includegraphics[width=0.3\textwidth]
 {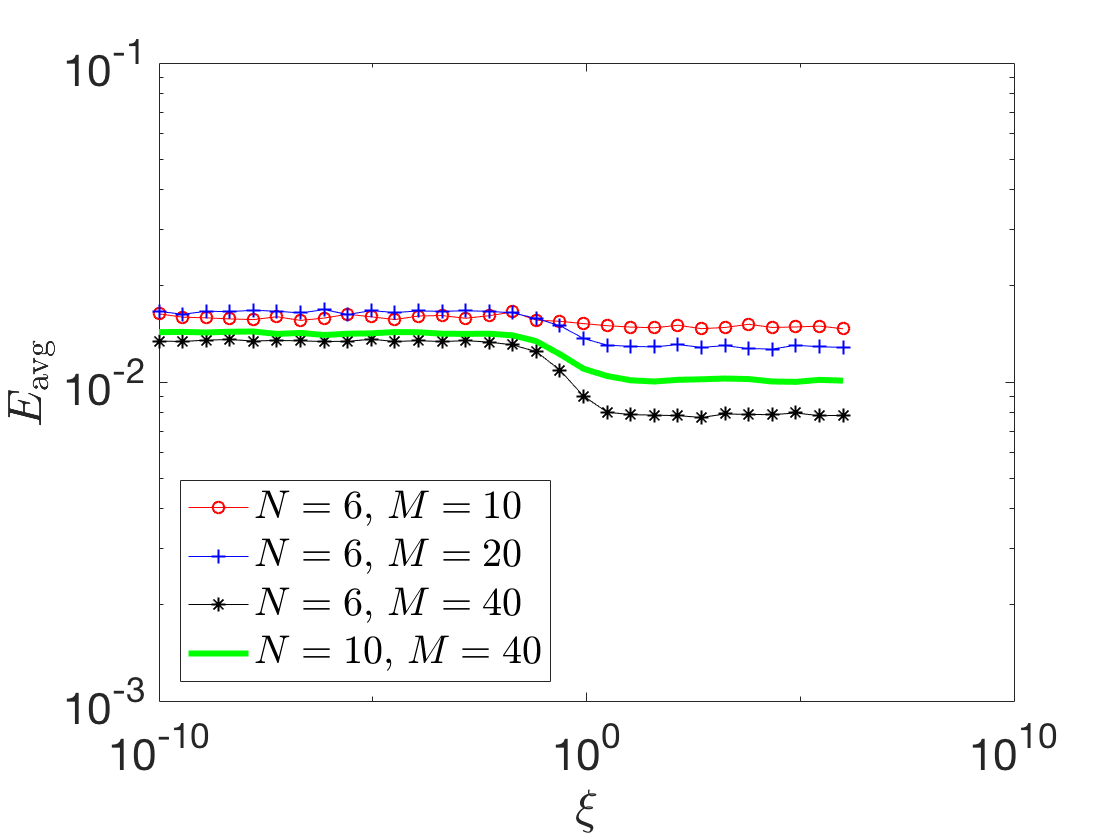}}  
  ~  
\subfloat[biased, ${\rm SNR} = 10$] {\includegraphics[width=0.3\textwidth]
 {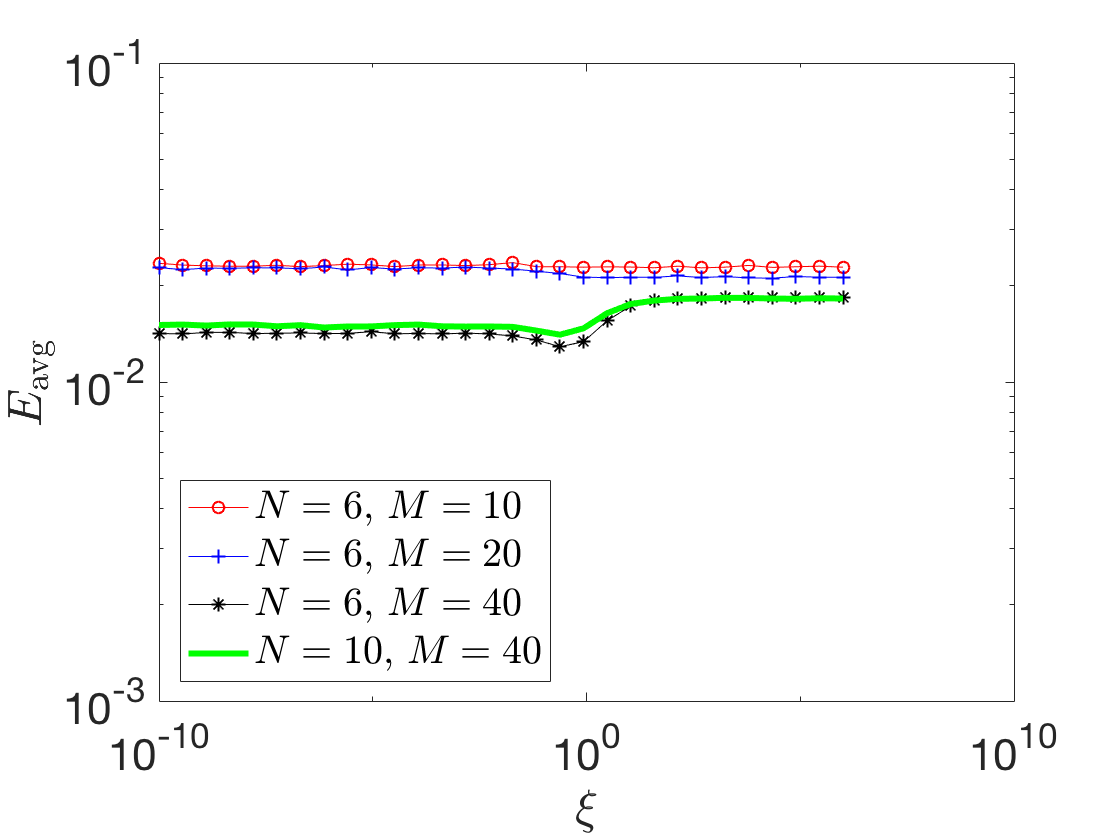}}
 
\caption{two-dimensional problem. Behavior of  $E_{\rm avg}$ for several values of $\xi$ and three choices of $N,M$, for linear PBDW
($\Phi_N = \mathbb{R}^N$).
}
 \label{fig:err_2D_unconstrained_xi}
\end{figure} 

Figure \ref{fig:greedy_2D}(a) shows the first $M=N+5$ points selected by the SGreedy-procedure for $N=5$,
while Figures \ref{fig:greedy_2D}(b) and (c) show
the behavior of the stability constants $\Lambda_{\mathcal{U}}$ and $\Lambda_2$ for different choices of the observation centers and $\xi = 0$.
Figures \ref{fig:greedy_2D}(d)-(e)-(f) show analogous results for $N=15$.
We observe that
for small values of $M$  the SGreedy procedure reduces the constants  $\Lambda_{\mathcal{U}}$ and $\Lambda_2$   compared to a equispaced grid of  observations  and to a grid associated with Gaussian quadrature nodes. We further observe that the application of the variant SGreedy + approximation (with threshold $tol = 0.4$) leads to more moderate values of $\Lambda_2$ compared to the plain SGreedy, at the price of a slight deterioration in $\Lambda_{\mathcal{U}}$.

% \label{fig:greedy_2D}
\begin{figure}[h!]
\centering
\subfloat[$N=5$] {\includegraphics[width=0.3\textwidth]
 {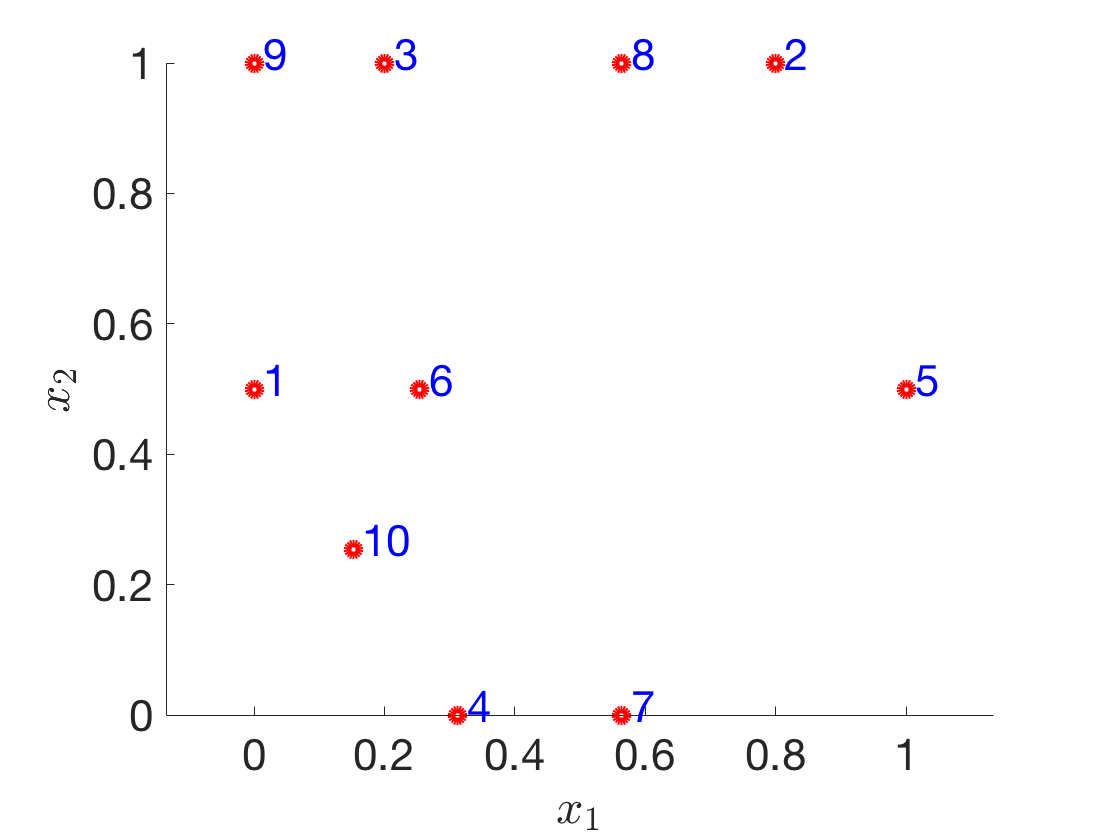}}  
  ~ ~
\subfloat[$N=5$ ] {\includegraphics[width=0.3\textwidth]
 {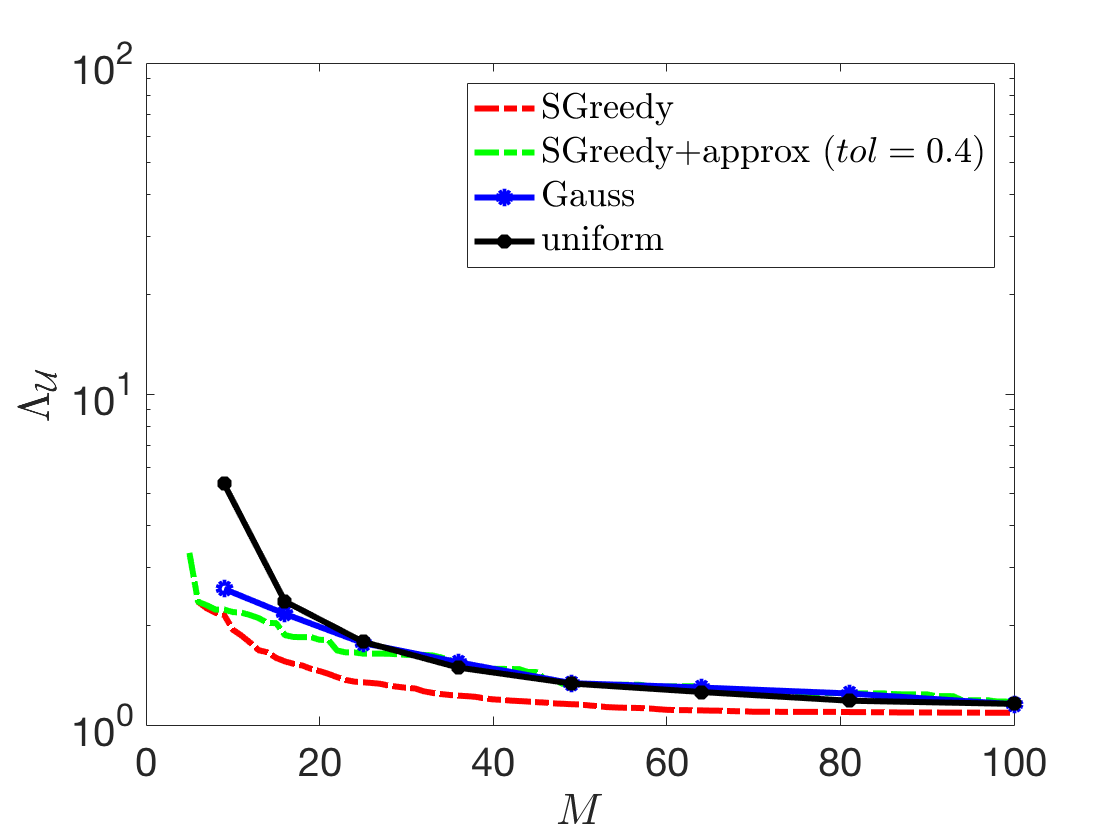}}
   ~ ~
\subfloat[$N=5$ ] {\includegraphics[width=0.3\textwidth]
 {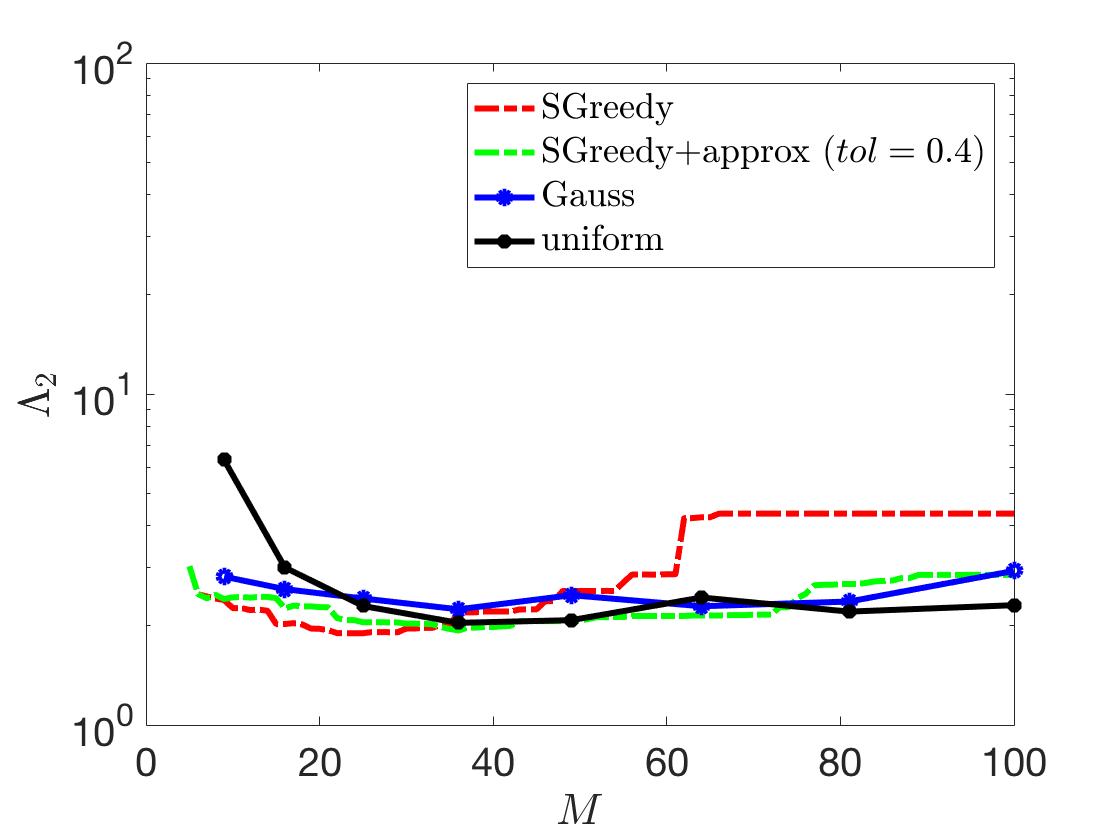}}
 
\subfloat[$N=15$] {\includegraphics[width=0.3\textwidth]
 {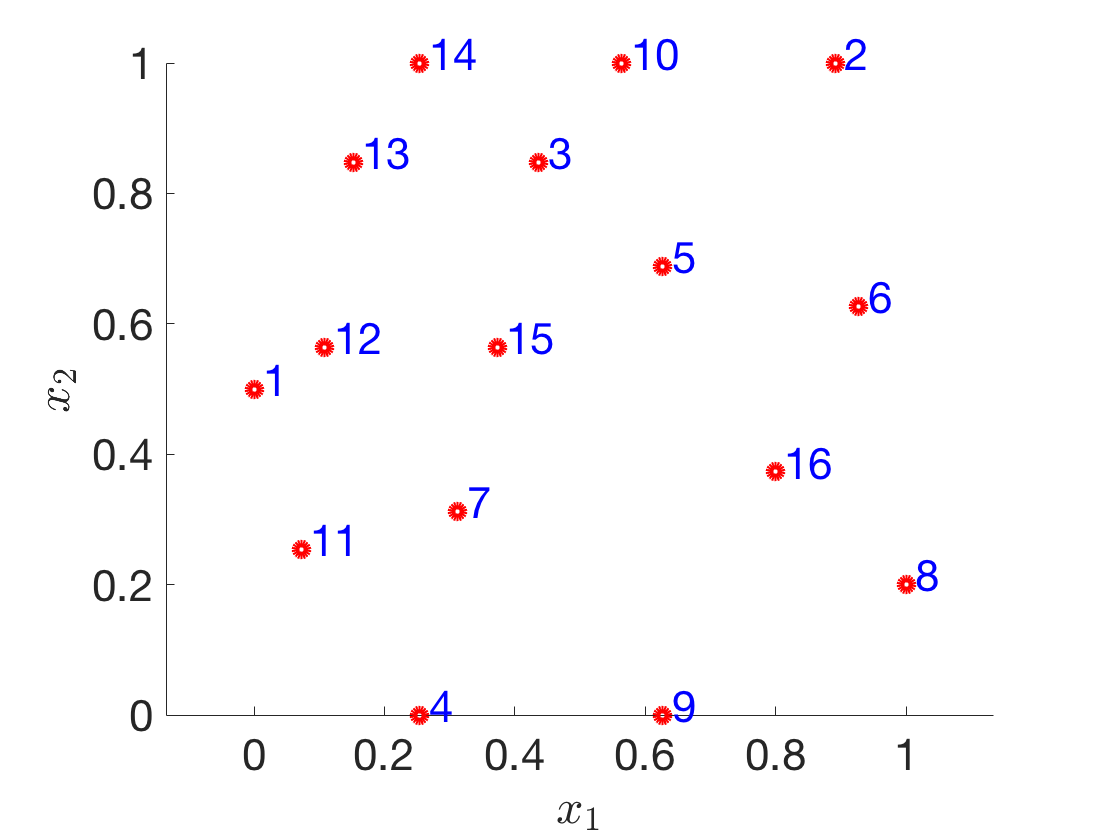}}  
  ~ ~
\subfloat[$N=15$ ] {\includegraphics[width=0.3\textwidth]
 {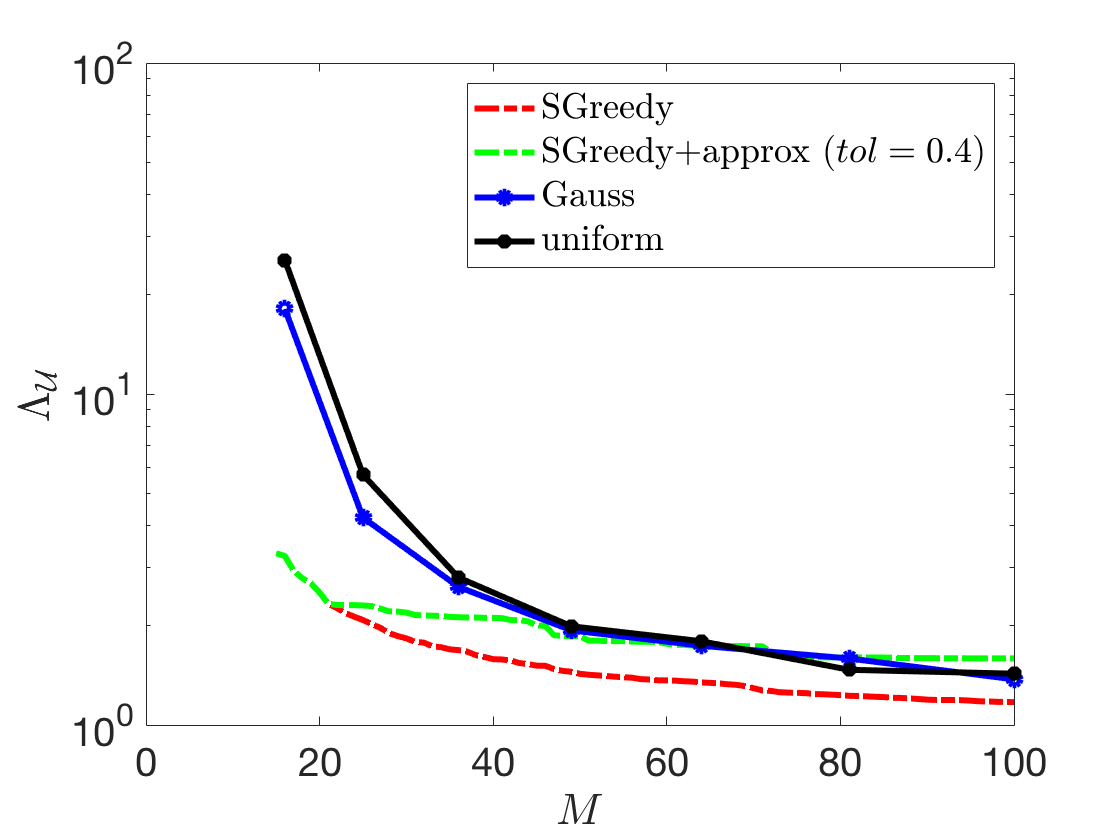}}
   ~ ~
\subfloat[$N=15$ ] {\includegraphics[width=0.3\textwidth]
 {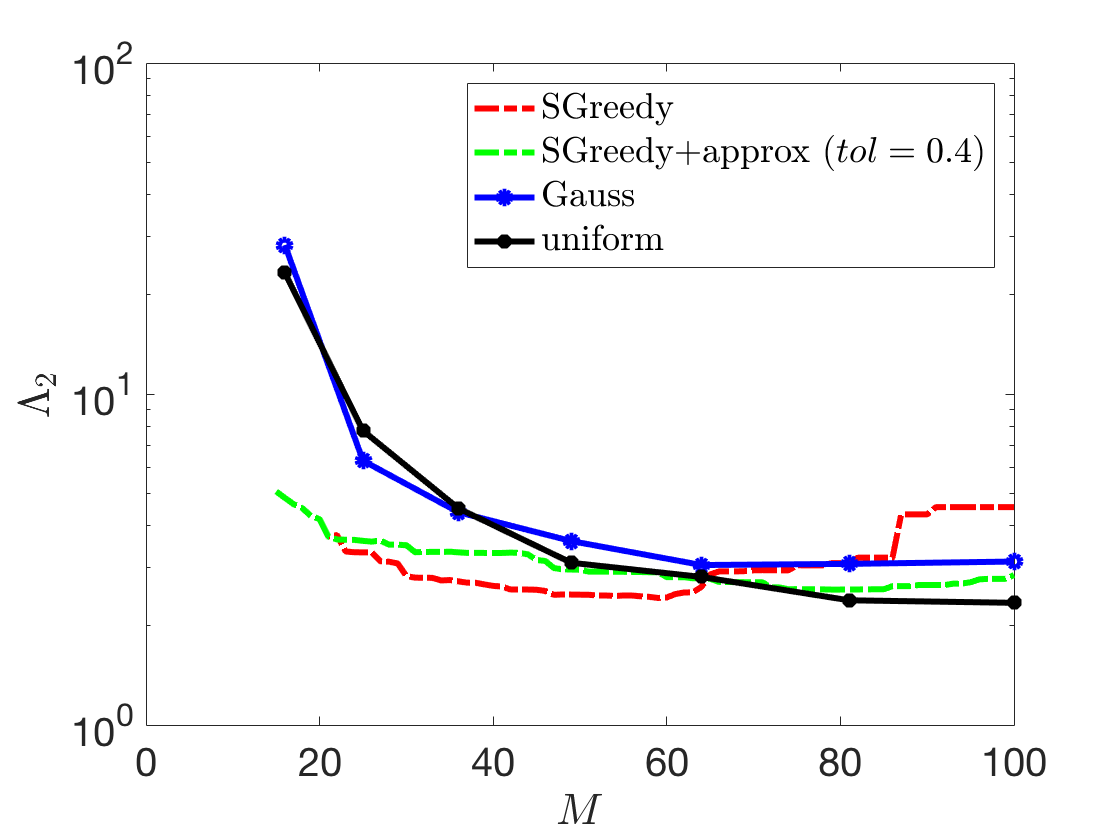}} 
 
\caption{two-dimensional problem. 
Figures (a)-(d): location of the observation centers selected by SGreedy. 
Figures (b)-(e): behavior of $\Lambda_{\mathcal{U}}$ with $M$ for different choices of the observation centers.
Figures (c)-(f): behavior of $\Lambda_{2}$ with $M$ for different choices of the observation centers.
}
 \label{fig:greedy_2D}
\end{figure}

Figure \ref{fig:Nadapt_unconstrained} shows the behavior of $E_{\rm avg}$ defined in \eqref{eq:Eavg} with $N$,
for several choices of $M$. Observations are chosen using the SGreedy+approximation algorithm with threshold $tol = 0.4$. Here, the  value of $\xi$ is   chosen using holdout validation based on $I= M/2$ additional measurements (see \cite{taddei2017adaptive,maday2017adaptive} for further details).
We observe that for noisy measurements the error reaches a minimum for an intermediate  value of $N$, which depends on $M$ and on the noise level. These results are  consistent with the interpretation  --- stated in section \ref{sec:analysis} ---  of $N$ as a regularization parameter. 

% \label{fig:Nadapt_unconstrained}
\begin{figure}[h!]
\centering
\subfloat[unbiased, ${\rm SNR} = \infty$] {\includegraphics[width=0.3\textwidth]
 {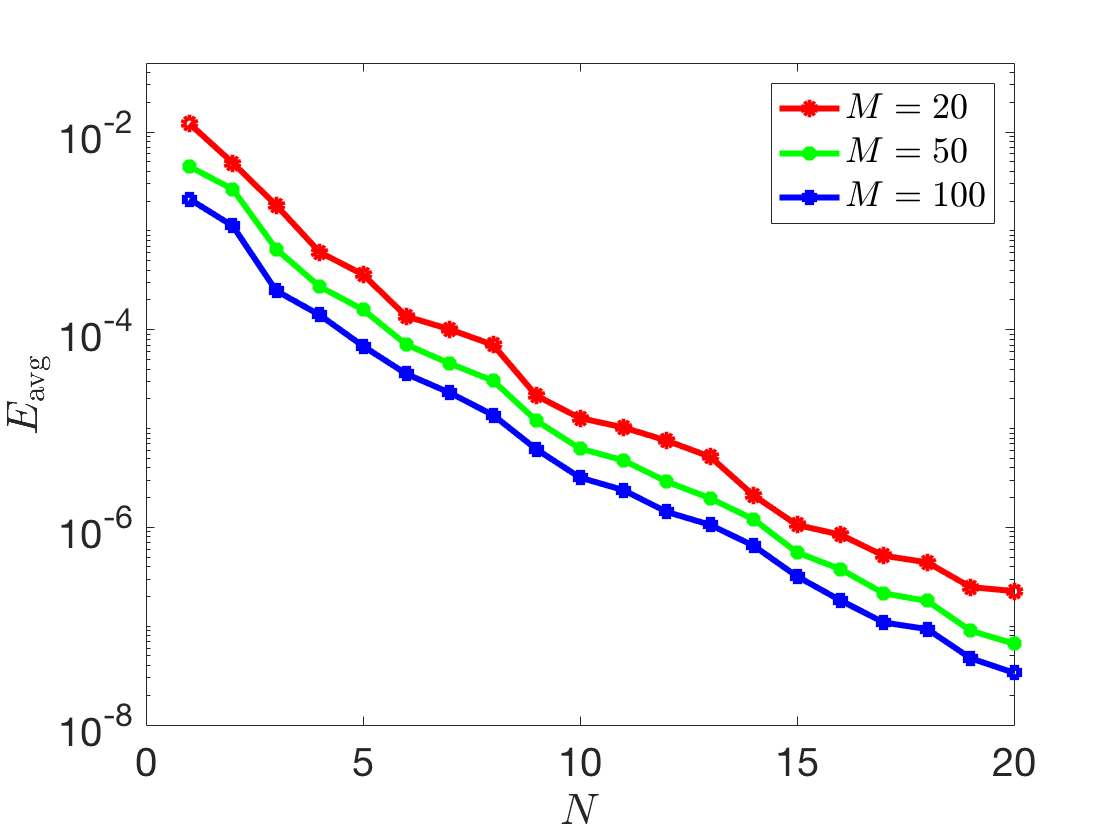}}  
  ~  
\subfloat[biased, ${\rm SNR} = \infty$] {\includegraphics[width=0.3\textwidth]
 {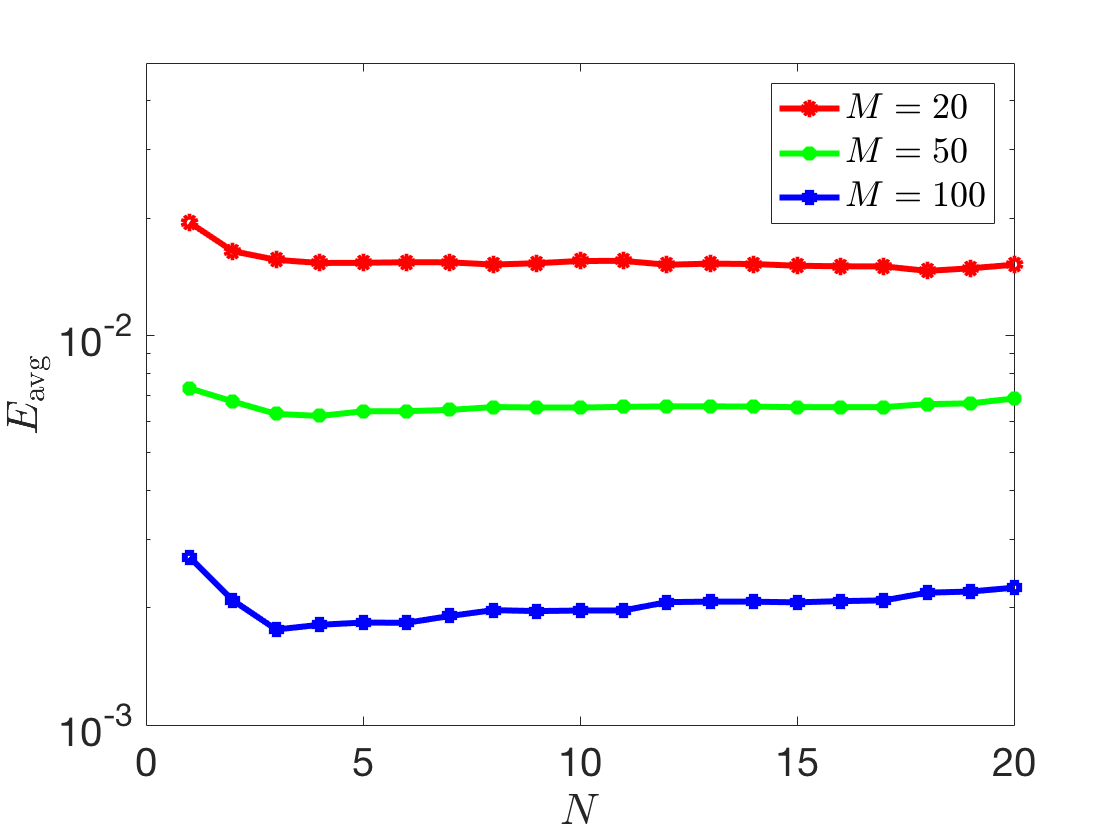}}

 \subfloat[unbiased, ${\rm SNR} = 3$] {\includegraphics[width=0.3\textwidth]
 {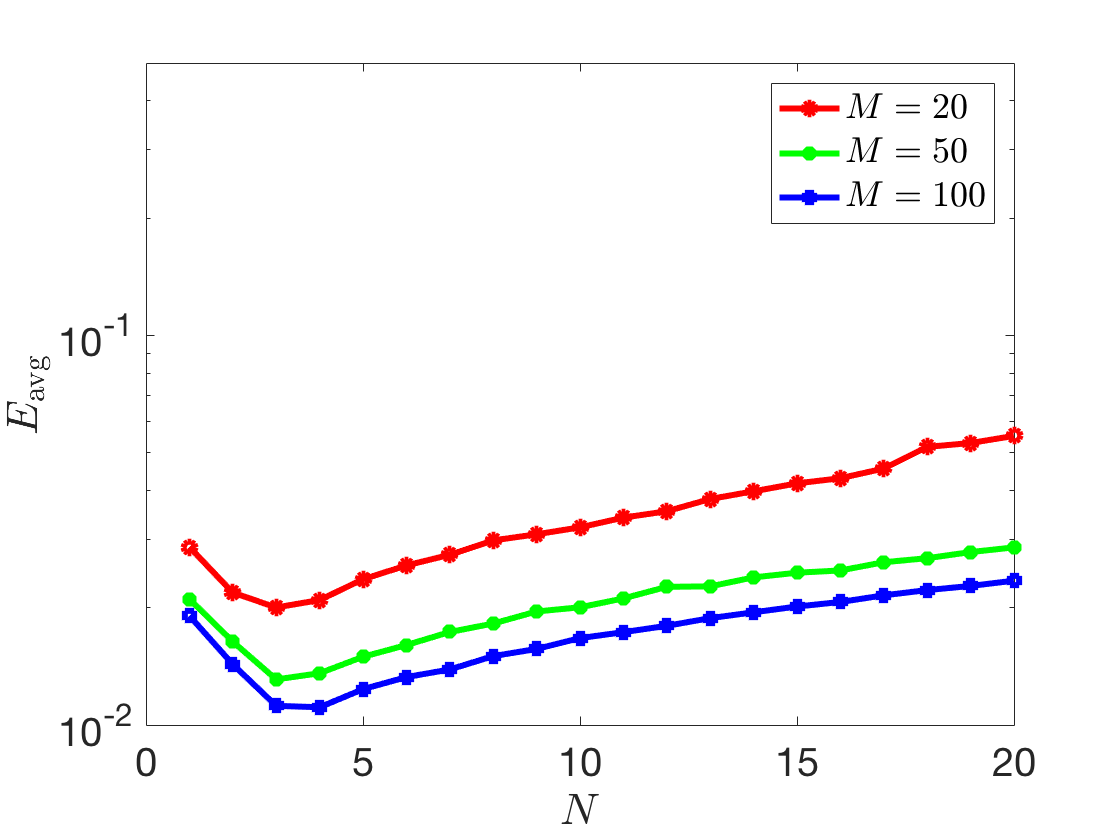}}  
  ~  
\subfloat[biased, ${\rm SNR} = 3$] {\includegraphics[width=0.3\textwidth]
 {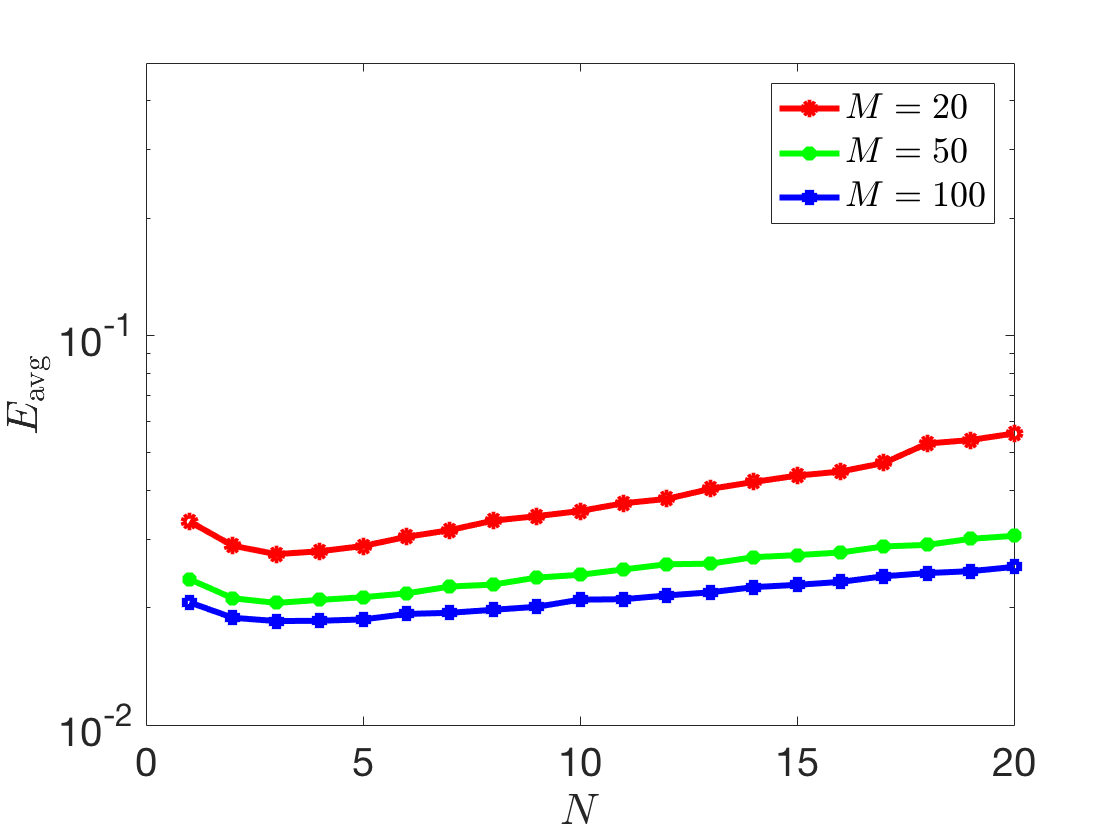}}
 
\caption{two-dimensional problem.
Behavior of $E_{\rm avg}$ with $N$ for several values of $M$, for linear PBDW ($\Phi_N = \mathbb{R}^N$).
}
 \label{fig:Nadapt_unconstrained}
\end{figure}

\subsubsection*{Nonlinear  PBDW}

Figure \ref{fig:Nadapt_constrained} replicates the  experiment of Figure \ref{fig:Nadapt_unconstrained}  for the nonlinear formulation.
We observe that the nonlinear formulation is significantly more robust to the choice of $N$ compared to the linear formulation, particularly in the presence of noise.

% \label{fig:Nadapt_constrained}
\begin{figure}[h!]
\centering
\subfloat[unbiased, ${\rm SNR} = \infty$] {\includegraphics[width=0.3\textwidth]
 {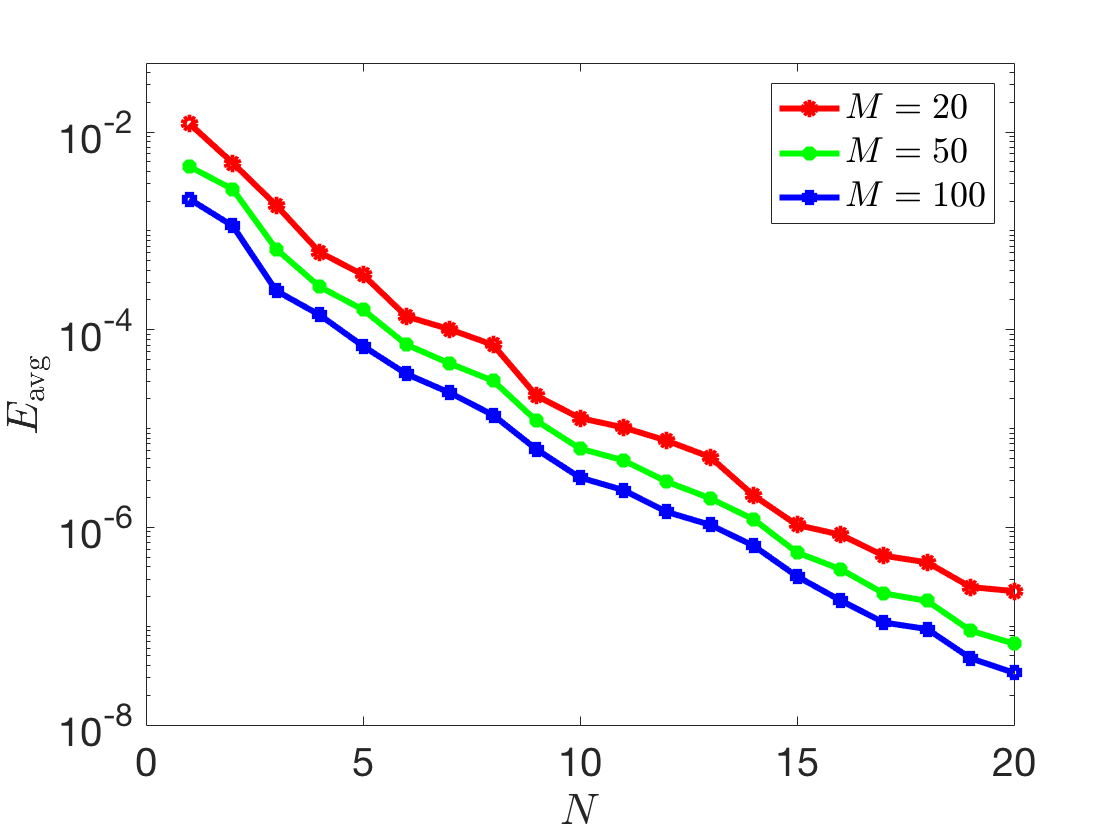}}  
  ~  
\subfloat[biased, ${\rm SNR} = \infty$] {\includegraphics[width=0.3\textwidth]
 {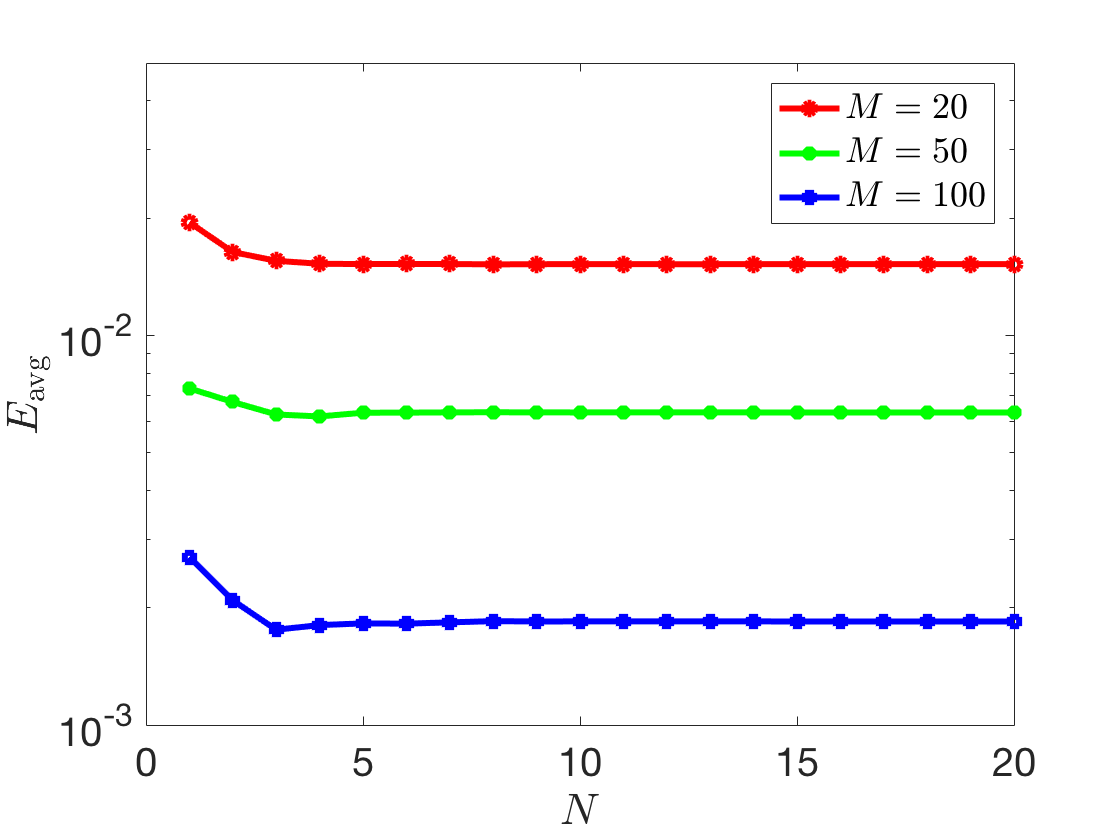}}

 \subfloat[unbiased, ${\rm SNR} = 3$] {\includegraphics[width=0.3\textwidth]
 {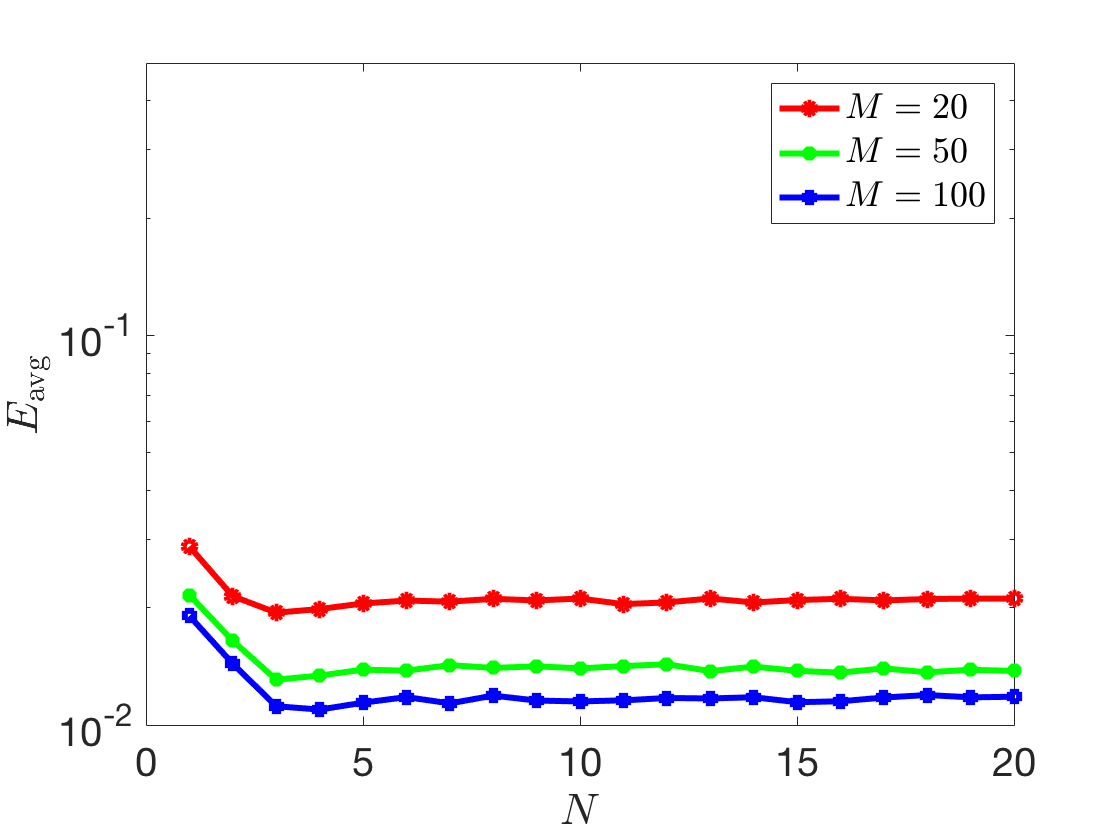}}  
  ~  
\subfloat[biased, ${\rm SNR} = 3$] {\includegraphics[width=0.3\textwidth]
 {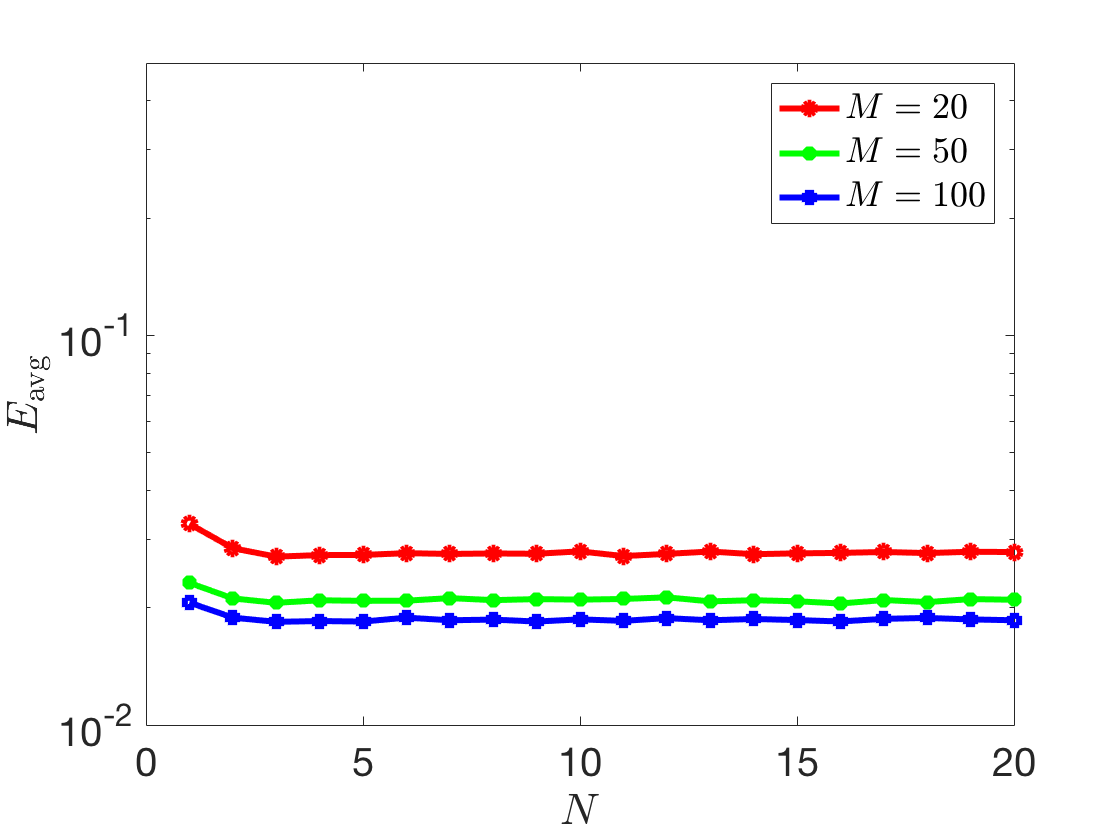}}
 
\caption{two-dimensional problem.
Behavior of $E_{\rm avg}$ with $N$ for several values of $M$, for nonlinear PBDW 
 ($\Phi_N \subsetneq \mathbb{R}^N$).
}
 \label{fig:Nadapt_constrained}
\end{figure} 

In Figures  \ref{fig:Nconv_constrained_unbiased} and \ref{fig:Nconv_constrained_biased}, we investigate the behavior of $E_{\rm avg}$ with $N$, for $M=N+3$, for both linear and nonlinear formulations, and two noise levels. We consider three strategies for the selection of the observation centers: uniform points, Gaussian points, and adaptive points (based on SGreedy). For the problem at hand,  the nonlinear formulation improves reconstruction performance,  particularly in presence of noise and for non-adaptive selections of measurement locations.

% \label{fig:Nconv_constrained_unbiased}
\begin{figure}[h!]
\centering
\subfloat[linear, ${\rm SNR} = \infty$] {\includegraphics[width=0.3\textwidth]
 {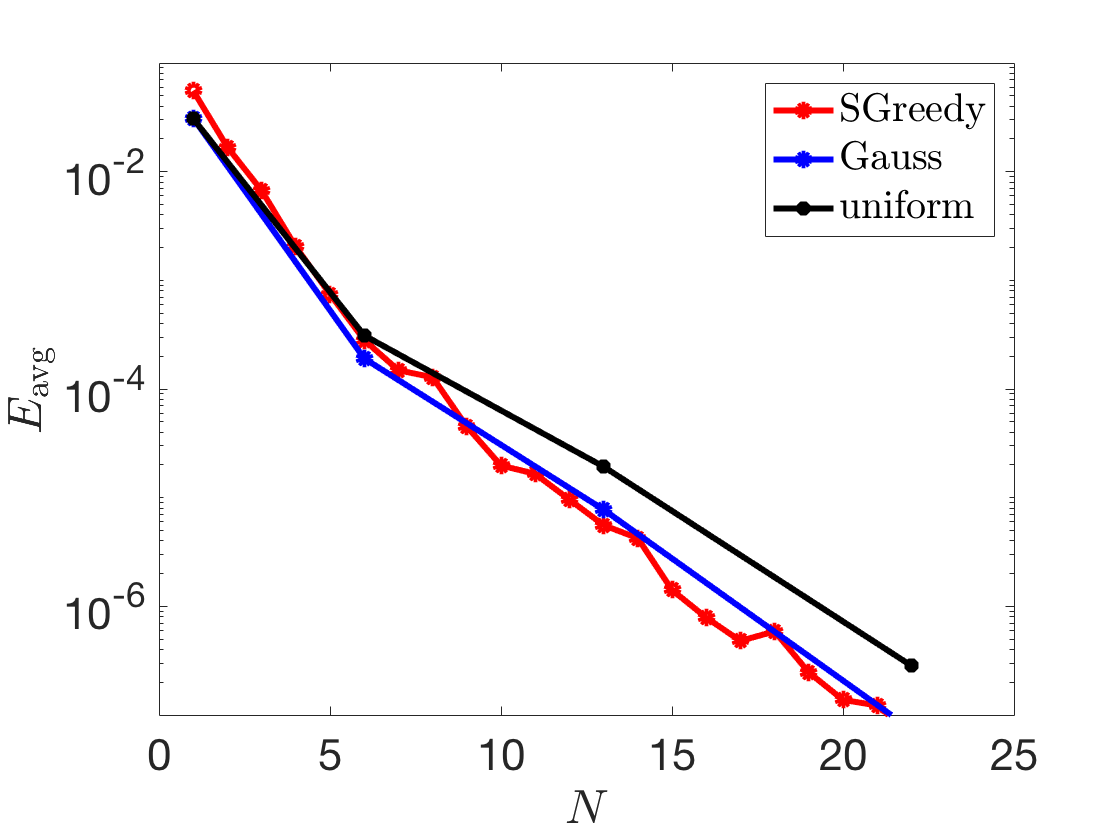}}  
  ~  
\subfloat[nonlinear,  ${\rm SNR} = \infty$] {\includegraphics[width=0.3\textwidth]
 {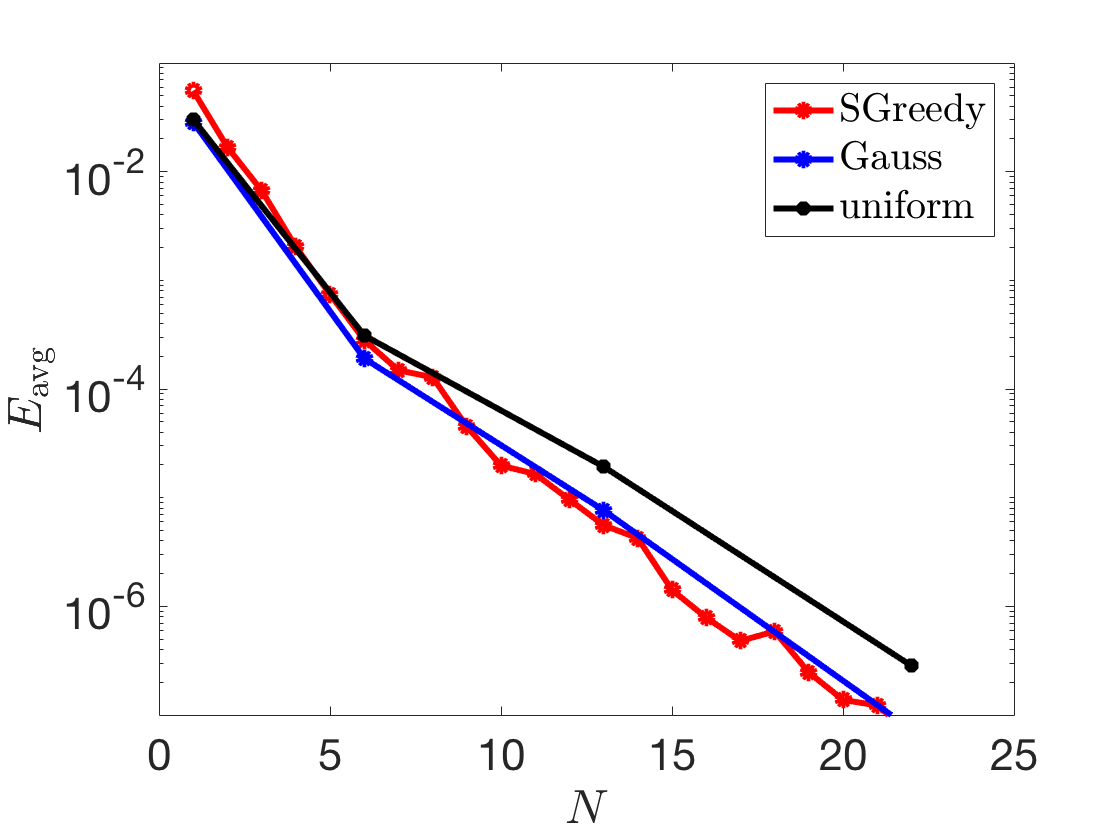}}

\subfloat[linear, ${\rm SNR} = 3$] {\includegraphics[width=0.3\textwidth]
 {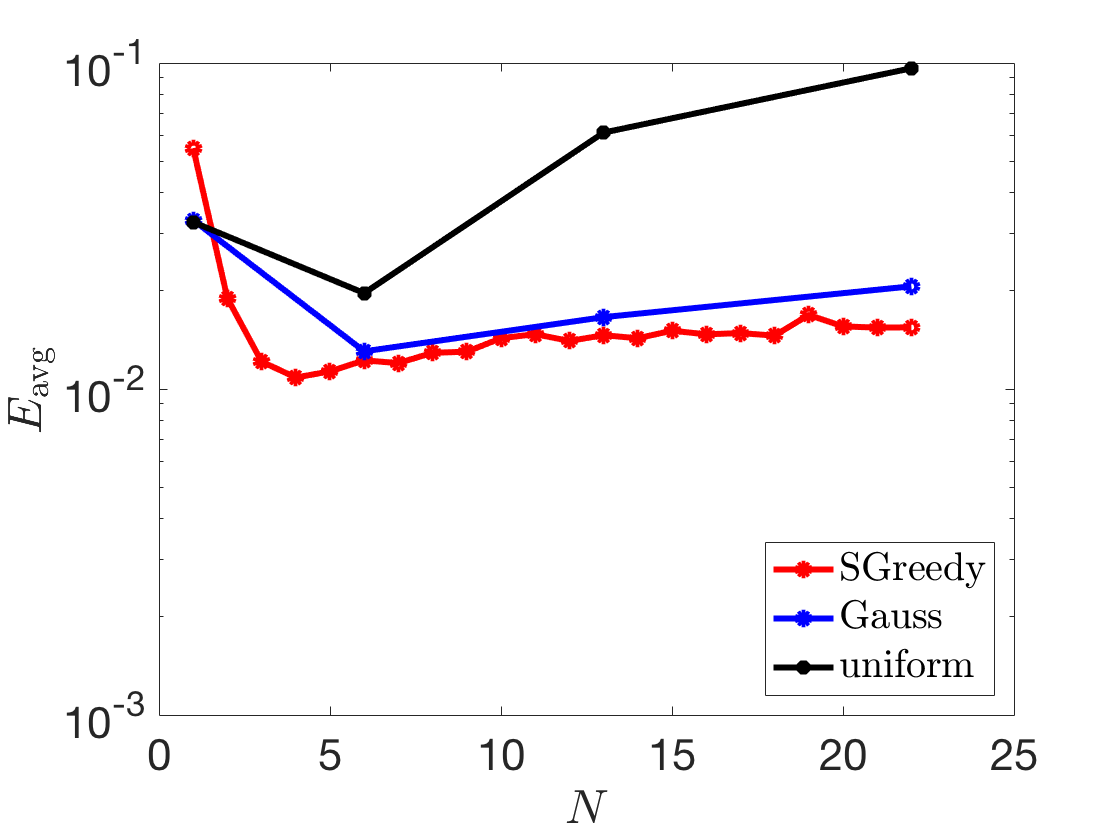}}  
  ~  
\subfloat[nonlinear,  ${\rm SNR} = 3$] {\includegraphics[width=0.3\textwidth]
 {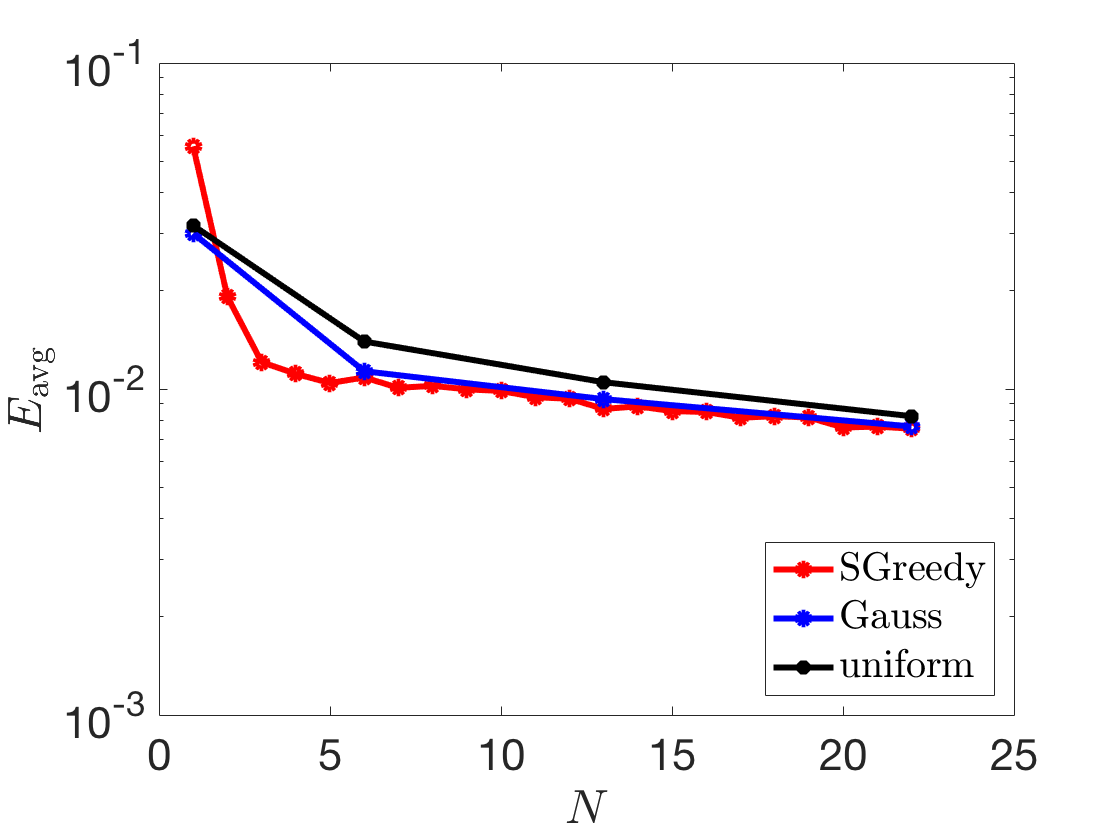}}
 
\caption{two-dimensional problem.
Behavior of $E_{\rm avg}$ with $N$ for  $M=N+3$, for linear and nonlinear PBDW, and several choices of measurement locations (unbiased case).
}
 \label{fig:Nconv_constrained_unbiased}
\end{figure}

%  \label{fig:Nconv_constrained_biased}
\begin{figure}[h!]
\centering
\subfloat[linear, ${\rm SNR} = \infty$] {\includegraphics[width=0.3\textwidth]
 {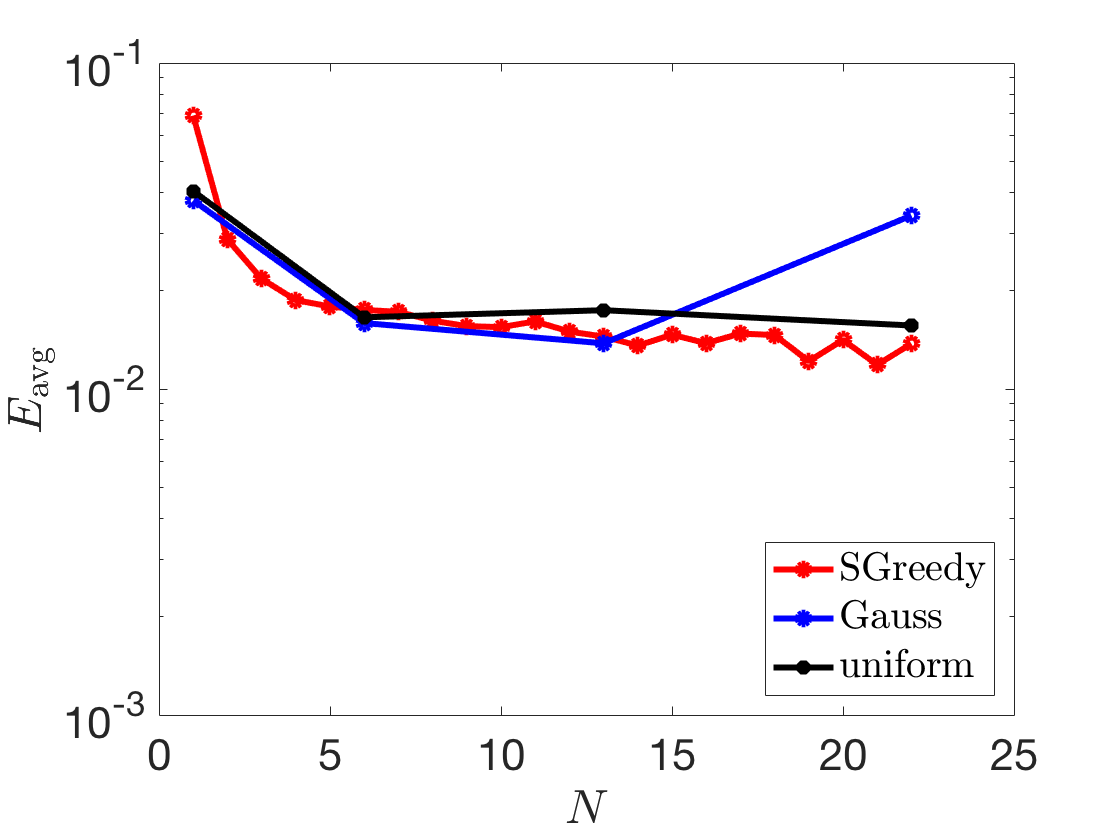}}  
  ~  
\subfloat[nonlinear,  ${\rm SNR} = \infty$] {\includegraphics[width=0.3\textwidth]
 {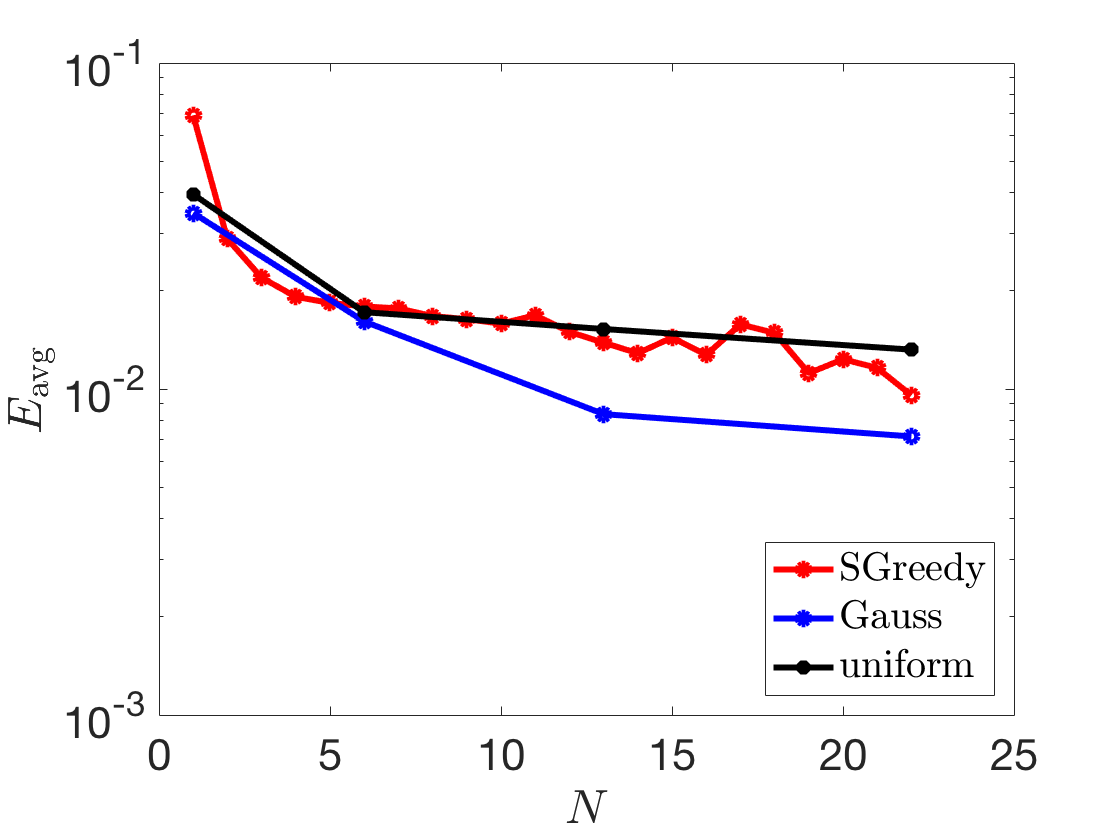}}

\subfloat[linear, ${\rm SNR} = 3$] {\includegraphics[width=0.3\textwidth]
 {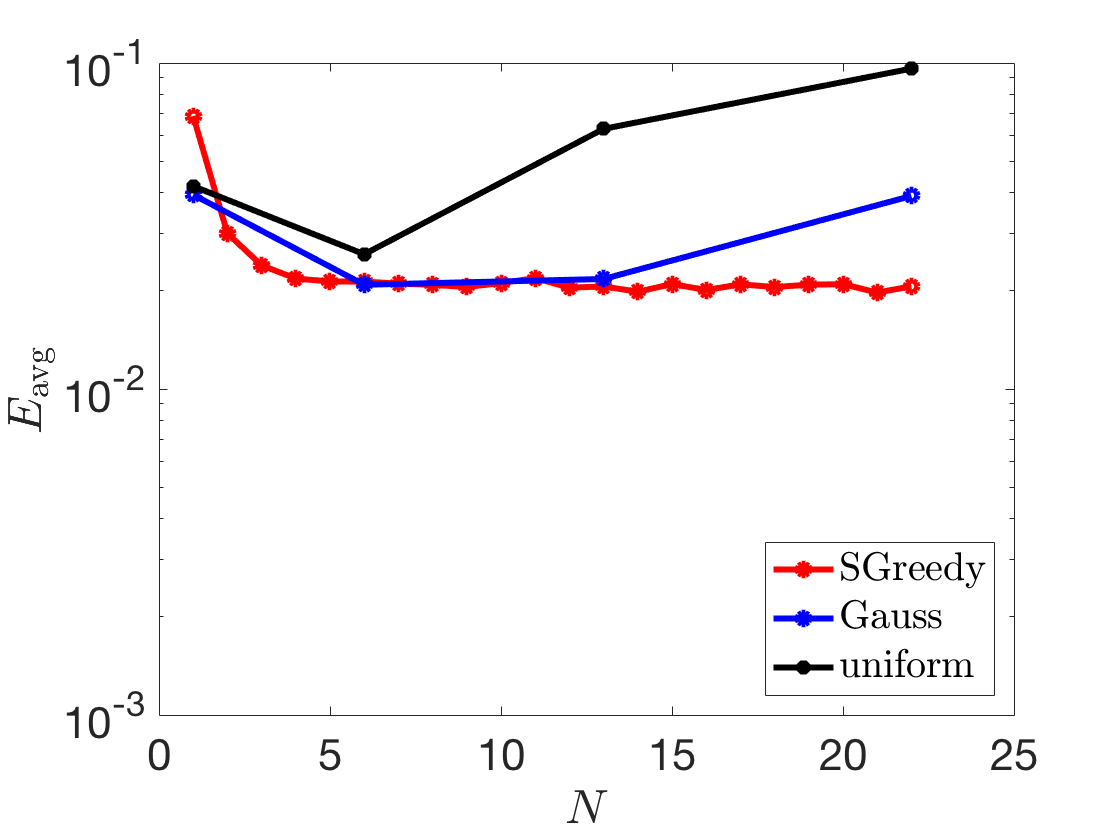}}  
  ~  
\subfloat[nonlinear,  ${\rm SNR} = 3$] {\includegraphics[width=0.3\textwidth]
 {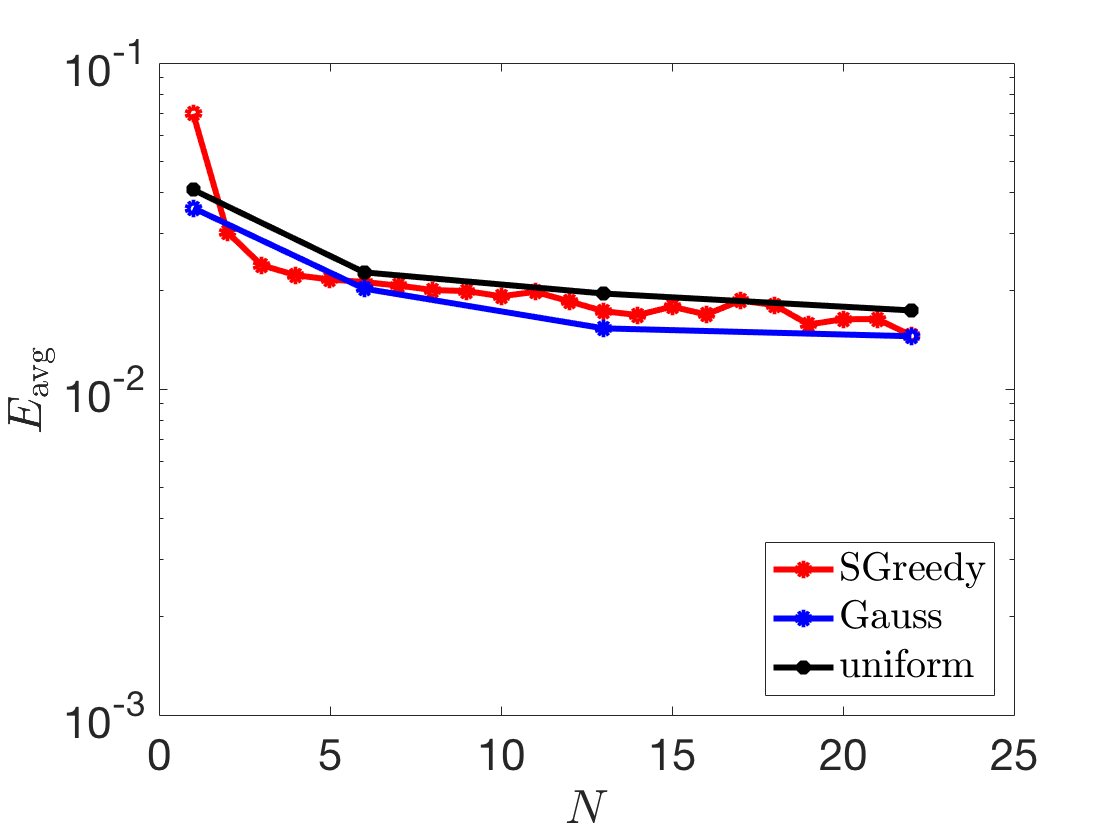}}
 
\caption{two-dimensional problem.
Behavior of $E_{\rm avg}$ with $N$ for  $M=N+3$, for linear and nonlinear PBDW, and several choices of measurement locations (biased case).
}
 \label{fig:Nconv_constrained_biased}
\end{figure}

\subsection{A three-dimensional problem}
\label{sec:3D_acoustics_numerics}

\subsubsection{Problem statement}
We  consider the three-dimensional  model problem:
\begin{equation}
\label{eq:acoustic_3D_exdom}
\left\{
\begin{array}{ll}
 - (1+ \epsilon {\texttt{i}})  \Delta  u_g(\mu)
\,   - (2\pi \mu)^2  u_g (\mu) = g  &  \mbox{in} \; \Omega; \\[3mm]
\partial_n  u_g (\mu)
= 0  &  \mbox{on} \;  \partial   \Omega; \\
\end{array}
\right.
\end{equation}
where $\epsilon=10^{-2}$,   $\Omega  = (-1.5,1.5) \times (0,3) \times (0,3) \setminus  \Omega^{\rm cut}$,
$\Omega^{\rm cut} = (-0.5,0.5) \times (0.25,0.5) \times (0,1)$.
Figure \ref{fig:acoustic_3D_extracted_dom} shows the geometry; the same test case has been considered in \cite{maday2017adaptive}. 
We define the bk manifold $\mathcal{M}^{\rm bk} = 
\{ u^{\rm bk}(\mu)
=u_{g^{\rm bk}}(\mu): \mu \in \mathcal{P}^{\rm bk} = [0.1,0.5] \}$, and we define the true field as the solution
to \eqref{eq:acoustic_3D_exdom} for some  $\mu^{\rm true} \in \mathcal{P}^{\rm bk}$
and $g=g^{\rm true}$, where 
$$
g^{\rm bk}(x) = 10 \, e^{ - \| x - p^{\rm bk} \|_2^2};
\quad
g^{\rm true}(x) = 10 \, e^{ - \| x - p^{\rm true} \|_2^2};
$$
and $p^{\rm bk} = [0,2,1]$, $p^{\rm true} = [-0.02,2.02,1]$.
{
Lack of knowledge of}  the input frequency $\mu$ constitutes the anticipated {  ignorance} in the system,
while the   incorrect location of the acoustic source (that is,  $p^{\rm bk} \neq p^{\rm true}$) constitutes 
unanticipated {  ignorance/model error} .
Computations are based on a P2 Finite Element (FE) discretization with roughly $\mathcal{N} = 16000$ degrees of freedom in $\Omega$. 

\begin{figure}[h]
\centering
\subfloat[   ] {\includegraphics[width=0.3\textwidth]
 {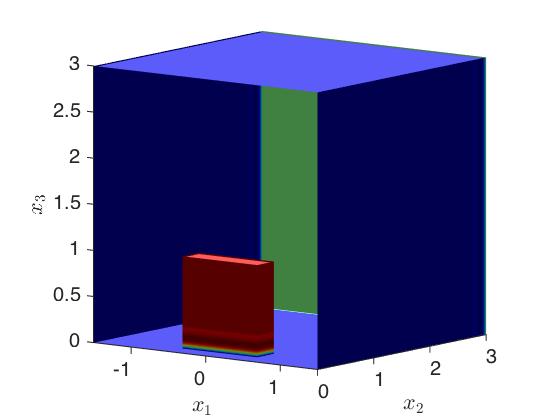}}
~~~~~
\subfloat[] {
\begin{tikzpicture}[x=1.0cm,y=1.0cm]
\linethickness{0.3 mm}
\linethickness{0.3 mm}
\draw  (0,0)--(0.25,0)--(0.25,1)--(0.5,1)--(0.5,0)-- (3,0)-- (3,3)-- (0,3)-- (0, 0);
%\draw[densely dashed]     (2,2)-- (3,2)-- (3,3)-- (2,3)-- (2, 2);

\coordinate [label={center:  {\large {$\Omega$} }}] (E) at (1.3,2.3) ;
\coordinate [label={center:  {\large {$\Omega^{\rm cut}$} }}] (E) at (1.1,0.8) ;
%\coordinate [label={center:  {\large {$\Omega$}}}] (E) at (2.3, 2.3) ;
%\coordinate [label={center:  {\large {$\Gamma^{\rm in}$}}}] (E) at (-0.5, 1.5) ;
\coordinate [label={center:  {\large {$x_2$}}}] (E) at (1.4, -0.3) ;
\coordinate [label={center:  {\large {$x_3$}}}] (E) at (3.3, 1.5) ;
\end{tikzpicture}
}
~~~~~
\subfloat[] {
\begin{tikzpicture}[x=1.0cm,y=1.0cm]
\linethickness{0.3 mm}
\linethickness{0.3 mm}
\draw  (-1.5,0)--(1.5,0)--(1.5,3)--(-1.5,3)--(-1.5,0);
\draw[densely dashed]     (-0.5,0)-- (-0.5,1)-- (0.5,1)-- (0.5,0);
%\draw[densely dashed]     (-1.5,2)-- (1.5,2);

\coordinate [label={center:  {\large {$\Omega^{\rm cut}$} }}] (E) at (0,0.4) ;
\coordinate [label={center:  {\large {$\Omega$}}}] (E) at (0, 2.3) ;

\coordinate [label={center:  {\large {$x_1$}}}] (E) at (0, -0.3) ;
\coordinate [label={center:  {\large {$x_3$}}}] (E) at (1.8, 1.5) ;
\end{tikzpicture}
}
\caption{three-dimensional problem: computational domain. }
\label{fig:acoustic_3D_extracted_dom}
\end{figure}

As in the previous example, we model the synthetic observations by a Gaussian convolution with standard deviation $r_{\rm w}$, see \eqref{eq:exp_observations}.
In order to simulate noisy observations, we add Gaussian homoscedastic disturbances
$$
y_m = \ell_m( u^{\rm true} ) + \epsilon_m^{\rm re} + {\rm i} \epsilon_m^{\rm im},
\quad
{\rm where} \; \;
\epsilon_m^{\rm re} \overset{\rm iid}{\sim} \mathcal{N}(0, \sigma_{\rm re}^2),  \; \;
\epsilon_m^{\rm im} \overset{\rm iid}{\sim} \mathcal{N}(0, \sigma_{\rm im}^2), 
$$
with 
$\sigma_{\rm re} = \frac{1}{\rm SNR} \; {\rm std} \left( \{ {\rm Re} \{
\ell \left( u^{\rm true}; \tilde{x}_j, \, r_{\rm w} \right) \}
  \}_{j=1}^{100}  \right),$
$   \sigma_{\rm im} = \frac{1}{\rm SNR} \; {\rm std} \big( \{ {\rm Im} \{ \ell \big( u^{\rm true}; $ 
$\tilde{x}_j, \, r_{\rm w} \big)$ $\} \}_{j=1}^{100}  \big),$
for given signal-to-noise ratio ${\rm SNR}>0$ and uniformly-randomly chosen  observation points
$\{  \tilde{x}_j \}_j \subset  \Omega$.
Furthermore, we measure performance by computing the average relative
$L^2$    error   $E_{\rm avg}$ \eqref{eq:Eavg}  over  $n_{\rm test}=10$  different choices of the parameter $\mu$ in $\mathcal{P}^{\rm bk}$.

We consider the ambient space $\mathcal{U}=H^1(\Omega)$ endowed with the inner product 
$$
(u, v) =  \int_{\Omega} \nabla u \cdot \nabla \bar{v} \, + \, u \, \bar{v} \, dx,
$$  
where $\bar{z}$ denotes the complex conjugate of $z \in \mathbb{C}$.
On the other hand, the background space $\mathcal{Z}_N$ is built using the Weak-Greedy algorithm based on the residual, as in \cite{maday2015parameterized}.

Since the solution is complex, we 
compute the solution to the nonlinear formulation by 
solving  the complex-valued counterpart 
of \eqref{eq:algebraic_pbdw_constrained} as 
a $2N$-dimensional real-valued quadratic problem for 
$\widehat{\mathbf{z}}_{\xi}^{\star} = 
[{\rm Re} \{ \hat{\mathbf{z}}_{\xi} \},    
{\rm Im} \{ \hat{\mathbf{z}}_{\xi} \}]$ (see Remark \ref{remark_complex}).
Given $N \leq N_{\rm max} := 30$, 
we estimate the constraints $\{ a_n, b_n \}_{n=1}^{2N}$   by evaluating a Galerkin Reduced Order Model (ROM) based on the reduced space $\mathcal{Z}_{N_{\rm max}}$ for $n_{\rm train} = 10^3$ parameters
$\mu^1,\ldots, \mu^{n_{\rm train}}  \overset{\rm iid}{\sim} {\rm Uniform}(\mathcal{P}^{\rm bk})$:
$$
\left\{
\begin{array}{l}
\displaystyle{
a_n := \min_{i=1,\ldots,n_{\rm train}} 
{\rm Re} \left\{
\left( u_{N_{\rm max}}^{\rm bk}(\mu^i), \, \zeta_n \right)  \right\},
}
\\[3mm]
\displaystyle{
b_n := \max_{i=1,\ldots,n_{\rm train}} 
{\rm Re} \left\{
\left( u_{N_{\rm max}}^{\rm bk}(\mu^i), \, \zeta_n \right) \right\},
}
\\[3mm]
\displaystyle{
a_{n+N} := \min_{i=1,\ldots,n_{\rm train}}  \,
{\rm Im} \left\{ \left( u_{N_{\rm max}}^{\rm bk}(\mu^i), \, \zeta_n \right) \right\},
}
\\[3mm]
\displaystyle{
b_{n+N} := \max_{i=1,\ldots,n_{\rm train}} \,
{\rm Im} \left\{  \left( u_{N_{\rm max}}^{\rm bk}(\mu^i), \, \zeta_n \right)   \right\},
}
\\
\end{array}
\right.
\quad
n=1,\ldots,N.
$$
Here $u_{N_{\rm max}}^{\rm bk}$ denotes the solution to the Galerkin ROM with 
$N_{\rm max}$ degrees of freedom. We refer to the Reduced Basis literature for further details concerning the generation and the evaluation of the ROM; we emphasize  that by resorting to the low-dimensional  ROM 
--- as opposed to the FE model --- 
to estimate lower and upper bounds for the background coefficients we significantly reduce the offline computational effort.

\subsubsection{Results}

Figure \ref{fig:constants3D} shows the behavior of $\Lambda_2$ and  $\Lambda_{\mathcal{U}}$ with $N$ for $M=N+2$   and   $M=N+5$, for two choices of the measurement locations: the SGreedy adaptive algorithm, and a random uniform algorithm in which $x_1,\ldots, x_M$ are sampled uniformly in $\Omega$ with the constraint that 
${\rm dist} \left( x_m, \, \{ x_{m'}  \}_{m'=1}^{m-1}  \right) \geq \delta = 0.02$,
for $m=1,\ldots,M$.
Results for the latter procedure are averaged over $K=10$ independent random choices of measurement locations.
We observe that the Greedy algorithm leads to a reduction in both  $\Lambda_2$ and  $\Lambda_{\mathcal{U}}$ compared to the random uniform algorithm.

\begin{figure}[h!]
\centering
\subfloat[ $M=N+2$] {\includegraphics[width=0.3\textwidth]
 {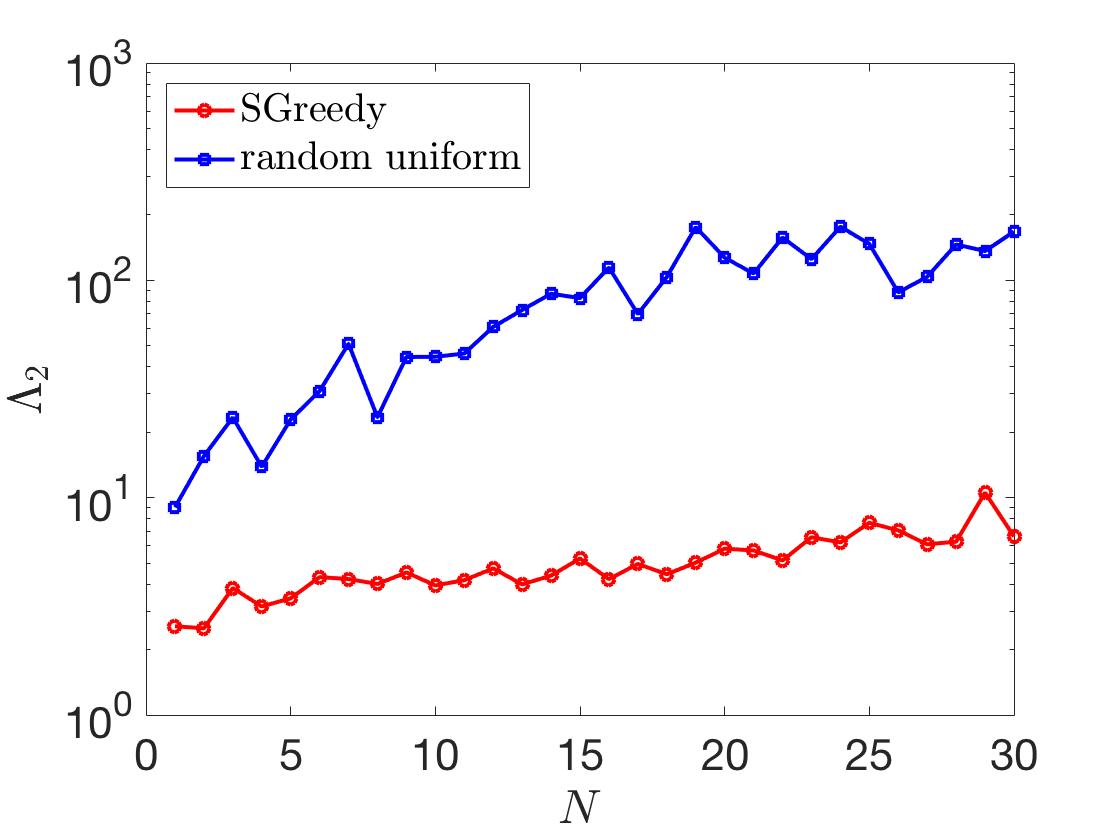}}  
  ~  
\subfloat[$M=N+2$] {\includegraphics[width=0.3\textwidth]
 {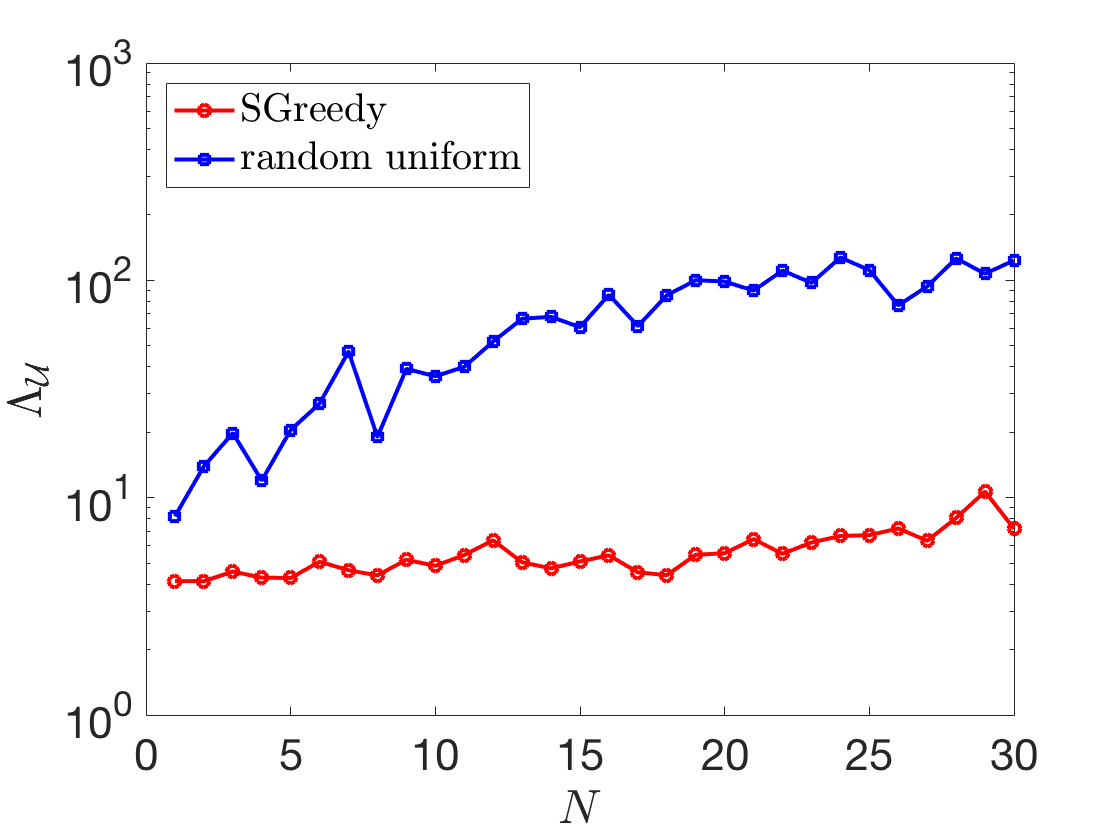}}  

\subfloat[ $M=N+5$] {\includegraphics[width=0.3\textwidth]
 {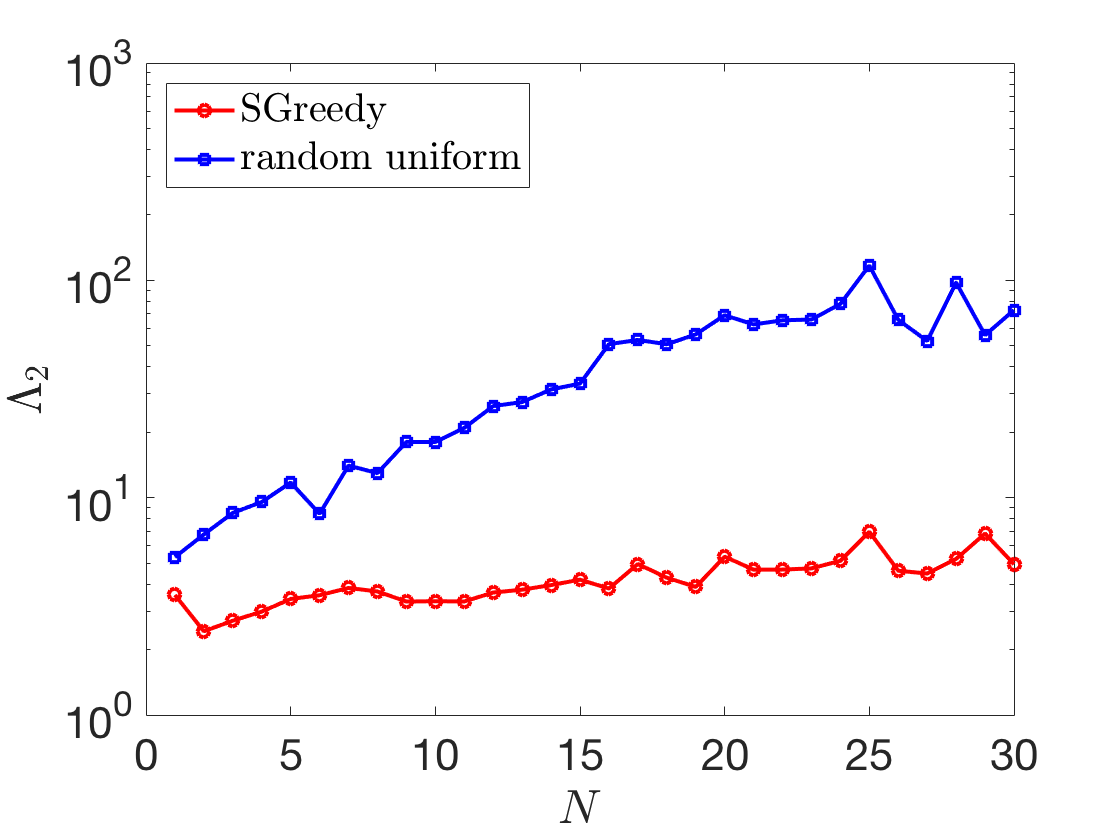}}  
  ~  
\subfloat[$M=N+5$] {\includegraphics[width=0.3\textwidth]
 {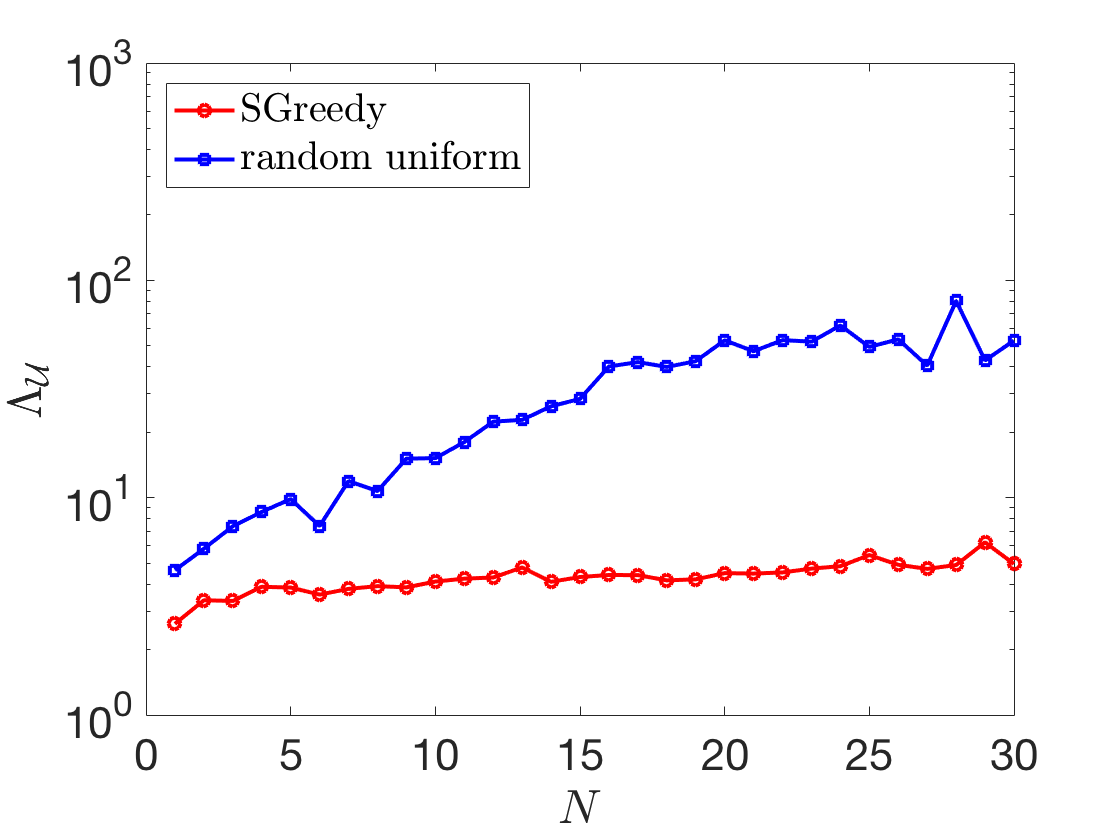}}  
 
\caption{three-dimensional problem. Behavior of $\Lambda_2$ and  $\Lambda_{\mathcal{U}}$ with $N$ for $M=N+2$ (Figures (a)-(b))  and   $M=N+5$ (Figures (c)-(d)).
 }
 \label{fig:constants3D}
\end{figure}

Figures \ref{fig:Nadapt_3D_unbiased} and  \ref{fig:Nadapt_3D_biased}, 
 show the behavior of the average relative error $E_{\rm avg}$ \eqref{eq:Eavg} with $N$ for
several fixed values of $M$,  for two choices of the observation centers, for linear and nonlinear PBDW, for two noise levels, and for both the biased and the unbiased case. In all cases, the regularization hyper-parameter $\xi$ is chosen using holdout validation, based on $I=M/2$ additional measurements. 
As for the previous model problem, we empirically find that the nonlinear formulation improves reconstruction performance for noisy measurements and for non-adaptive selections of measurement locations.

\begin{figure}[h!]
\centering
\subfloat[Greedy,  lin., ${\rm SNR} = \infty$] {\includegraphics[width=0.3\textwidth]
 {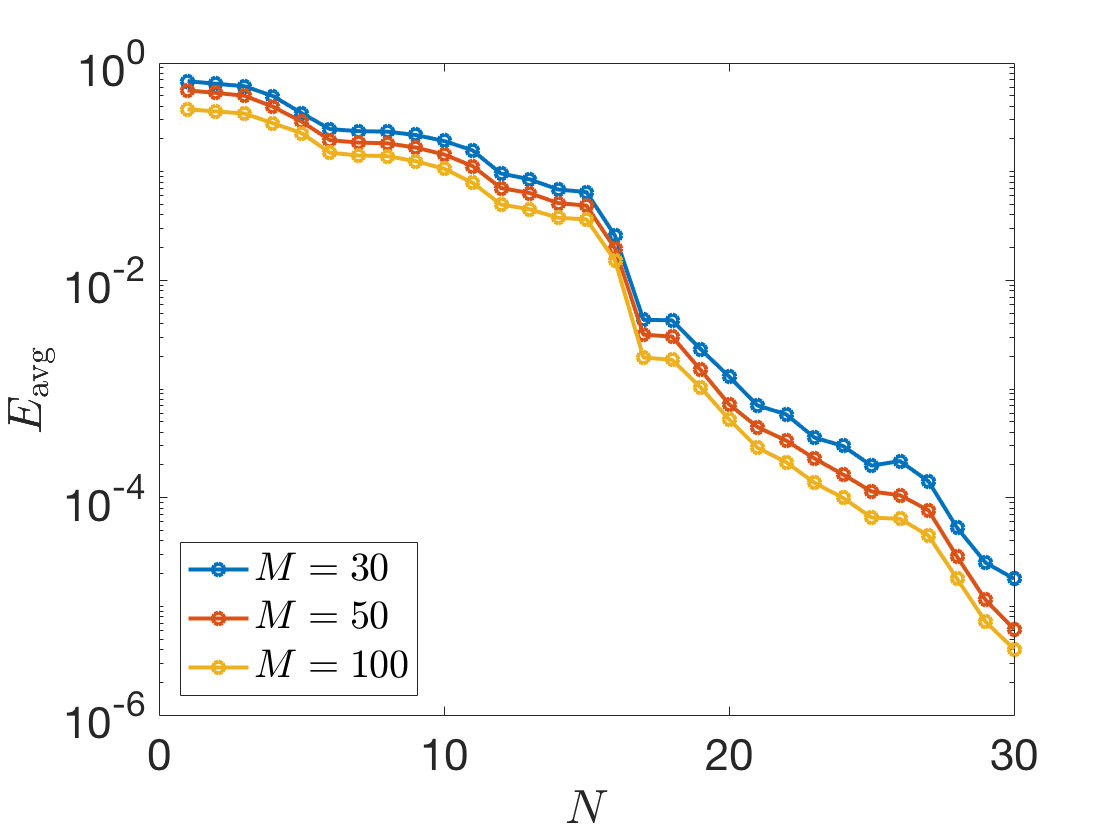}}  
  ~  
  \subfloat[Greedy,  nonlin., ${\rm SNR} = \infty$] {\includegraphics[width=0.3\textwidth]
 {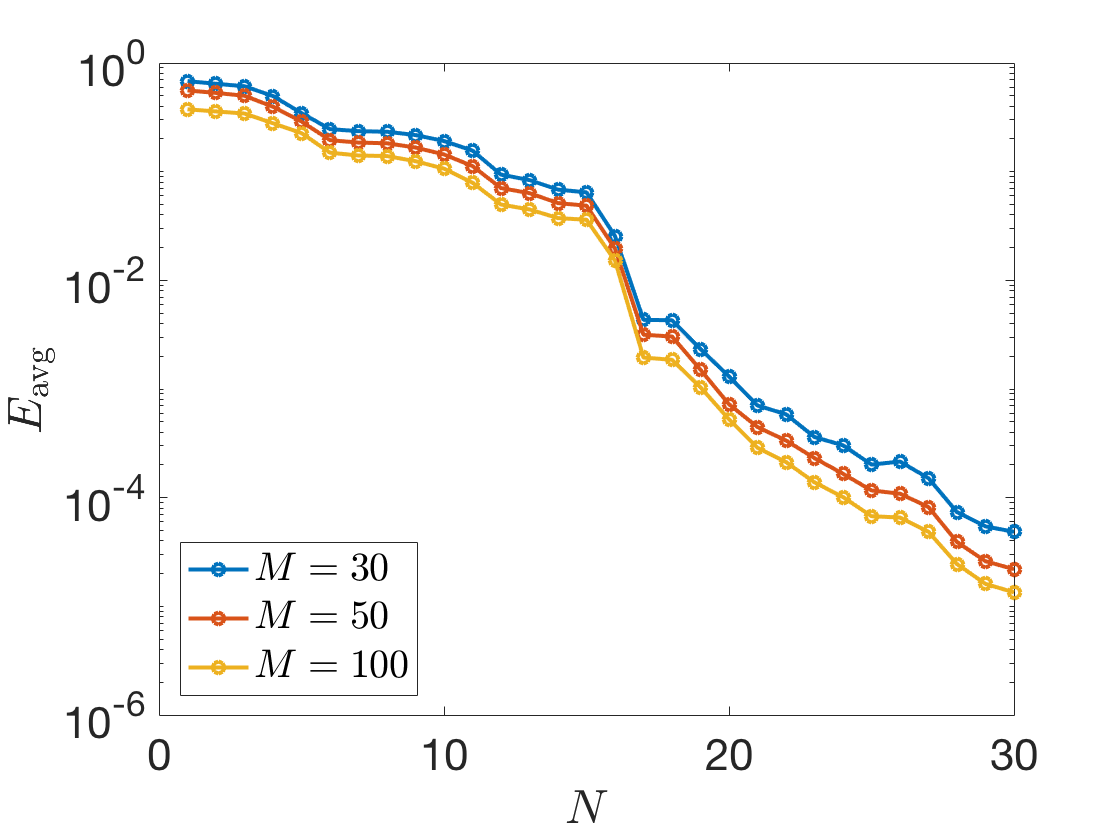}}  

  \subfloat[random,  lin., ${\rm SNR} = \infty$] {\includegraphics[width=0.3\textwidth]
 {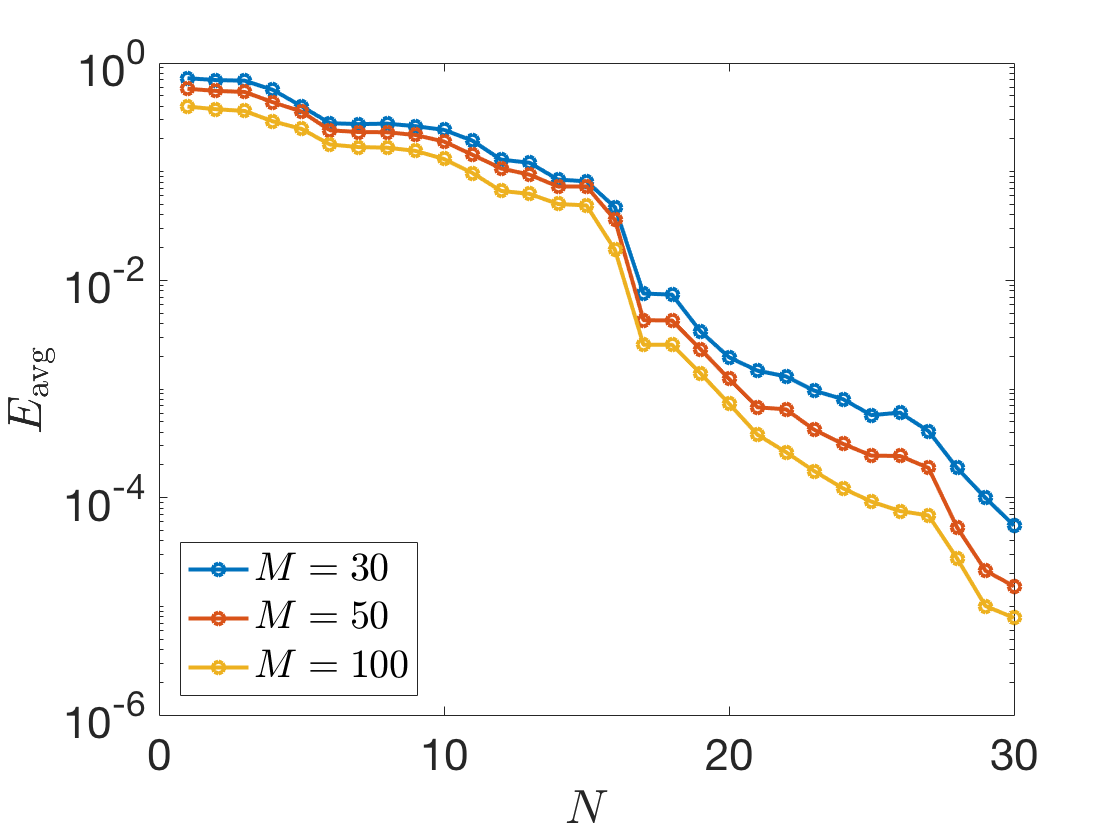}}  
  ~  
  \subfloat[random,  nonlin., ${\rm SNR} = \infty$] {\includegraphics[width=0.3\textwidth]
 {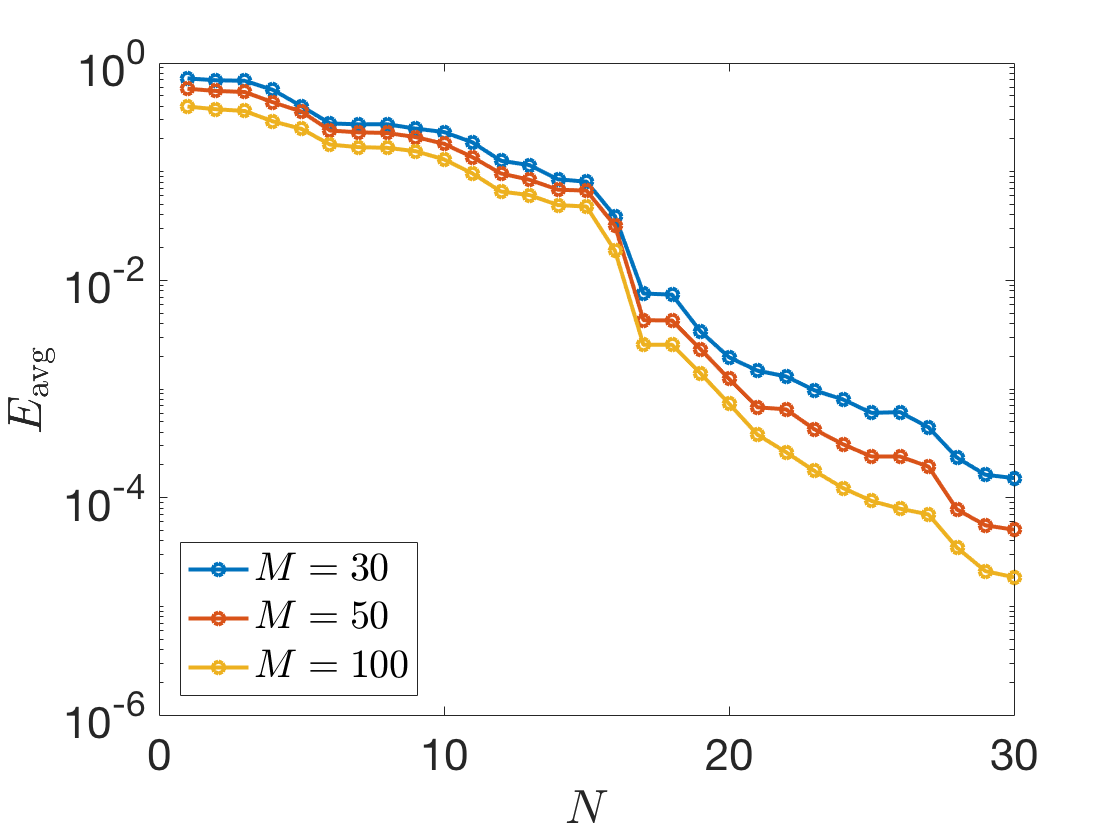}}  
 
 \subfloat[Greedy,  lin., ${\rm SNR}=3$] {\includegraphics[width=0.3\textwidth]
 {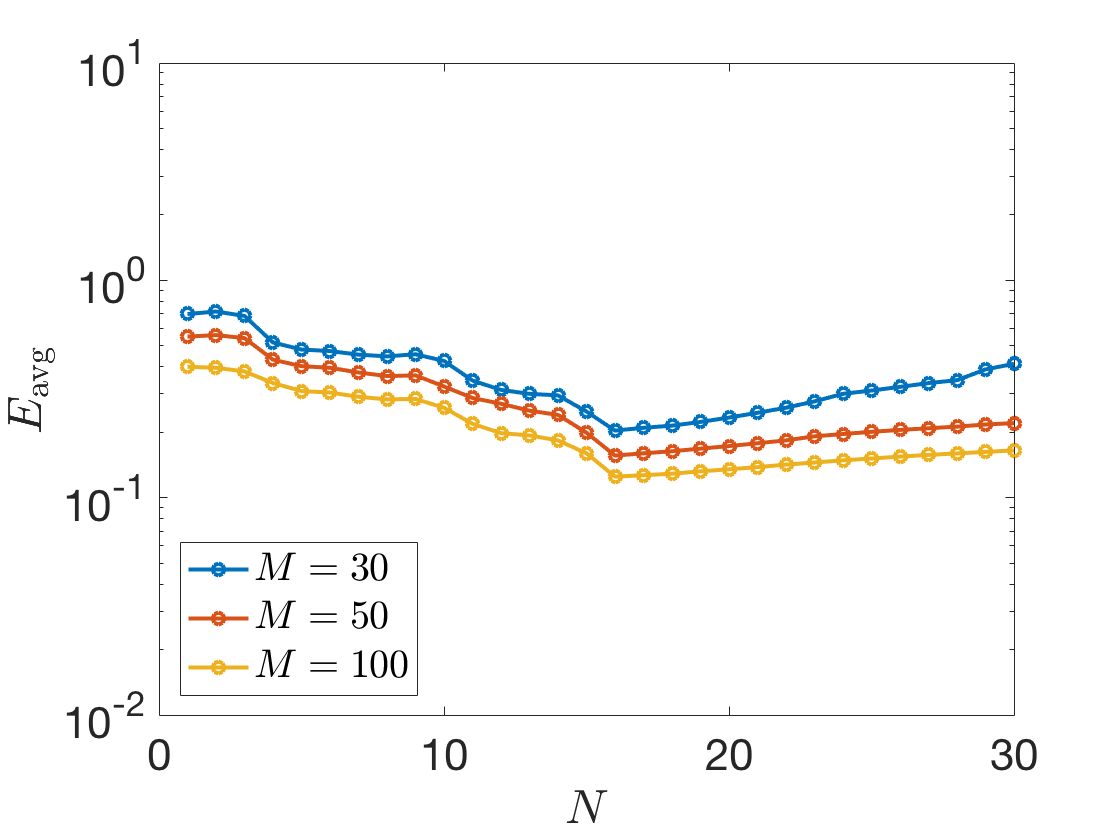}}  
  ~  
  \subfloat[Greedy,  nonlin., ${\rm SNR}=3$] {\includegraphics[width=0.3\textwidth]
 {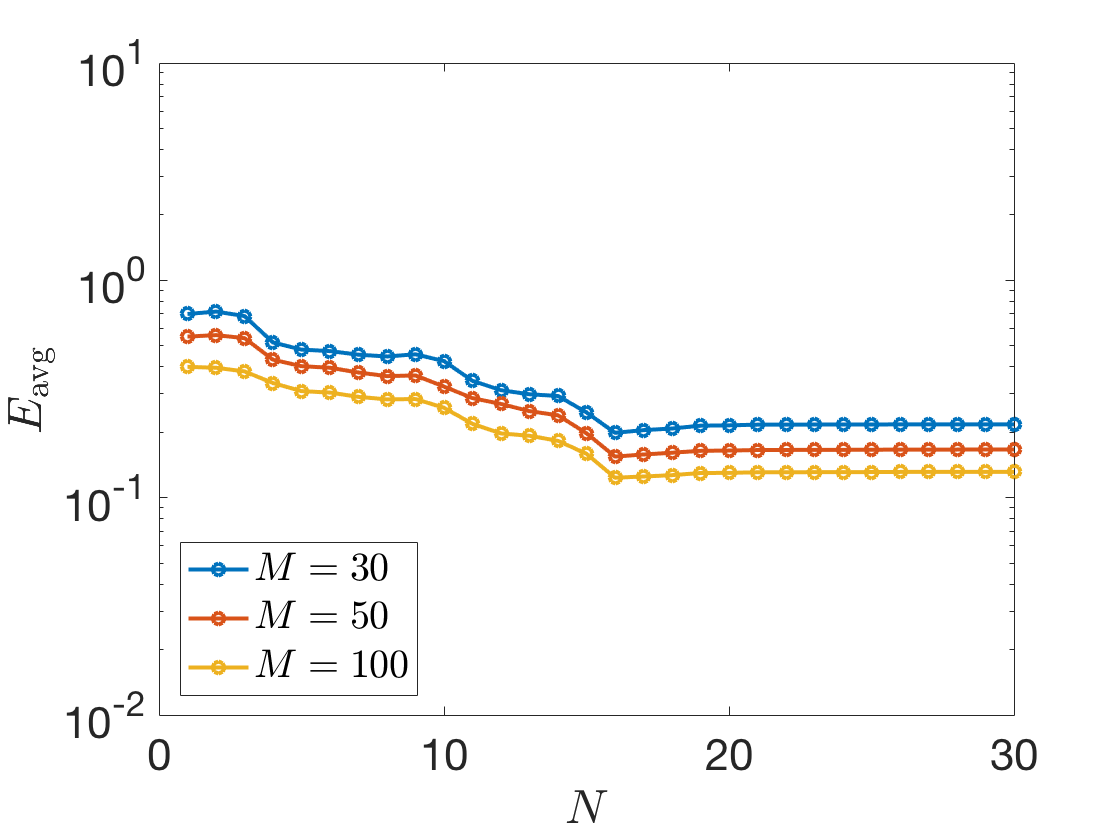}}  

  \subfloat[random,  lin., ${\rm SNR}=3$] {\includegraphics[width=0.3\textwidth]
 {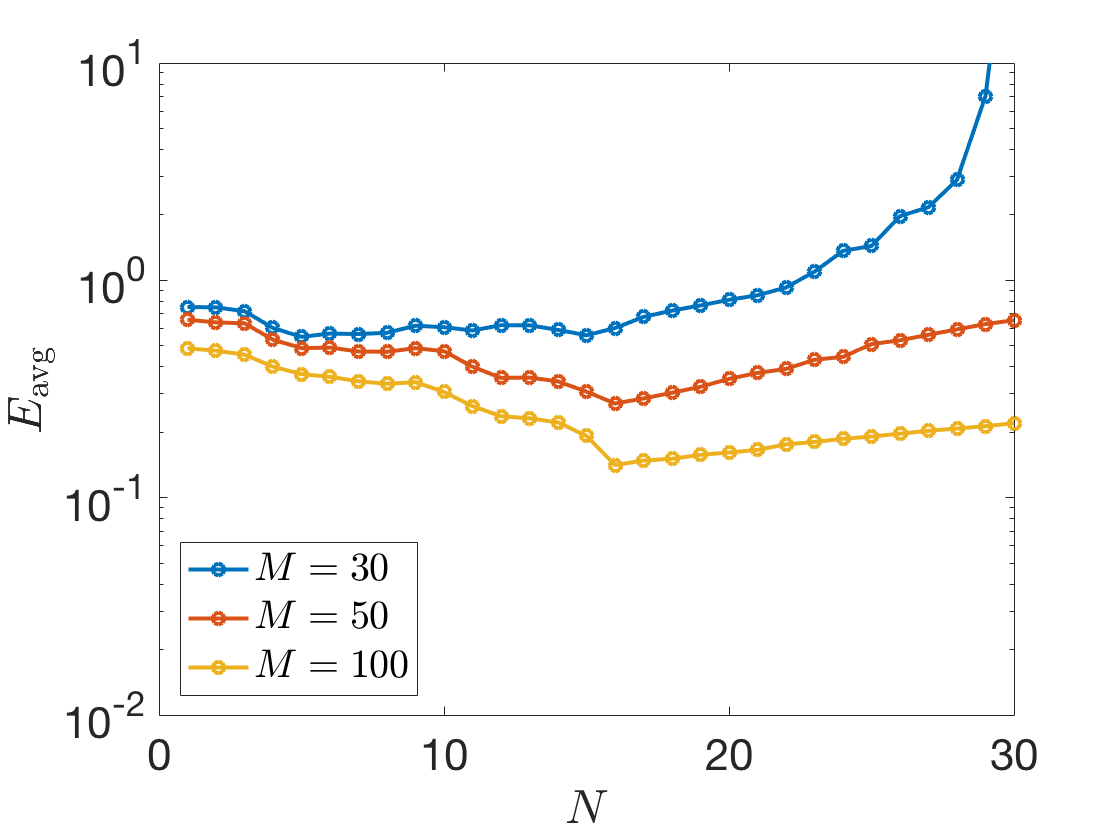}}  
  ~  
  \subfloat[random,  nonlin., ${\rm SNR}=3$] {\includegraphics[width=0.3\textwidth]
 {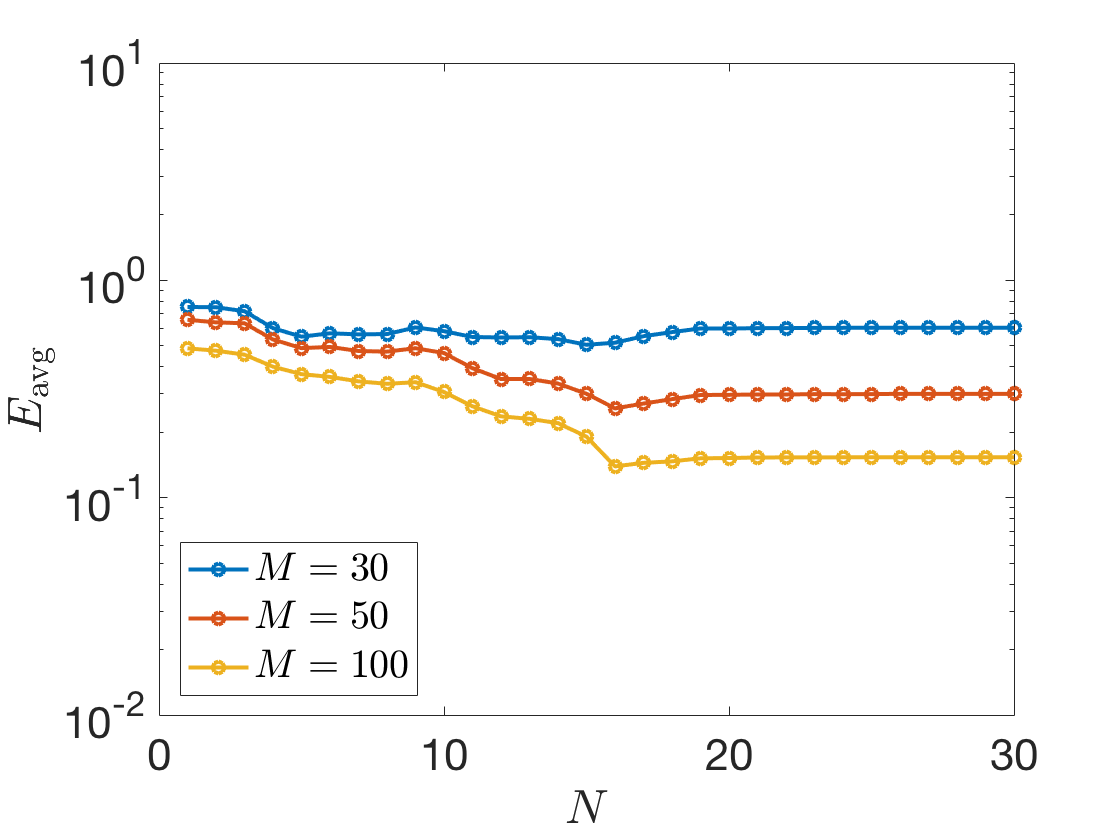}}  
 
\caption{three-dimensional problem.
Behavior of $E_{\rm avg}$ with $N$ for several values of $M$, for linear  and nonlinear PBDW  and two noise levels (unbiased case).
}
 \label{fig:Nadapt_3D_unbiased}
\end{figure}

\begin{figure}[h!]
\centering
\subfloat[Greedy,  lin., ${\rm SNR} = \infty$] {\includegraphics[width=0.3\textwidth]
 {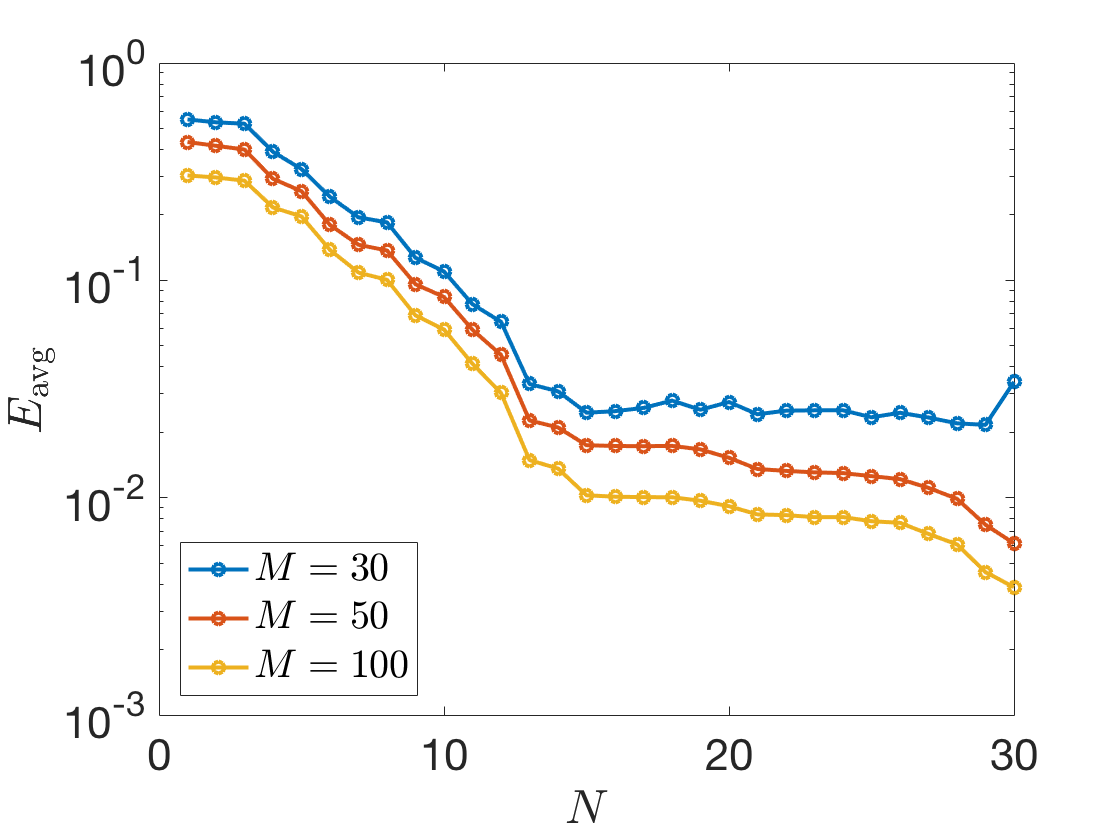}}  
  ~  
  \subfloat[Greedy,  nonlin., ${\rm SNR} = \infty$] {\includegraphics[width=0.3\textwidth]
 {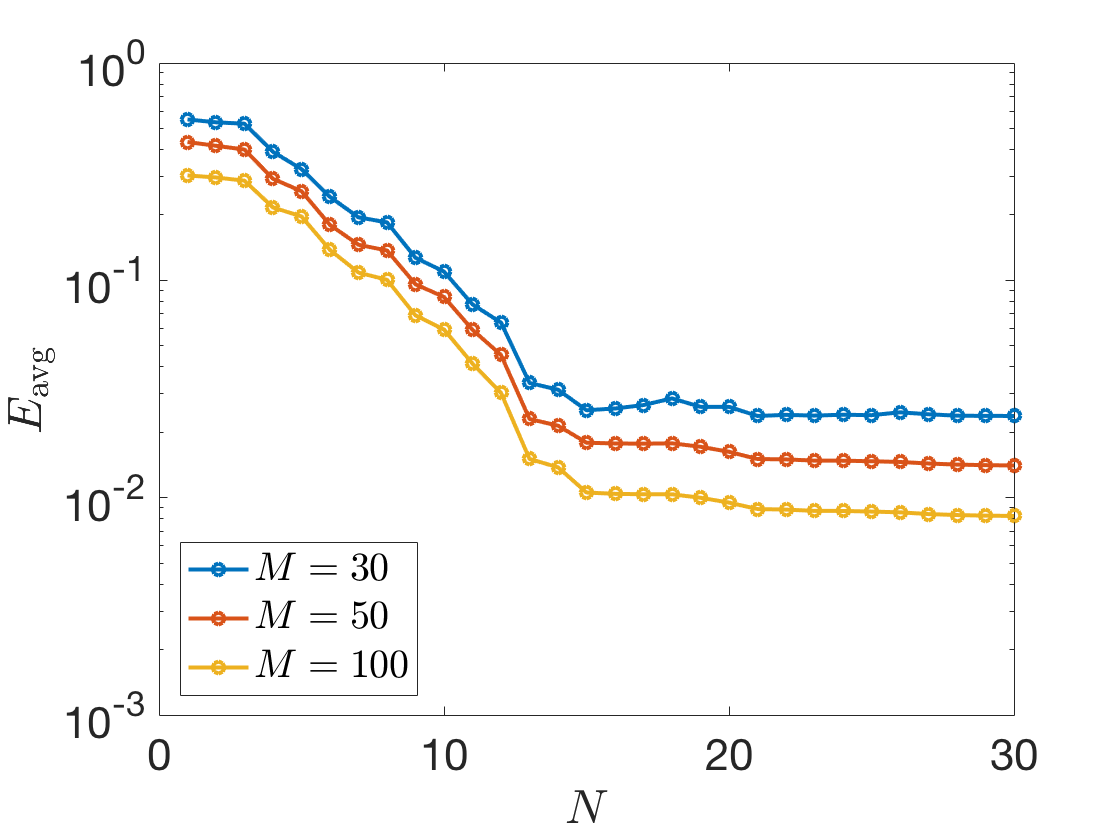}}  

  \subfloat[random,  lin., ${\rm SNR} = \infty$] {\includegraphics[width=0.3\textwidth]
 {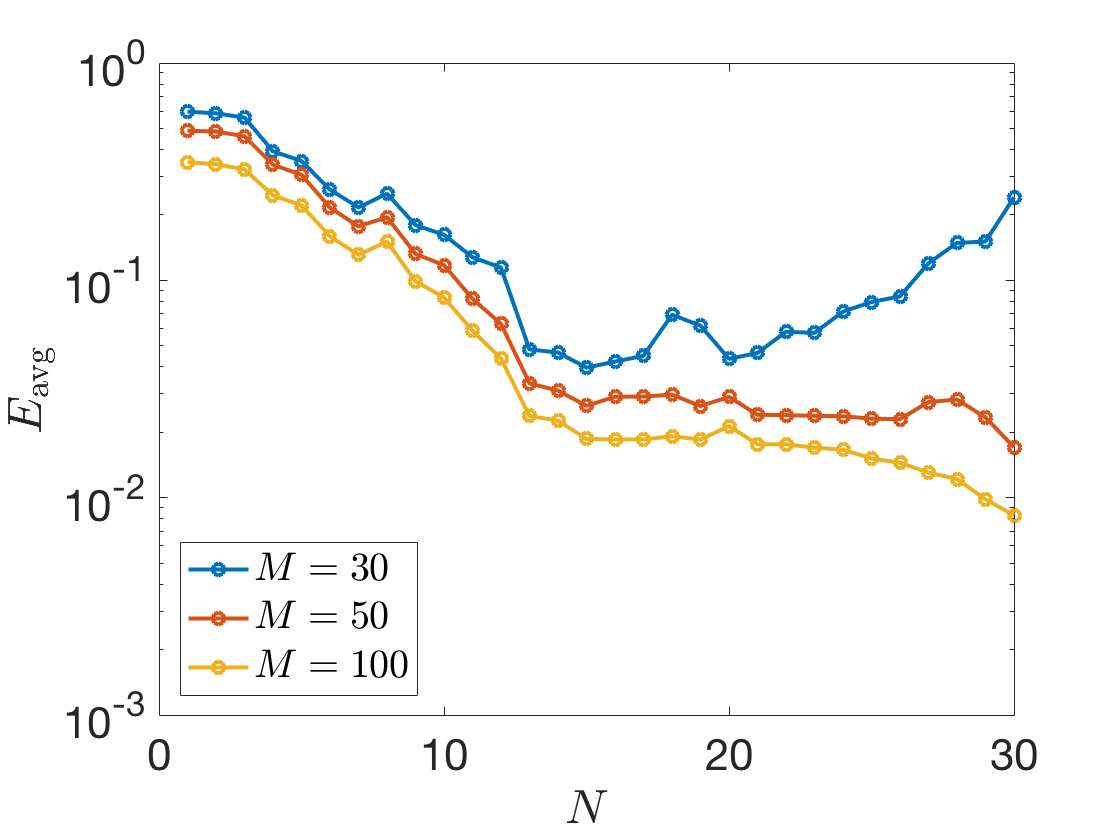}}  
  ~  
  \subfloat[random,  nonlin., ${\rm SNR} = \infty$] {\includegraphics[width=0.3\textwidth]
 {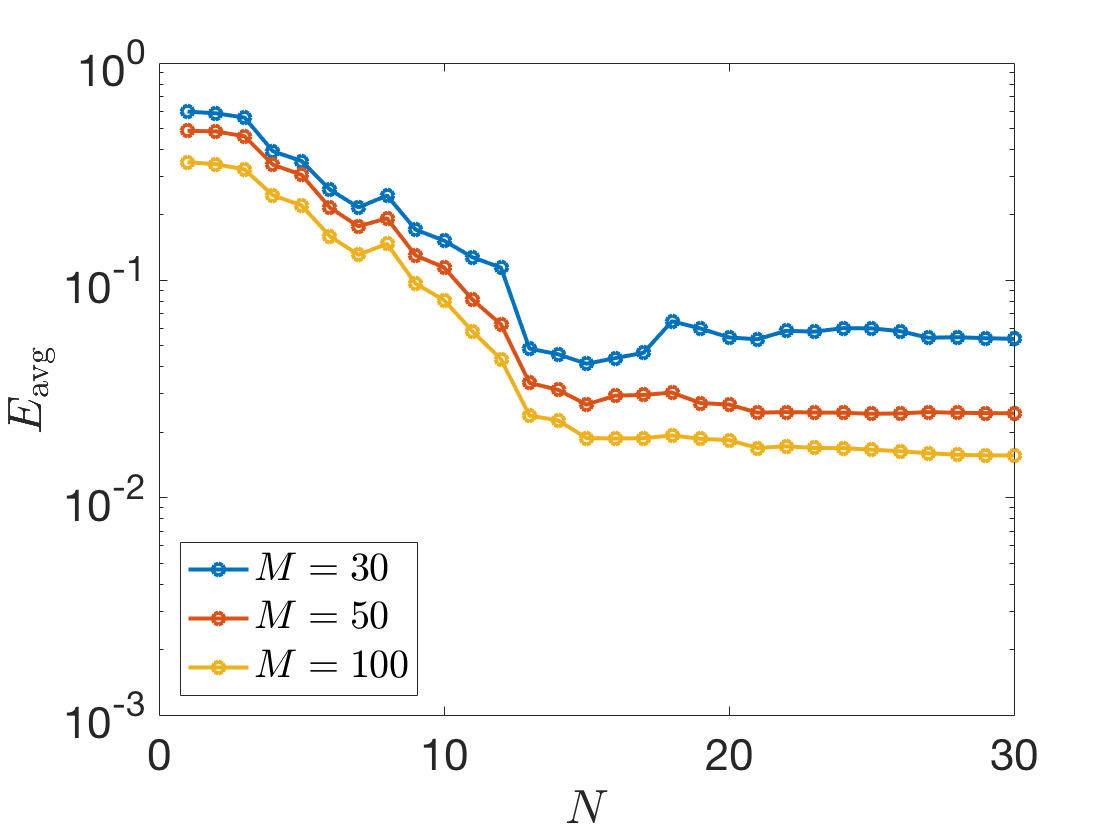}}  
 
\subfloat[Greedy,  lin., ${\rm SNR}=3$] {\includegraphics[width=0.3\textwidth]
 {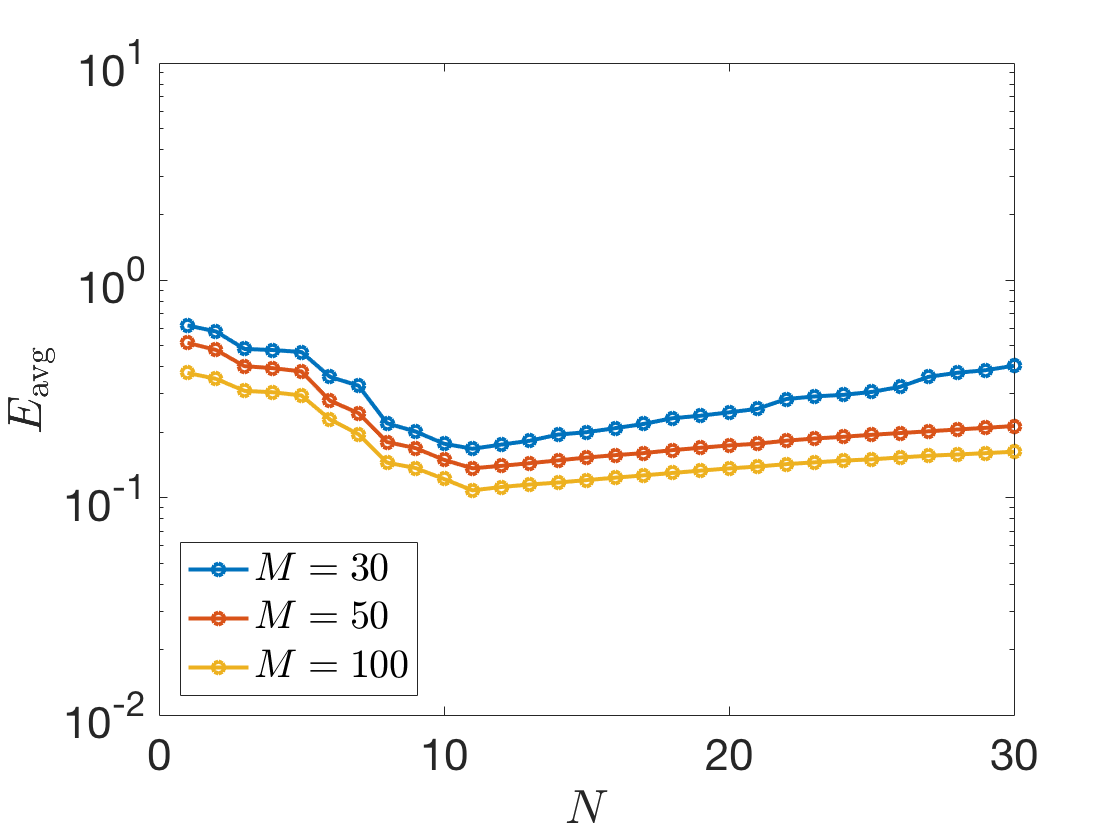}}  
  ~  
  \subfloat[Greedy,  nonlin., ${\rm SNR}=3$] {\includegraphics[width=0.3\textwidth]
 {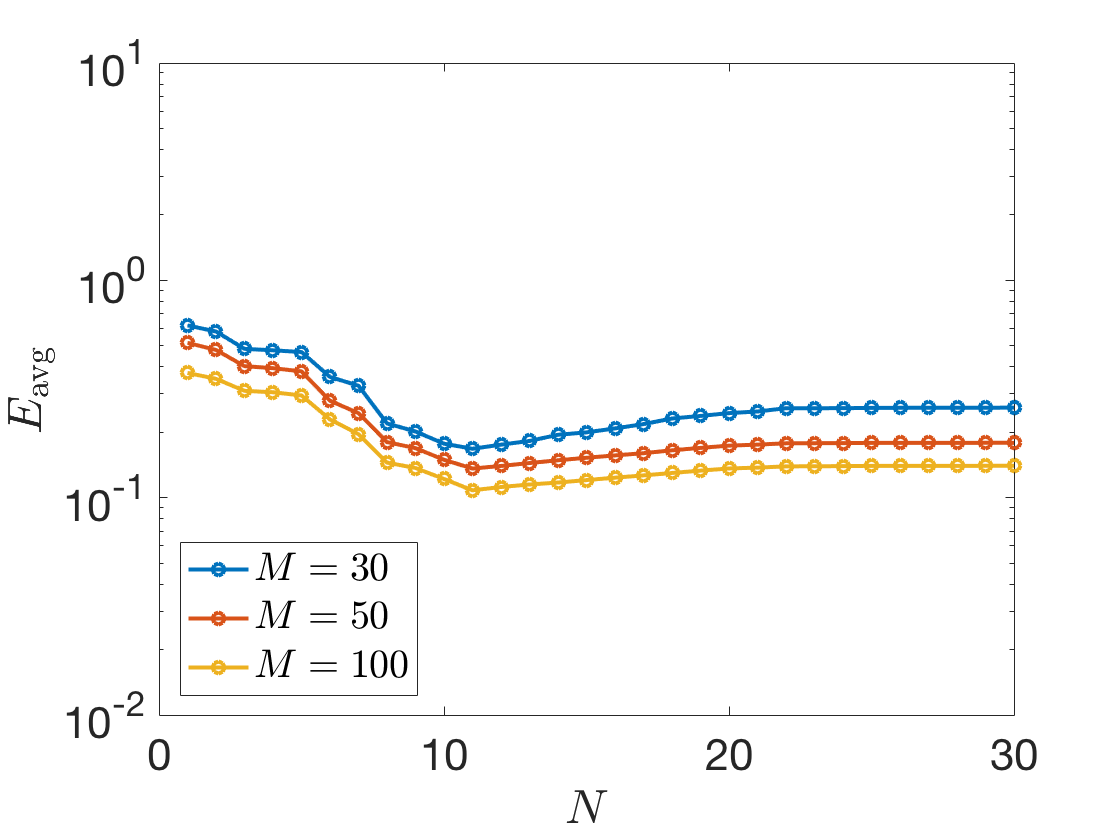}}  

  \subfloat[random,  lin., ${\rm SNR}=3$] {\includegraphics[width=0.3\textwidth]
 {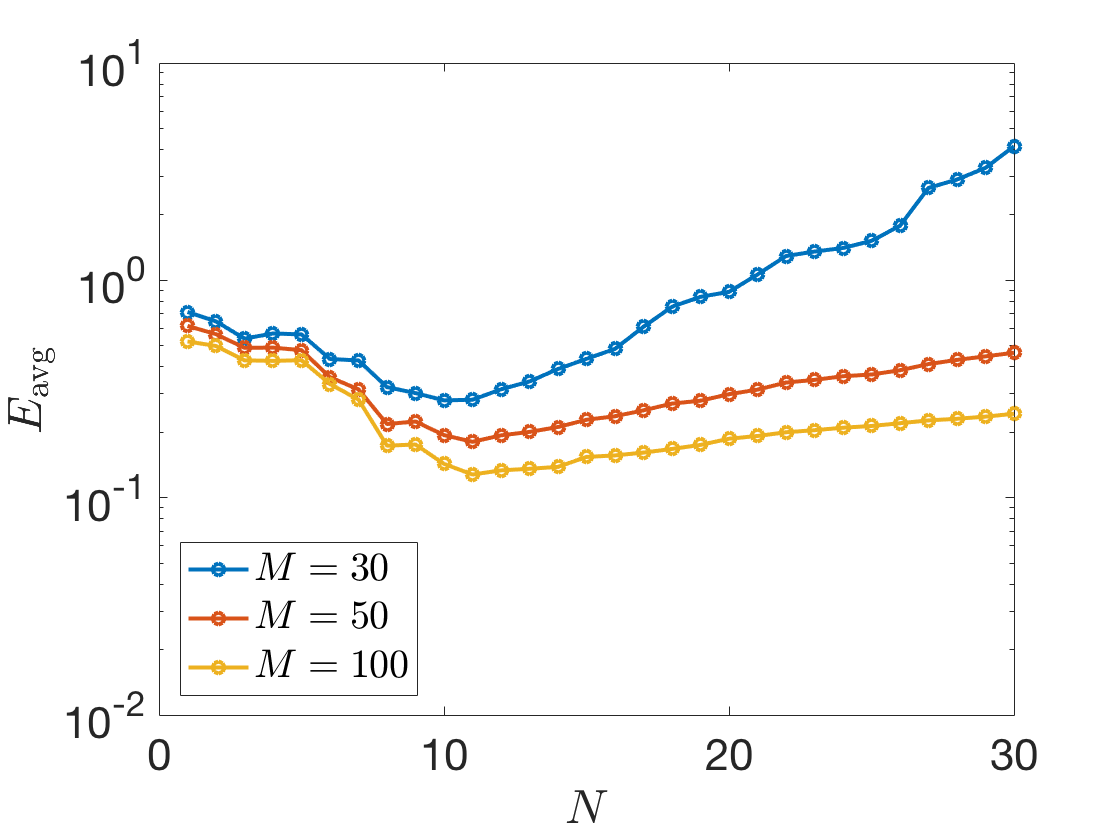}}  
  ~  
  \subfloat[random,  nonlin., ${\rm SNR}=3$] {\includegraphics[width=0.3\textwidth]
 {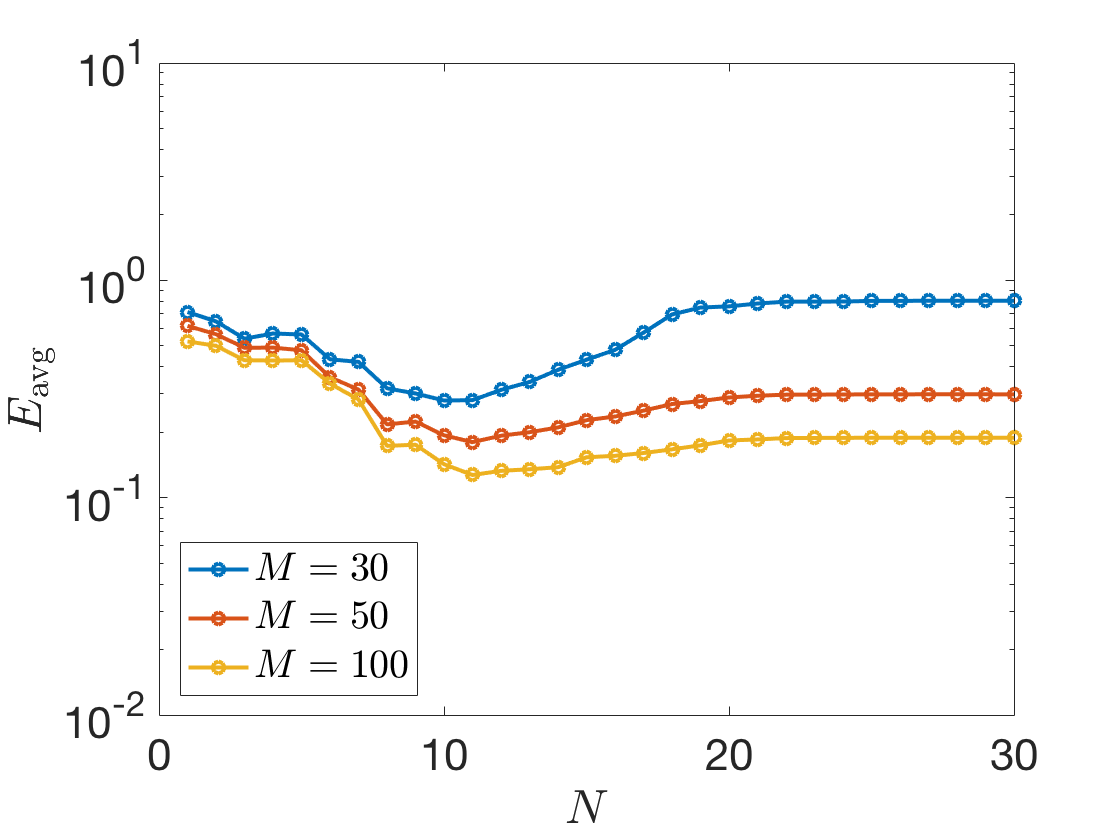}}  
\caption{three-dimensional problem.
Behavior of $E_{\rm avg}$ with $N$ for several values of $M$, for linear  and nonlinear PBDW  and two noise levels, (biased case).
}
 \label{fig:Nadapt_3D_biased}
\end{figure}

\section{Conclusions}
\label{sec:conclusions}
In this paper, we provided theoretical and empirical investigations of the performance of the PBDW approach.
First, we presented a mathematical analysis of the PBDW formulation.
For the linear case, we generalized the analysis in \cite{berger2017sampling} to obtain a complete \emph{a priori} error analysis for noisy measurements, and we also presented  two optimality results that motivate the approach.
For the nonlinear case, we showed a stability estimate that exploits a well-known result, first appeared in the inverse problem  literature.
The latter estimate suggests that the nonlinear formulation should be more robust to measurement error.
Second, we provided several numerical examples to compare the performance of linear PBDW with the performance of  nonlinear PBDW.

Results suggest that the box constraints  for the entries of the background vector $\hat{\mathbf{z}}_{\xi}$ improve the accuracy of the recovery algorithm, provided that measurements are polluted by a non-negligible disturbance. As regards the choice of the observation centers, the SGreedy method stabilizes the recovery algorithm for $N \approx M$ and $\Phi_N = \mathbb{R}^N$. On the other hand, at least for the numerical example considered in this work, in presence of box constraints, SGreedy does not lead to substantial improvements.
Finally, we also empirically found that the nonlinear formulation  is significantly less sensitive to the choice of $N$, particularly for noisy measurements.

%\newpage
\appendix
\section{Proof of Proposition \ref{th:representer_theorem}}
\label{sec:proof_long}

\begin{proof}
Given $\eta \in \mathcal{U}$, recalling the definition of $\mathcal{U}_M$, the Riesz theorem and the projection theorem, we find that
$\mathcal{J}_{\xi}(\mathbf{z}, \eta) = \mathcal{J}_{\xi}(\mathbf{z}, \Pi_{\mathcal{U}_M} \eta   ) + \xi \| \Pi_{\mathcal{U}_M^{\perp}} \eta  \|^2$: therefore, the optimal update $\hat{\eta}_{\xi}$ belongs to  $\mathcal{U}_M$.
Using a similar argument, we can also 
prove that $\hat{\eta}_{0} \in \mathcal{U}_M$.

Proof of \eqref{eq:two_stage_regularized_b} is straightforward and is here omitted; we now focus on \eqref{eq:two_stage_regularized_a}.
Towards this end, 
we  introduce the eigendecomposition $\mathbf{K}= \mathbf{U} \mathbf{D} \mathbf{U}^T$,
$\mathbf{D} = {\rm diag} \big(\lambda_1,\ldots,$ $\lambda_M  \big)$ and we observe that 
$\mathbf{W}_{\xi} = (\xi \mathbf{Id} + \mathbf{K})^{-1} $ satisfies
$\mathbf{W}_{\xi}  =\mathbf{U} \mathbf{D}_{\xi} \mathbf{U}^T$ 
with $\left( \mathbf{D}_{\xi} \right)_{m,m} = \frac{1}{\xi + \lambda_m(\mathbf{K})}$.
If we fix $\mathbf{z} \in \mathbb{R}^N$, it is easy to verify that the unique minimizer of $\mathcal{J}_{\xi, \mathbf{z}}(\cdot) = \mathcal{J}_{\xi}(\mathbf{z}, \cdot)$ is given by
$$
\eta(\mathbf{z}) = \sum_{m=1}^M \, \left( \boldsymbol{\eta}(\mathbf{z}) \right)_m q_m,
\quad
{\rm where} \; 
\boldsymbol{\eta}(\mathbf{z}) = \mathbf{W}_{\xi} \mathbf{y}^{\rm err}(\mathbf{z}).
$$
If we substitute the expression of $\eta$ in \eqref{eq:pbdw}, and we exploit 
the eigendecomposition of $\mathbf{K}$ and $\mathbf{W}_{\xi}$, 
we find
$$
\begin{array}{rl}
\displaystyle{
\mathcal{J}_{\xi}(\mathbf{z}, \eta(\mathbf{z})) =} &
\displaystyle{
\left(
\mathbf{y}^{\rm err}(\mathbf{z})
\right)^T
\left(
\xi \mathbf{W}_{\xi} \mathbf{K} \mathbf{W}_{\xi}
+
\left(
\mathbf{K} \mathbf{W}_{\xi}  - \mathbf{Id}
\right)^T
\left(
\mathbf{K} \mathbf{W}_{\xi}  - \mathbf{Id}
\right)
\right) \,
\mathbf{y}^{\rm err}(\mathbf{z})
} 
\\[3mm]
=
&
\displaystyle{
\left(
\mathbf{y}^{\rm err}(\mathbf{z})
\right)^T
\left(
\mathbf{Id} - \mathbf{K}  \mathbf{W}_{\xi} 
\right) \,
\mathbf{y}^{\rm err}(\mathbf{z})
\,= \,
\left(
\mathbf{y}^{\rm err}(\mathbf{z})
\right)^T
\left(
\xi \mathbf{W}_{\xi} 
\right) \,
\mathbf{y}^{\rm err}(\mathbf{z})
} 
\\
\end{array}
$$
which implies  \eqref{eq:two_stage_regularized_a}. Proof of \eqref{eq:two_stage_xi0} follows the exact same ideas and is here  omitted.

We now  prove \eqref{eq:bounds_limits}. Given $\mathbf{x} \in \mathbb{R}^M$,  exploiting the eigendecomposition of $\mathbf{W}_{\xi}$ we find
$$
\frac{1}{ \xi + \lambda_{\rm max}(\mathbf{K})   }
\| \mathbf{x} \|_2^2 
\leq
\| \mathbf{x} \|_{ \mathbf{W}_{\xi}}^2
\leq
\frac{1}{ \xi + \lambda_{\rm min}(\mathbf{K})   }
\| \mathbf{x} \|_2^2.
$$
By exploiting the latter, we obtain
$$
\begin{array}{rl}
\displaystyle{
\min_{\mathbf{z} \in \Phi_N} \, \| \mathbf{L} \mathbf{z} - \mathbf{y} \|_2^2
\geq } &
\displaystyle{
\left( \xi+ \lambda_{\rm min}(\mathbf{K}) \right)
\min_{\mathbf{z} \in \Phi_N} \, \| \mathbf{L} \mathbf{z} - \mathbf{y} \|_{ \mathbf{W}_{\xi}}^2
}
\\[3mm]
= &
\displaystyle{
\left( \xi+ \lambda_{\rm min}(\mathbf{K}) \right)
 \| \mathbf{L} \hat{\mathbf{z}}_{\xi} - \mathbf{y} \|_{ \mathbf{W}_{\xi}}^2 \geq 
\frac{\xi+ \lambda_{\rm min}(\mathbf{K}) }{ \xi + \lambda_{\rm max}(\mathbf{K})  }
 \| \mathbf{L} \hat{\mathbf{z}}_{\xi} - \mathbf{y} \|_2^2,} \\
\end{array}
$$
which is \eqref{eq:bounds_limits_a}.
By observing that 
the generalized eigenvalues of 
$\mathbf{W}_{\xi}^{1/2} $ $ \boldsymbol{\phi}_m = \lambda_m^{\rm gen} $ $\mathbf{K}^{-1/2}  \boldsymbol{\phi}_m$ are given by 
$\lambda_m^{\rm gen} = \frac{\sqrt{   \lambda_m(\mathbf{K})       }}{\sqrt{\xi +  \lambda_m(\mathbf{K})}}$ for $m=1,\ldots,M$, we obtain
$$
\frac{\lambda_{\rm min}(\mathbf{K})}{ \xi + \lambda_{\rm min}(\mathbf{K})   }
\| \mathbf{x} \|_{\mathbf{K}^{-1}}^2
\leq
\| \mathbf{x} \|_{\mathbf{W}_{\xi}}^2
\leq
\frac{\lambda_{\rm max}(\mathbf{K})}{ \xi + \lambda_{\rm max}(\mathbf{K})   }
\| \mathbf{x} \|_{\mathbf{K}^{-1}}^2, 
$$
and finally
$$
\begin{array}{l}
\displaystyle{
\min_{\mathbf{z} \in \Phi_N} \, \| \mathbf{L} \mathbf{z} - \mathbf{y} \|_{\mathbf{K}^{-1}}^2
\geq
\frac{\xi+ \lambda_{\rm max}(\mathbf{K}) }{ \lambda_{\rm max}(\mathbf{K})  }
\min_{\mathbf{z} \in \Phi_N} \, \| \mathbf{L} \mathbf{z} - \mathbf{y} \|_{  \mathbf{W}_{\xi}}^2
}
\\[3mm]
\hspace{0.3in}
\displaystyle{
=
\frac{\xi+ \lambda_{\rm max}(\mathbf{K}) }{ \lambda_{\rm max}(\mathbf{K})  }
 \| \mathbf{L} \hat{\mathbf{z}}_{\xi} - \mathbf{y} \|_{ 
 \mathbf{W}_{\xi}}^2
\geq
\frac{\lambda_{\rm min}(\mathbf{K}) }{\lambda_{\rm max}(\mathbf{K})  }
\,
\left(
\frac{\xi+ \lambda_{\rm max}(\mathbf{K}) }{ \xi + \lambda_{\rm min}(\mathbf{K})  }
\right)
\,
 \| \mathbf{L} \hat{\mathbf{z}}_{\xi} - \mathbf{y} \|_{\mathbf{K}^{-1}}^2,
}
\end{array}
$$
which is \eqref{eq:bounds_limits_b}.

Since $(\hat{\mathbf{z}}_{\xi},  \hat{\eta}_{\xi})$ minimizes \eqref{eq:pbdw} over all $(\mathbf{z}, \eta) \in \Phi_N \times \mathcal{U}$, we have
$$
\begin{array}{l}
\displaystyle{
\xi \|\hat{\eta}_{\xi}   \|^2 \leq J_{\xi}( \hat{\mathbf{z}}_{\xi},  \hat{\eta}_{\xi}  )
\leq
J_{\xi}(  \hat{\mathbf{z}}_{\infty},  0  ) =
 \min_{\mathbf{z} \in \Phi_N} \, \|  \mathbf{L} \mathbf{z} - \mathbf{y} \|_2^2;
}
\\[3mm]
\displaystyle{
\|  \boldsymbol{\ell}^o(\hat{\eta}_{\xi} ) + \mathbf{L} \hat{\mathbf{z}}_{\xi} - \mathbf{y}        \|_2^2
 \leq J_{\xi}( \hat{\mathbf{z}}_{\xi},  \hat{\eta}_{\xi}  )
\leq
J_{\xi}( \hat{\mathbf{z}}_{0},  \hat{\eta}_{0}  )
=
\xi \| \hat{\eta}_{0} \|^2,
}
\\
\end{array}
$$
which is \eqref{eq:bounds_limits_2}.

%Exploiting the algebraic formulation of the PBDW statement and well-known results in convex analysis, 
The  fourth and  fifth statements  follow directly from the algebraic formulation of the PBDW statement, and 
from well-known results in convex optimization: we omit the details.
\end{proof}

\section{Proof of Proposition \ref{th:optimality_gauss_markov} }
\label{sec:appendix_gauss_markov}

We state upfront that the proof follows the same idea of the well-known Gauss-Markov theorem (\cite{aitken_1936}) for linear unbiased estimators.

\begin{proof}
Without loss of generality, we assume that $ \{ \zeta_n \}_{n=1}^N$ is orthonormal; then,  we denote by $\hat{\mathbf{z}}_{\xi} (\mathbf{y})$ the vector of coefficients associated with the basis $ \{ \zeta_n \}_{n=1}^N$ and the solution to the PBDW statement for $\xi \in \{0, \infty\}$,  and we denote by ${\mathbf{z}}_{A} (\mathbf{y})$ the vector of coefficients associated with the algorithm $A$. 

We first prove that $\Lambda_2(A) \geq \Lambda_2(A^{\rm pbdw, \xi = \infty})$.
Since both PBDW and $A$ are linear,
recalling the definition of $\mathbf{L}$, $\mathbf{L}_{m,n} = \ell_m(\zeta_n)$, 
 we have that
$$
{\mathbf{z}}_{A} (\mathbf{y}) =
\hat{\mathbf{z}}_{\infty} (\mathbf{y})
\, + \, \mathbf{D} \, \mathbf{y},
\qquad
\hat{\mathbf{z}}_{\infty} (\mathbf{y})
= 
\left( \mathbf{L}^T \mathbf{L} \right)^{-1} \mathbf{L}^T \, \mathbf{y},
$$
for a proper choice of  $\mathbf{D} \in \mathbb{R}^{N,M}$.  Since 
$\Lambda_{\mathcal{U}}^{\rm bias}(A) = 0$, we must have
${\mathbf{z}}_{A} (\mathbf{L} \mathbf{z}) = \mathbf{z} $ for all $\mathbf{z} \in \mathbb{R}^N$; this implies that
$$
{\mathbf{z}}_{A} (\mathbf{L} \mathbf{z}) =
\mathbf{z} \, + \, \mathbf{D} \, \mathbf{L} \mathbf{z} \, 	\Rightarrow \, 
\mathbf{D} \, \mathbf{L} = 0.
$$
Recalling \eqref{eq:algebraic_lebesgue}, we shall prove that
$$
s_{\rm max} 
\left(
\left( \mathbf{L}^T  \, \mathbf{L}  \right)^{-1} \mathbf{L}^T  \, + \, \mathbf{D}
\right)
\geq
s_{\rm max} 
\left(
\left( \mathbf{L}^T  \, \mathbf{L}  \right)^{-1} \mathbf{L}^T 
\right).
$$
Towards this end,  we observe that\footnote{
We recall that the maximum singular value of a matrix $\mathbf{B}$ is the square root of the maximum eigenvalue of $\mathbf{B}^T \mathbf{B}$ or equivalently of 
$\mathbf{B}\mathbf{B}^T $.
}
$$
\begin{array}{l}
\displaystyle{
\left( \Lambda_{2}(A)  \right)^2   \, = \, 
 \sup_{\mathbf{y} \in \mathbb{R}^M} \, \frac{\big \|
 \left(   \left(  
 \mathbf{L}^T  \, \mathbf{L}  \right)^{-1} \mathbf{L}^T  \, + \, \mathbf{D} 
 \right)  \mathbf{y} \big\|_2^2}{\|\mathbf{y} \|_2^2} 
=
  \sup_{\mathbf{z} \in \mathbb{R}^N} \, \frac{
  \big \|
   \left(  \mathbf{L}  \left(    \mathbf{L}^T  \, \mathbf{L}  \right)^{-1}   \, + \, \mathbf{D}^T   \right)  \mathbf{z} \|_2^2}{\|\mathbf{z} \|_2^2} 
 }
\\[3mm]
\hspace{0.6in}
 =
\displaystyle{
 \sup_{\mathbf{z} \in \mathbb{R}^N} \, 
 \frac{ \mathbf{z}^T
\left(
\left(  \mathbf{L}^T  \, \mathbf{L}   \right)^{-1}  
\, + \, 
\left(  \mathbf{L}^T  \, \mathbf{L}   \right)^{-1}  \left(\mathbf{D}  \mathbf{L}    \right)^T
\, + \,
\mathbf{D}  \mathbf{L}  \, \left(  \mathbf{L}^T  \, \mathbf{L}   \right)^{-1}
\, + \,
\mathbf{D} \mathbf{D}^T
 \right)
 \mathbf{z}
 }
{\|\mathbf{z}\|_2^2}
}
\\[3mm]
\hspace{0.6in} 
 =
 \displaystyle{
 \sup_{\mathbf{z} \in \mathbb{R}^N} \, 
 \frac{ \mathbf{z}^T
\left(
\left(  \mathbf{L}^T  \, \mathbf{L}   \right)^{-1}  
\, + \,
\mathbf{D} \mathbf{D}^T
 \right)
 \mathbf{z}
 }
{\|\mathbf{z}\|_2^2}
\geq
 \sup_{\mathbf{z} \in \mathbb{R}^N} \, 
 \frac{ \mathbf{z}^T
\left(
\left(  \mathbf{L}^T  \, \mathbf{L}   \right)^{-1}  
 \right)
 \mathbf{z}
 }
{\|\mathbf{z}\|_2^2}
} \\[3mm]
\hspace{0.6in}
\displaystyle{
= \,
\left(
\Lambda_2(A^{\rm pbdw, \xi = \infty})
\right)^2 },
\end{array}
$$
which is the thesis.
Note that in the second-to-last step we used the fact that $\mathbf{D} \mathbf{D}^T$ is semi-positive definite.

We now prove that $\Lambda_{\mathcal{U}}(A) \geq \Lambda_{\mathcal{U}}(
\Pi_{\mathcal{Z}_N}
A^{\rm pbdw, \xi = 0})$. As for the previous case, we observe that
$$
{\mathbf{z}}_{A} (\mathbf{y}) =
\hat{\mathbf{z}}_{0} (\mathbf{y})
\, + \, \mathbf{E} \, \mathbf{y},
\qquad
\hat{\mathbf{z}}_{0} (\mathbf{y})
= 
\left( \mathbf{L}^T \mathbf{K}^{-1} \mathbf{L} \right)^{-1} \mathbf{L}^T \, \mathbf{K}^{-1} \, \mathbf{y},
$$
where the matrix $\mathbf{E} \in \mathbb{R}^{N,M}$  should satisfy
 $\mathbf{E} \mathbf{L} = 0$.
Exploiting Lemma \ref{th:properties_lebesgue_constants}, we find the desidered result:
$$
\begin{array}{rl}
\Lambda_{\mathcal{U}}(A)
= &
\displaystyle{
s_{\rm max} (\mathbf{A} \mathbf{K}^{1/2}) 
=
\sup_{\mathbf{z} \in \mathbb{R}^N} \,
\frac{
\big\| 
\mathbf{K}^{1/2} \left(
\mathbf{K}^{-1}
\mathbf{L} \left(\mathbf{L}^T \mathbf{K}^{-1} \mathbf{L}   \right)^{-1}
\,  +  \, \mathbf{E}^T
\right)
\mathbf{z}
   \big\|_2}{\| \mathbf{z} \|_2}
}
\\[3mm]
= &
\displaystyle{
\sup_{\mathbf{z} \in \mathbb{R}^N} \,
\frac{
\sqrt{ 
\mathbf{z}^T
\left(
\left(
\mathbf{L}^T \mathbf{K}^{-1} \mathbf{L}
\right)^{-1}
\, + \,
\mathbf{E} \mathbf{K} \mathbf{E}^T 
\right)
\mathbf{z}
 }
  }{\| \mathbf{z} \|_2}
\, \geq \,
\sup_{\mathbf{z} \in \mathbb{R}^N} \,
\frac{
\sqrt{ 
\mathbf{z}^T
\left( \mathbf{L}^T \mathbf{K}^{-1} \mathbf{L} \right)^{-1}
\, 
\mathbf{z}
 }
  }{\| \mathbf{z} \|_2}
}
\\[3mm]
= &
\displaystyle{
\Lambda_{\mathcal{U}}
\left( 
 \Pi_{\mathcal{Z}_N}A^{\rm pbdw, \xi = 0} \right).
}
\\[3mm]
\end{array}
$$
\end{proof}

\bibliographystyle{plain}
\bibliography{all_refs}
 
\end{document}